\documentclass[12pt,a4paper]{article}

\usepackage[utf8]{inputenc}
\usepackage[T1]{fontenc}
\usepackage[a4paper,margin=1in]{geometry}
\usepackage{amsmath,amssymb,amsthm,amscd,mathrsfs,mathtools}
\usepackage{tikz-cd}
\usetikzlibrary{calc,babel}
\usepackage{stmaryrd}
\usepackage{url}
\usepackage[colorlinks=true,linkcolor=blue,citecolor=blue,urlcolor=blue]{hyperref}
\usepackage{faktor}

\DeclareMathOperator{\sgn}{sgn}
\DeclareMathOperator{\Tr}{Tr}

\DeclareMathOperator{\Hom}{Hom}

\DeclareMathOperator{\Gal}{Gal}

\DeclareMathOperator{\GL}{GL}
\DeclareMathOperator{\SL}{SL}

\DeclareMathOperator{\Frac}{Frac}
\DeclareMathOperator{\Sym}{Sym}
\DeclareMathOperator{\aug}{aug}
\DeclareMathOperator{\res}{res}
\DeclareMathOperator{\vol}{vol}
\DeclareMathOperator{\ev}{ev}
\DeclareMathOperator{\Eis}{Eis}
\DeclareMathOperator{\pol}{pol}
\DeclareMathOperator{\id}{id}

\DeclareMathOperator{\supp}{supp}
\DeclareMathOperator{\Avg}{Avg}
\DeclareMathOperator{\diag}{diag}

\numberwithin{equation}{section}

\theoremstyle{plain}
\newtheorem{theorem}[equation]{Theorem}
\newtheorem{lemma}[equation]{Lemma}
\newtheorem{corollary}[equation]{Corollary}
\newtheorem{proposition}[equation]{Proposition}

\theoremstyle{definition}
\newtheorem{definition}[equation]{Definition}

\theoremstyle{remark}
\newtheorem{remark}[equation]{Remark}

\title{Shintani Cocycle Decomposition of Topological Polylogarithms for Totally Real Fields}
\author{Julio de Mello Bezerra}
\date{October 21, 2026}

\begin{document}

\maketitle

\begin{abstract}
We compare two cohomological constructions computing special values of partial zeta functions of totally real number fields at nonpositive integers: the Shintani cocycles arising from polyhedral cone decompositions, and the Eisenstein classes obtained from the topological polylogarithm. In particular we prove that the specialization of the topological polylogarithm at a nonzero torsion point coincides in cohomology with the Shintani class up to an explicit sign and error term, and obtain Siegel and Klingen's classical rationality Theorem.
\end{abstract}

\tableofcontents

\section{Introduction}\label{sec:intro}

\subsection{Background and context}

The study of special values of zeta and $L$-functions at nonpositive integers is a central theme in algebraic number theory, originating in Euler's discovery that the Riemann zeta function takes rational values $\zeta(-k) = (-1)^k \frac{B_{k+1}}{k+1} \in \mathbb{Q}$ ($k \geq 0$) given by the Bernoulli numbers. For a totally real number field $F$ of degree $n = [F:\mathbb{Q}]$, Siegel and Klingen \cite{Sie70} proved that the partial zeta functions $\zeta(\mathfrak{b}, \mathfrak{f}, s)$ associated to narrow ray classes of ideals take rational values at all nonpositive integers $s = -k$.

Over the past several decades, two distinct geometric and cohomological paradigms have been developed to study these special values and prove their rationality:

\begin{enumerate}
    \item \textit{Shintani's method and cone cocycles.} In \cite{Shi76}, Shintani proved that the action of the group of totally positive units $\Gamma \subset \mathcal{O}_F^{\times,+}$ on the totally positive orthant $V_{>0} = (F \otimes_{\mathbb{Q}} \mathbb{R})_{>0} \cong \mathbb{R}_{>0}^n$ admits a fundamental domain given by a finite union of rational polyhedral cones $\sigma_{\boldsymbol{\alpha}}$. Decomposing the lattice sum defining $\zeta(\mathfrak{b}, \mathfrak{f}, s)$ across these cones expresses the partial zeta function as a finite sum of cone zeta functions, whose generating series $\mathcal{G}_{\sigma_{\boldsymbol{\alpha}}}(\mathbf{t}, \mathbf{z})$ are explicit rational functions of exponentials $e^{\langle \alpha, \mathbf{z} \rangle}$. Although the choice of a Shintani cone decomposition $\Phi$ is non-canonical, the resulting cone generating series assemble into a canonical cohomology class $[\mathcal{G}]$ in the $(n-1)$-st group cohomology of $\Gamma$ (studied in various forms, in particular by Charollois--Dasgupta--Greenberg \cite{CDG15}, and Bannai--Hagihara--Yamada--Yamamoto \cite{BHY19}).
    \item \textit{Eisenstein cohomology and the topological polylogarithm.} Independently, Sczech \cite{Scz93} and Nori \cite{Nor95} constructed Eisenstein cohomology classes for $\GL_n(\mathbb{Z})$ using analytic trigonometric series and differential forms. Reinterpreting Nori's construction in the framework of sheaf theory, Beilinson, Kings, and Levin \cite{BKL18} (generalizing \cite{BL94}) introduced the \textit{logarithm sheaf} $\mathcal{L}\mathrm{og}$ on the real torus $T = V/L$ with coefficients in the completed group ring $R_A = \varprojlim_k A[L]/J^k$ for $A$ a Noetherian ring. In their approach, the \textit{topological polylogarithm} is characterized purely topologically by a residue condition on the punctured torus $\overset{\circ}{T} := T \setminus \{0\}$, and its specialization along a nonzero torsion section $\mathbf{t} \in \overset{\circ}{T}$ produces the Eisenstein class $\Eis(\mathbf{t})$ in the cohomology of $\Gamma$, avoiding any choice of cone decomposition.
\end{enumerate}

A natural question is to understand the precise relationship between the topological polylogarithm of Beilinson--Kings--Levin \cite{BKL18} and the Shintani generating class $[\mathcal{G}]$ at the cohomological level. It must also be noted that both approaches above also obtain integrality results and $p$-adic interpolation of the special zeta values. Our simplified framework in this article does not yield such integrality results, though modifications should be possible and may be considered in subsequent work. 

\subsection{Main results}

In this paper, we establish a direct cohomological and analytic comparison between the topological polylogarithm and the Shintani cocycle class, by changing the framework of \cite{BKL18} in a way which is compatible with Shintani decompositions. Our approach rests on three main ingredients:

\paragraph{1. Cone-adapted Laurent series and the extended logarithm sheaf.}
Because the individual Shintani cone generating functions $\mathcal{G}_{\sigma_{\boldsymbol{\alpha}}}(\mathbf{t}, \mathbf{z})$ have poles along the hyperplanes $\langle \alpha_j, \mathbf{z} \rangle = 0$ orthogonal to the generators of the cone $\sigma_{\boldsymbol{\alpha}}$, they do not lie in the power series ring $R_{\mathbb{K}} \cong \mathbb{K}\llbracket \mathbf{z} \rrbracket$ of \cite{BKL18}. To circumvent this issue, as well as the pathologies arising from working with the usual ring of multivariate Laurent series, we embed $R_{\mathbb{K}}$ into the field $\widetilde{R}_{\mathbb{K}} := \mathbb{K}_{\leq_{\mathrm{lex}}}(\!(\mathbf{z})\!)$ of formal Laurent series supported in cones compatible with the lexicographic order, which carries a natural diagonal action of $\Gamma$ and a norm projection $\rho_N: \widetilde{R}_{\mathbb{K}} \twoheadrightarrow (\widetilde{R}_{\mathbb{K}})_\Gamma \cong \mathbb{K}(\!(w)\!)$ onto $\Gamma$-coinvariants ($w = z_1 \cdots z_n$).

Extending scalars along $R_{\mathbb{K}} \hookrightarrow \widetilde{R}_{\mathbb{K}}$ defines the \textit{extended logarithm sheaf} $\widetilde{\mathcal{L}\mathrm{og}} := \mathcal{L}\mathrm{og} \otimes_{\underline{R}} \underline{\widetilde{R}}$ on $T$. Over this extended sheaf, the topological residue map on the punctured torus $\overset{\circ}{T} = T \setminus \{0\}$ is an isomorphism:
\[
    \res: H^{n-1}(\overset{\circ}{T}, \widetilde{\mathcal{L}\mathrm{og}}) \xrightarrow{\sim} \widetilde{\mathcal{L}\mathrm{og}}_0 = \widetilde{R}_{\mathbb{K}}.
\]
This allows us to define the topological polylogarithm class $\pol \in H^{n-1}(\overset{\circ}{T}, \Gamma; \widetilde{\mathcal{L}\mathrm{og}})$ directly by the residue condition $\res(\pol) = \delta_0 = 1$ (Definition \ref{def polylog}), without twisting by a degree-zero divisor $\alpha$ or by $L^*$. We must note that while this simplifies the theory, we do not obtain integrality and $p$-adic interpolation results as in \cite{BKL18} and \cite{CDG15}.

\paragraph{2. Explicit current representing the topological polylogarithm.}
To represent $\pol$ explicitly in de Rham cohomology, we first prove a general comparison theorem (Theorem \ref{residue currents} and Corollary \ref{cor:explicit_residue}) identifying the relative de Rham cocone $C(j^*\Omega_X^\bullet)$ of an open complement $j: U = X \setminus Z \hookrightarrow X$ with the complex $\Gamma_Z(X, \widehat{\Omega}_X^\bullet)$ of currents supported on $Z$ and establishing the explicit residue formula $\res([\theta]) = [d\mathcal{T}_\theta - \mathcal{T}_{d\theta}]$ for locally integrable forms.

Applying the above comparison to the totally positive unit sphere $X := \mathbb{S}^{n-1} \cap (\mathbb{R}_{>0})^n$, we follow a method due to Prader to explicitly construct the current $\widetilde{\mathscr{G}}$ in Proposition \ref{prop green current}, which satisfies the desired residue condition. In particular, $\widetilde{\mathscr{G}}$ directly represents the topological polylogarithm class $\pol$.

\paragraph{3. Simplicial evaluation and comparison with the Shintani class.}
Any Shintani cone decomposition $\Phi$ induces a $\Gamma$-equivariant triangulation $\Phi^X$ of $X$ into spherical simplices $X_{\boldsymbol{\alpha}} = \sigma_{\boldsymbol{\alpha}} \cap X$. After reinterpreting Hurwitz's classical integral formula (Proposition \ref{hurwitz}) as a Mellin transform, we integrate the specialization $\mathbf{t}^*\widetilde{\mathscr{G}}$ over each simplex $X_{\boldsymbol{\alpha}}$ and apply Poisson summation on each cone (Proposition \ref{prop: equality of distributions}) to obtain our main result:

\begin{theorem}[Theorem \ref{main theorem}]
Let $\mathbf{t} \in \overset{\circ}{T}$ be a nonzero torsion point of the punctured torus stabilized by $\Gamma$. Let $\Eis(\mathbf{t}) := \mathbf{t}^*\pol = [\mathbf{t}^*\widetilde{\mathscr{G}}] \in H^{n-1}(\Gamma, \widetilde{R}_\Gamma)$ be the Eisenstein class obtained by pulling back the topological polylogarithm $\pol$ along $\mathbf{t}$, and let $[\mathcal{G}] \in H^{n-1}(\Gamma, \widetilde{R}_\Gamma)$ be the Shintani class. Then in $H^{n-1}(\Gamma, \widetilde{R}_\Gamma)$ we have
\[
    \Eis(\mathbf{t}) = (-1)^{n-1}[\mathcal{G}].
\]
Pairing with the fundamental class, the evaluation map $\ev: H^{n-1}(\Gamma, \widetilde{R}) \to \widetilde{R}_\Gamma$ yields
\[
    \ev(\Eis(\mathbf{t}) + [\Psi_P]) = \ev\bigl((-1)^{n-1}[\mathcal{G}] + [\Psi_P]\bigr) = (-1)^{n-1} \sum_{k \geq 0} \zeta(\mathbf{t}, L, -k) \frac{w^k}{(k!)^n} \in \mathbb{Q}\llbracket w \rrbracket,
\]
where $[\Psi_P]$ is the explicit polar cocycle of \cite{CDG15} and \cite{Scz93}.  
\end{theorem}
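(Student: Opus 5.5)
The plan is to compute $\mathbf{t}^*\widetilde{\mathscr{G}}$ explicitly as a group cochain and compare it cone by cone with the Shintani cocycle. The starting point is Proposition~\ref{prop green current}, which supplies the current $\widetilde{\mathscr{G}}$ on $X=\mathbb{S}^{n-1}\cap(\mathbb{R}_{>0})^n$ representing $\pol$. It satisfies the residue condition $\res(\pol)=1$ via Corollary~\ref{cor:explicit_residue}. Pulling back along the $\Gamma$-stable torsion point $\mathbf{t}$ gives a $\Gamma$-equivariant current on $X$ with values in $\widetilde{R}_{\mathbb{K}}$.

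The first step converts this current into a class in $H^{n-1}(\Gamma,\widetilde{R})$. Since $X$ is contractible with a free proper $\Gamma$-action, the equivariant de Rham complex computes group cohomology. The $\Gamma$-equivariant triangulation $\Phi^X$ coming from a Shintani decomposition $\Phi$ provides an explicit comparison map. It sends an equivariant top-degree current $\omega$ to the cochain $(\gamma_1,\dots,\gamma_{n-1})\mapsto \int_{X_{\boldsymbol{\alpha}(\gamma)}}\omega$. Here $X_{\boldsymbol{\alpha}(\gamma)}$ is the spherical simplex spanned by $1,\gamma_1,\gamma_1\gamma_2,\dots$, signed as in the Shintani/CDG construction. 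This is a standard Čech--de Rham zigzag. Tracking orientations of $X$ against the cone orientations in this zigzag should produce the sign $(-1)^{n-1}$. I would fix the conventions at this stage so that the sign appears once.

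The second step is the simplex integral. I would write $\mathbf{t}^*\widetilde{\mathscr{G}}$ through the Mellin form of Hurwitz's formula (Proposition~\ref{hurwitz}), which expresses the specialized current as a lattice sum $\sum_{\ell}e^{2\pi i\langle \ell,\mathbf t\rangle}(\cdots)$ weighted by a radial kernel in $\mathbf z$. Integrating over $X_{\boldsymbol{\alpha}}$ and recombining with the radial variable turns the spherical integral into an integral over the full cone $\sigma_{\boldsymbol{\alpha}}$. Applying Proposition~\ref{prop: equality of distributions} (Poisson summation on each cone) then identifies the result with the cone generating series $\mathcal{G}_{\sigma_{\boldsymbol{\alpha}}}(\mathbf t,\mathbf z)$, viewed in $\widetilde{R}_{\mathbb{K}}$ through the lex-Laurent expansion. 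The expected discrepancy consists of boundary and lower-dimensional face contributions. These are either coboundaries or they cancel in the alternating sum.

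This gives $\Eis(\mathbf t)=(-1)^{n-1}[\mathcal G]$. For the second formula I would apply $\ev$, which is cap product with the fundamental class of $\Gamma\backslash V_{>0}$ followed by $\rho_N$. On $[\mathcal G]$ this sums the cone functions over a fundamental domain, giving the full generating series of the partial zeta function. Its coinvariant projection modulo the polar part is $\sum_k\zeta(\mathbf t,L,-k)w^k/(k!)^n$ by Shintani's evaluation. The polar terms, those not in $\mathbb{K}\llbracket w\rrbracket$, are exactly removed by adding $[\Psi_P]$, following CDG15 and Sczech. Rationality follows because each cone series has rational Taylor coefficients.

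I expect the main obstacle to be the second step: justifying the exchange of lattice sum and simplex integral for a current rather than a smooth form, and controlling the singular contributions on the boundary hyperplanes $\langle\alpha_j,\mathbf z\rangle=0$. That is where the lex-Laurent field $\widetilde{R}_{\mathbb{K}}$ has to do its work. I would first regularize by a factor $e^{-\epsilon|x|}$, then compute and pass to the limit coefficientwise in the lex order.
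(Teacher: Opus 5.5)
Your skeleton matches the paper's: integrate over the simplices of $\Phi^X$, evaluate each Fourier mode by Hurwitz, resum by Poisson summation on each cone, then sum over $\Phi_n/\Gamma$. However, the step that actually produces the sign is missing. The factor $(-1)^{n-1}$ is not an orientation artefact of the comparison map, and it cannot be arranged by ``fixing conventions'': the orientation of $X$ (via $J$), the signs $\sgn(\boldsymbol{\alpha})=\sgn\det A_{\boldsymbol{\alpha}}$ and the map $\ev$ are all fixed by the statement. The sign is intrinsic to the current. Since $\mathbf{t}^*dv_j=0$, pulling back along the constant section kills every component $\mathscr{G}^a_\mu$ with $a<n-1$. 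By Lemma~\ref{lem:fibre_integral_green}(2) the surviving component carries $(-1)^{n-1}(n-1)!$, coming from $(-u)^{n-1}$ in $\bigwedge_{j\neq i}(dv_j-u\,dy_j)$ together with the $u$-integral. This gives \eqref{eq:t_pullback_Gtilde}: $\mathbf{t}^*\widetilde{\mathscr{G}}$ is $(-1)^{n-1}\vol(L)^{-1}$ times a Fourier sum of precisely the Hurwitz integrands $(n-1)!\,\omega_y/\langle 2\pi i\mu-\mathbf{z},y\rangle^n$. Proposition~\ref{hurwitz} then turns each mode integrated over $X_{\boldsymbol{\alpha}}$ into $\sgn(\boldsymbol{\alpha})f(\boldsymbol{\alpha})(2\pi i\mu-\mathbf{z})$. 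Then \eqref{eq:poisson_cone} gives $\int_{X_{\boldsymbol{\alpha}}}\mathbf{t}^*\widetilde{\mathscr{G}}=(-1)^{n-1}\mathcal{G}_{\sigma_{\boldsymbol{\alpha}}}(\mathbf{t},\mathbf{z})$ exactly, at the cochain level. Your ``recombination with the radial variable'' is this computation in disguise; carried out honestly, that is where the sign appears, not in your first step. You also do not need an $e^{-\epsilon|x|}$ regularization to interchange sum and integral. The paper works for $\Re(z_j)\le -c<0$, where \eqref{eq:uniform_bound} controls the coefficients uniformly in $\mu$ and $y$.

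Second, your treatment of ``boundary and lower-dimensional face contributions'' (either coboundaries or cancelling) is unsupported. If $\mathbf{t}$ lies on an $L$-translate of a wall of $\sigma_{\boldsymbol{\alpha}}$, then \eqref{eq:poisson_cone} is not available pointwise at $\mathbf{t}$. Nothing in your argument then recovers the upper-closure convention built into $\mathcal{G}_{\sigma_{\boldsymbol{\alpha}}}$. The paper removes the issue entirely via Remark~\ref{rmk cone avoids t}: deform $\Phi$ so that $\mathbf{t}$ avoids all such translates. This is finitely many conditions modulo $\Gamma$, since $\Gamma$ preserves $L$ and fixes $\mathbf{t}$ modulo $L$. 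It works only because the cochain is taken on $C_\bullet(\Phi/\Gamma)$, namely $\langle\overline{\sigma}\rangle\mapsto\rho_N\int_{X_\sigma}\mathbf{t}^*\widetilde{\mathscr{G}}$. That is the very complex on which $[\mathcal{G}]$ is defined (Definition~\ref{def shintani class}), and it computes $H^{n-1}(\Gamma,\widetilde{R}_\Gamma)$ for any $\Phi$ by Lemma~\ref{Lemma: quasi iso}. Your orbit simplices on $1,\gamma_1,\gamma_1\gamma_2,\dots$ are not cones of $\Phi$. They form infinitely many classes modulo $\Gamma$, so they cannot all be made generic, and they would still need a separate comparison with Definition~\ref{def shintani class}. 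Third, your rationality argument is false: the cone series involve the numbers $J_i(\alpha_j)$ and have coefficients in $F^{\mathrm{Gal}}$, not in $\mathbb{Q}$. The paper instead uses that $\Gal(F^{\mathrm{Gal}}/\mathbb{Q})$ permutes the embeddings, hence the variables $z_i$, while fixing $w^k$, so the diagonal coefficients of the summed series are Galois-invariant. Finally, $\ev$ pairs with the fundamental class of $\mathcal{S}^1=V^1/\Gamma\cong X/\Gamma$, not of $\Gamma\backslash V_{>0}$. Also, $[\Psi_P]$ is not something to import from the literature: in the paper it is minus the class of the $\mu=0$ mode $\vol(L)^{-1}\mathbf{t}^*\mathscr{G}_0$, whose contribution is evaluated with Corollary~\ref{corollary to hurwitz}.
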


The Klingen--Siegel rationality theorem (Corollary \ref{cor: klingen siegel}) follows immediately.

\subsection{Outline of the paper}

In Section \ref{sec:motivation}, we motivate our constructions from the analytic point of view by reviewing how special values of the Riemann, Hurwitz, and Shintani zeta functions at nonpositive integers are extracted via Mellin transforms of their generating functions.

In Section \ref{sec:polylog}, we set up the geometric framework for a totally real field $F$ of degree $n$, introduce the logarithm sheaf $\mathcal{L}\mathrm{og}$ on $T = V/L$ and the positive orthant sphere $X = \mathbb{S}^{n-1} \cap V_{>0}$, and prove Theorem \ref{residue currents} and Corollary \ref{cor:explicit_residue} on relative de Rham cohomology and residues of currents. We then introduce the cone-adapted Laurent series field $\widetilde{R}_{\mathbb{K}} = \mathbb{K}_{\leq_{\mathrm{lex}}}(\!(\mathbf{z})\!)$, define the extended logarithm sheaf $\widetilde{\mathcal{L}\mathrm{og}}$, and construct the topological polylogarithm $\pol$ (Definition \ref{def polylog}).

In Section \ref{sec:shintani}, we review Shintani cone decompositions $\Phi$ of $V_{>0} \cup \{0\}$, express cone zeta functions in terms of the generating series $\mathcal{G}_{\sigma_{\boldsymbol{\alpha}}}(\mathbf{t}, \mathbf{z})$, and use the associated simplicial resolution $C_\bullet(\Phi)$ to define the Shintani cohomology class $[\mathcal{G}]$ (Definition \ref{def shintani class}).

In Section \ref{sec:explicit_comparison}, we prove Hurwitz's integral formula on $X$ as a Mellin transform on $V_{>0}$ (Proposition \ref{hurwitz}), establish the Poisson summation identity relating Sczech's rational functions to $\mathcal{G}_{\sigma_{\boldsymbol{\alpha}}}(\mathbf{t}, \mathbf{z})$ (Proposition \ref{prop: equality of distributions}), and give the direct construction of the Green currents $\mathscr{G}$ and $\widetilde{\mathscr{G}}$ on $X \times T$ via fibre integration of the theta-form $\theta_\mu$ (Lemma \ref{lem:fibre_integral_green} and Proposition \ref{prop green current}). Evaluating $\mathbf{t}^*\mathscr{G}$ and $\mathbf{t}^*\widetilde{\mathscr{G}}$ on the simplices of $\Phi^X$ gives our main Theorem \ref{main theorem} and Corollary \ref{cor: klingen siegel}.

Finally, in Section \ref{sec:literature_comparison}, we provide a detailed comparison of our framework and results with the polylogarithm and Green current constructions of Beilinson, Kings, and Levin \cite{BKL18} (Section \ref{subsection:BKL comparison}) and with the Shintani cocycle, Solomon--Hu pairing, and Shintani operator of Charollois, Dasgupta, and Greenberg \cite{CDG15} (Section \ref{subsection:CDG comparison}).

\subsection{Acknowledgements}
This paper is based on my doctoral thesis and would not have been possible without the guidance and support of my advisor Guido Kings. I also acknowledge the help and direct influence of Johannes Sprang, Han-Ung Kufner and Lukas Prader.

\section{Motivation: Zeta Functions at Nonpositive Integers}\label{sec:motivation}

\paragraph{Riemann zeta function.}
We begin by briefly recalling the relationship between the \textit{Riemann zeta function} $\zeta$ and the \textit{Gamma function} $\Gamma$, defined respectively by
\[
    \zeta(s) := \sum_{n=1}^{\infty} n^{-s} \quad \text{and} \quad \Gamma(s) := \int_{0}^{\infty} e^{-u} u^{s-1}\,du,
\]
for $s \in \mathbb{C}$ with $\Re(s)>1$ and $\Re(s)>0$, respectively. Both functions admit meromorphic continuations to $\mathbb{C}$ (see \cite[Th\'eor\`eme VII.2.1]{Col09} and Lemma \ref{Lemma: mellin} below).

For $\lambda \in \mathbb{R}_{>0}$, the change of variables $u = \lambda t$ yields
\begin{equation}\label{eq: gamma}
    \lambda^{-s}\Gamma(s) = \int_{0}^{\infty} e^{-t\lambda} t^{s-1}\,dt,
\end{equation}
which extends by contour integration to $\lambda \in \mathbb{C}$ with $\Re(\lambda) > 0$. Setting $\lambda = n$, summing over $n \ge 1$, and interchanging summation and integration gives
\[
    \zeta(s)\Gamma(s) = \int_{0}^{\infty} \sum_{n=1}^{\infty} e^{-nt} t^{s-1}\,dt.
\]
Thus, setting $G: \mathbb{R}_{>0} \to \mathbb{R}$, $G(t) := \frac{e^{-t}}{1-e^{-t}} = \sum_{n=1}^{\infty} e^{-nt}$, we obtain the \textit{Mellin transform} representation
\begin{equation}\label{eq: mellin}
    \zeta(s)\Gamma(s) = \int_{0}^{\infty} G(t) t^{s-1}\,dt.
\end{equation}

The following classical lemma provides the analytic continuation and special values at nonpositive integers of Mellin transforms of rapidly decreasing smooth functions.

\begin{lemma}\label{Lemma: mellin}
Suppose $f \in C^{\infty}(\mathbb{R}_{\geq 0})$ and all of its derivatives decrease rapidly at infinity (i.e., $f^{(m)}(x) = O(|x|^{-N})$ as $x \to \infty$ for all $m, N \in \mathbb{N}$). Then
\[
    M(f,s) := \frac{1}{\Gamma(s)} \int_{0}^{\infty} f(t) t^{s-1}\,dt,
\]
defined initially for $\Re(s)>0$, admits an analytic continuation to an entire function on $\mathbb{C}$, and for $k \in \mathbb{N}$,
\[
    M(f,-k) = (-1)^k \left(\frac{d}{dt}\right)^k f(t) \Big|_{t=0}.
\]
\end{lemma}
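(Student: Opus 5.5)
The plan is to reduce to the case of a function vanishing to high order at $0$, by subtracting a suitable Taylor polynomial times a cutoff and integrating by parts. Since $f$ and its derivatives decay rapidly, the integral $\int_0^\infty f(t)t^{s-1}\,dt$ converges absolutely for $\Re(s)>0$ and is holomorphic there. Because $1/\Gamma$ is entire, $M(f,s)$ is holomorphic on $\Re(s)>0$.

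The key step is repeated integration by parts. For $\Re(s)>0$ we write $t^{s-1}=\frac{d}{dt}\frac{t^s}{s}$. The boundary terms vanish: at $0$ because $\Re(s)>0$, and at $\infty$ by rapid decay. This gives
\[
\int_0^\infty f(t)t^{s-1}\,dt = -\frac{1}{s}\int_0^\infty f'(t)t^{s}\,dt .
\]
Iterating $m$ times, and using $\Gamma(s+m)=s(s+1)\cdots(s+m-1)\Gamma(s)$, we obtain
\[
M(f,s)=\frac{(-1)^m}{\Gamma(s+m)}\int_0^\infty f^{(m)}(t)\,t^{s+m-1}\,dt = (-1)^m M(f^{(m)},s+m).
\]
The right-hand side is holomorphic on $\Re(s)>-m$, since $f^{(m)}$ satisfies the same hypotheses as $f$. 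It agrees with $M(f,s)$ on $\Re(s)>0$. As $m$ is arbitrary, these expressions glue by the identity theorem to an entire function.

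For the special values, take $m=k+1$ and evaluate at $s=-k$:
\[
M(f,-k)=(-1)^{k+1}M(f^{(k+1)},1)=(-1)^{k+1}\int_0^\infty f^{(k+1)}(t)\,dt .
\]
The integral equals $-f^{(k)}(0)$ by the fundamental theorem of calculus, since $f^{(k)}(t)\to 0$ as $t\to\infty$. Hence $M(f,-k)=(-1)^k f^{(k)}(0)$.

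I expect no real obstacle. The only care needed is justifying that the boundary terms vanish and that the analytic continuations are compatible, both of which follow directly from the rapid decay and smoothness up to $0$.
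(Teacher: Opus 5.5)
Your argument is correct and is essentially the paper's proof. Like the paper, you integrate by parts to get $M(f,s) = -M(f',s+1)$ on $\Re(s)>0$, iterate to $M(f,s) = (-1)^m M(f^{(m)},s+m)$ to continue to $\Re(s)>-m$, and evaluate at $s=-k$ with $m=k+1$ via the fundamental theorem of calculus. The only quibble is that your opening sentence announces a reduction by subtracting a Taylor polynomial times a cutoff, which you never use and can simply delete.
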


\begin{proof}
We follow \cite[Proposition VII.2.6]{Col09}. The rapid decay of $f$ ensures that $\int_{0}^{\infty} |f(t) t^{s-1}|\,dt$ converges uniformly on compact subsets of $\{s \in \mathbb{C} \mid \Re(s)>0\}$, and since $\Gamma(s)$ is nonvanishing and holomorphic there, $M(f,s)$ is holomorphic for $\Re(s)>0$.

Integrating by parts gives
\[
    M(f',s+1) + M(f,s) = \frac{1}{\Gamma(s+1)} \int_{0}^{\infty} f'(t) t^{s}\,dt + \frac{1}{\Gamma(s+1)} \int_{0}^{\infty} f(t) (t^{s})'\,dt = \frac{1}{\Gamma(s+1)} \bigl[f(t)t^{s}\bigr]^{\infty}_0 = 0
\]
for $\Re(s)>0$. By induction, $M(f,s) = (-1)^k M(f^{(k)},s+k)$ for $\Re(s)>0$ and $k \in \mathbb{N}$, which provides the analytic continuation of $M(f,s)$ to $\Re(s) > -k$ for all $k \in \mathbb{N}$. Evaluating at $s = -k$ yields
\[
    M(f,-k) = (-1)^{k+1} M(f^{(k+1)},1) = (-1)^{k+1} \int_{0}^{\infty} f^{(k+1)}(t)\,dt = (-1)^{k+1} \bigl[f^{(k)}(t)\bigr]^\infty_0 = (-1)^k f^{(k)}(0). \qedhere
\]
\end{proof}

\begin{remark}
In the language of distributions, for each $s \in \mathbb{C}$, $f \mapsto M(f,s)$ defines a tempered distribution $\mathcal{M}_s$ on the Schwartz space $\mathcal{S}(\mathbb{R}_{\geq 0})$. For $k \in \mathbb{N}$, $\mathcal{M}_{-k} = \delta_0^{(k)}$ is the $k$-th derivative of the Dirac delta distribution at $0$.
\end{remark}

Since $G(t)$ has a simple pole at $t=0$, we apply Lemma \ref{Lemma: mellin} to $F(t) := tG(t)$ via
\[
    \zeta(s) = \frac{1}{\Gamma(s)} \int_{0}^{\infty} G(t) t^{s-1}\,dt = \frac{1}{(s-1)\Gamma(s-1)} \int_{0}^{\infty} F(t) t^{s-2}\,dt = \frac{1}{s-1} M(F,s-1).
\]
Lemma \ref{Lemma: mellin} then yields $\zeta(-k) = (-1)^k \frac{B_{k+1}}{k+1}$, where the \textit{Bernoulli numbers} $B_n \in \mathbb{Q}$ are defined by
\[
    \sum_{n=0}^{\infty} B_n \frac{t^n}{n!} = \frac{t}{e^t-1} = \frac{te^{-t}}{1-e^{-t}} = F(t).
\]

\paragraph{Hurwitz zeta function and Dirichlet $L$-series.}
For a multiplicative Dirichlet character $\chi: (\mathbb{Z}/N\mathbb{Z})^\times \to \mathbb{C}^\times$ (extended by zero to $\mathbb{Z}$) and $0 < x \leq 1$, the \textit{Dirichlet $L$-function} and \textit{Hurwitz zeta function} are defined for $\Re(s) > 1$ by
\[
    L(s,\chi) := \sum_{n=1}^{\infty} \chi(n) n^{-s}, \qquad \zeta(s,x) := \sum_{n=0}^{\infty} (n+x)^{-s},
\]
and are related by the decomposition
\[
    L(s,\chi) = \sum_{a=1}^N \chi(a) N^{-s} \zeta\left(s, \frac{a}{N}\right).
\]
Setting $G(t,x) := \frac{e^{-xt}}{1-e^{-t}} = \sum_{n=0}^{\infty} e^{-(n+x)t}$, we have $\Gamma(s)\zeta(s,x) = \int_{0}^{\infty} G(t,x) t^{s-1}\,dt$. Applying Lemma \ref{Lemma: mellin} to $tG(t,x)$ yields the analytic continuation and special values
\[
    \zeta(-k,x) = -\frac{B_{k+1}(x)}{k+1} \quad \text{and} \quad L(-k,\chi) = -\sum_{a=1}^N \chi(a) N^{k} \frac{B_{k+1}(a/N)}{k+1},
\]
where the \textit{Bernoulli polynomials} $B_n(x) \in \mathbb{Q}[x]$ are generated by
\begin{equation}\label{eq: hurwitz generating function}
    \sum_{n=0}^{\infty} B_n(x) \frac{t^n}{n!} = \frac{t e^{tx}}{e^{t}-1} = t G(t,1-x) = -t G(-t,x)
\end{equation}
and satisfy $B_n(x) = \sum_{j=0}^n (-1)^j \binom{n}{j} B_j x^{n-j}$ (see \cite[Proposition 4.9]{AIK14}).

\paragraph{Lattices and partial zeta functions.}
Let $F$ be a totally real number field of degree $n = [F:\mathbb{Q}]$, and let $L \subset F \otimes_{\mathbb{Q}} \mathbb{R} \cong \mathbb{R}^n$ be a lattice arising from a fractional ideal of $F$. Denote by $(-)^+$ the subset of totally positive elements (lying in the positive orthant $\mathbb{R}_{>0}^n$). For a nonzero torsion point $\mathbf{t} \in (F \otimes_{\mathbb{Q}} \mathbb{R})/L$, let $\Gamma = \mathcal{O}_{\mathbf{t}}^{\times,+}$ be the group of totally positive units of $\mathcal{O}_F$ stabilizing $\mathbf{t}$ modulo $L$.

\begin{definition}
The \textit{lattice zeta function} associated to $L$ and $\mathbf{t}$ is defined for $\Re(s)>1$ by
\begin{equation}\label{eq lattice zeta}
    \zeta(\mathbf{t}, L, s) := \sum_{\alpha \in (\mathbf{t} + L)^+/\Gamma} N(\alpha)^{-s}.
\end{equation}
\end{definition}

\begin{definition}
Let $\mathfrak{f}$ and $\mathfrak{b}$ be nonzero coprime integral ideals of $\mathcal{O}_F$. The \textit{partial zeta function} of the narrow ray class of $\mathfrak{b}$ modulo $\mathfrak{f}$ is defined for $\Re(s)>1$ by
\begin{equation}\label{eq partial zeta}
    \zeta(\mathfrak{b}, \mathfrak{f}, s) := \sum_{\mathfrak{g} \sim_\mathfrak{f} \mathfrak{b}} N(\mathfrak{g})^{-s},
\end{equation}
where the sum runs over all integral ideals $\mathfrak{g} \subset \mathcal{O}_F$ in the narrow ray class of $\mathfrak{b}$ modulo $\mathfrak{f}$.
\end{definition}

Every such ideal $\mathfrak{g}$ has the form $\alpha\mathfrak{b}$ with $\alpha \in (1+\mathfrak{f}\mathfrak{b}^{-1})^+ = (1+L)^+$ for $L = \mathfrak{f}\mathfrak{b}^{-1}$, unique up to the action of $\Gamma = \mathcal{O}_1^{\times,+}$, which gives the relation
\begin{equation}\label{lattice zeta and partial}
    \zeta(\mathfrak{b}, \mathfrak{f}, s) = N(\mathfrak{b})^{-s} \zeta(1, \mathfrak{f}\mathfrak{b}^{-1}, s).
\end{equation}
When $F = \mathbb{Q}$, this recovers the Hurwitz zeta function. In higher dimensions, Siegel and Klingen (see \cite{Sie70}) proved that $\zeta(\mathfrak{b}, \mathfrak{f}, -k) \in \mathbb{Q}$ for all integers $k \geq 0$ (Corollary \ref{cor: klingen siegel}). A geometric approach to this rationality result was later given by Shintani \cite{Shi76} by decomposing partial zeta functions into finite linear combinations of cone zeta functions via polyhedral cone decompositions of the totally positive orthant.

\paragraph{Shintani zeta functions.}
Let $A = (a_{ij})$ be a complex $r \times m$ matrix with $\Re(a_{ij}) > 0$ for all $i,j$; let $\chi = (\chi_1, \dots, \chi_r) \in \mathbb{C}^r$ with $|\chi_i| \leq 1$; and let $\mathbf{x} = (x_1, \dots, x_r) \in \mathbb{R}^r$ with $0 \leq x_i \leq 1$ and $\mathbf{x} \neq 0$. Define linear forms $L_i$ on $\mathbb{C}^m$ and $L_j^*$ on $\mathbb{C}^r$ by
\[
    L_i(\mathbf{z}) := \sum_{k=1}^m a_{ik}z_k, \qquad L_j^*(\mathbf{w}) := \sum_{k=1}^r a_{kj}w_k.
\]

\begin{definition}\label{def shintani zeta function}
The \textit{Shintani zeta function} of $\mathbf{s} = (s_1, \dots, s_m) \in \mathbb{C}^m$ is defined for $\Re(s_j) > \frac{r}{m}$ by
\[
    \zeta(\mathbf{s}, A, \mathbf{x}, \chi) := \sum_{\mathbf{n} \in \mathbb{N}^r} \chi^{\mathbf{n}} \prod_{j=1}^m L_j^*(\mathbf{n}+\mathbf{x})^{-s_j}.
\]
\end{definition}

Introducing the generating function for $\mathbf{t} \in \mathbb{R}_{>0}^m$,
\[
    G(\mathbf{t}, A, \mathbf{x}, \chi) := \sum_{\mathbf{n} \in \mathbb{N}^r} \chi^{\mathbf{n}} e^{-\sum_{j=1}^m L_j^*(\mathbf{n}+\mathbf{x})t_j} = \prod_{i=1}^r \frac{e^{-x_i L_i(\mathbf{t})}}{1 - \chi_i e^{-L_i(\mathbf{t})}},
\]
we obtain the multidimensional Mellin transform representation
\begin{equation}\label{eq: integral shintani}
    \zeta(\mathbf{s}, A, \mathbf{x}, \chi) = \int_0^\infty \cdots \int_0^\infty G(\mathbf{t}, A, \mathbf{x}, \chi) \prod_{j=1}^m \frac{t_j^{s_j-1}}{\Gamma(s_j)}\,dt_j.
\end{equation}

\begin{lemma}\label{Lemma: gen Mellin}
Suppose $f \in C^{\infty}(\mathbb{R}_{\geq 0}^m)$ and all of its partial derivatives decrease rapidly at infinity. Then
\[
    M(f,\mathbf{s}) := \int_0^\infty \cdots \int_0^\infty f(\mathbf{t}) \prod_{j=1}^m \frac{t_j^{s_j-1}}{\Gamma(s_j)}\,dt_j,
\]
defined for $\Re(s_j)>0$, admits an analytic continuation to $\mathbb{C}^m$, and for $\mathbf{k} = (k_1, \dots, k_m) \in \mathbb{N}^m$,
\[
    M(f,-\mathbf{k}) = \prod_{j=1}^m \left(-\frac{\partial}{\partial t_j}\right)^{k_j} f(\mathbf{t}) \Big|_{\mathbf{t}=0}.
\]
\end{lemma}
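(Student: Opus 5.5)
The plan is to reduce to the one-variable Lemma \ref{Lemma: mellin} by peeling off one variable at a time, using Fubini and the same integration-by-parts identity coordinatewise. First I would check holomorphy on the region $\Re(s_j)>0$ for all $j$. Near $\mathbf{t}=0$, $f$ is bounded and $\prod_j |t_j^{s_j-1}|$ is integrable on $[0,1]^m$. Away from the origin, the rapid decay of $f$ dominates any polynomial growth of $\prod_j t_j^{\Re(s_j)-1}$. This gives absolute and locally uniform convergence in $\mathbf{s}$, so $M(f,\mathbf{s})$ is holomorphic there, since $1/\Gamma$ is entire.

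Next comes the coordinatewise recursion. Fix $j$ and integrate by parts in $t_j$ alone, exactly as in the proof of Lemma \ref{Lemma: mellin}, using $t_j^{s_j-1}/\Gamma(s_j) = \partial_{t_j}\bigl(t_j^{s_j}/\Gamma(s_j+1)\bigr)$. The boundary terms vanish at $t_j=0$ because $\Re(s_j)>0$, and at $\infty$ by rapid decay. This gives
\[
M(f,\mathbf{s}) = -M(\partial_{t_j} f, \mathbf{s}+e_j).
\]
Iterating in every coordinate yields
\[
M(f,\mathbf{s}) = (-1)^{|\mathbf{k}|} M(\partial^{\mathbf{k}} f, \mathbf{s}+\mathbf{k})
\]
for all $\mathbf{k}\in\mathbb{N}^m$. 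Every partial derivative $\partial^{\mathbf{k}}f$ again satisfies the hypotheses. The right-hand side is therefore holomorphic on $\{\Re(s_j)>-k_j\ \forall j\}$, and these continuations agree on overlaps by the identity theorem. Their union is $\mathbb{C}^m$, which gives the entire continuation.

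To evaluate at $\mathbf{s}=-\mathbf{k}$, apply the identity with $\mathbf{k}+\mathbf{1}$ in place of $\mathbf{k}$, where $\mathbf{1}=(1,\dots,1)$:
\[
M(f,-\mathbf{k}) = (-1)^{|\mathbf{k}|+m} M(\partial^{\mathbf{k}+\mathbf{1}} f, \mathbf{1}) = (-1)^{|\mathbf{k}|+m}\int_{\mathbb{R}_{\ge0}^m} \partial_{t_1}\cdots\partial_{t_m}\bigl(\partial^{\mathbf{k}} f\bigr)\,d\mathbf{t},
\]
using $\Gamma(1)=1$. By Fubini and the fundamental theorem of calculus in each variable, this integral equals $(-1)^m\,\partial^{\mathbf{k}}f(0)$, since each $\int_0^\infty \partial_{t_j}g\,dt_j = -g|_{t_j=0}$ by decay. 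Hence $M(f,-\mathbf{k}) = (-1)^{|\mathbf{k}|}\partial^{\mathbf{k}}f(0) = \prod_j(-\partial_{t_j})^{k_j}f(0)$, as claimed.

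The main obstacle is keeping the partial integrations legitimate when some coordinates $s_i$ with $i\neq j$ are not yet shifted. Concretely, one must justify Fubini and the vanishing of the boundary terms uniformly in the other variables. I would handle this by always working in the region where all $\Re(s_i)>0$ before shifting. There the integrand is absolutely integrable, dominated by $C_N\prod_i t_i^{\Re(s_i)-1}(1+|\mathbf{t}|)^{-N}$. This bound also controls the boundary term $f\,t_j^{s_j}$ at $t_j\to\infty$ uniformly in the other variables.
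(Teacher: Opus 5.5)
Your proof is correct and takes essentially the paper's route: the paper only says the lemma follows by iterating the one-variable Lemma~\ref{Lemma: mellin} in each variable (citing Colmez), and your coordinatewise integration by parts, giving $M(f,\mathbf{s}) = (-1)^{|\mathbf{k}|}M(\partial^{\mathbf{k}}f,\mathbf{s}+\mathbf{k})$, is that iteration written out, with the Fubini, joint-holomorphy and evaluation details supplied. One phrase is imprecise: in your first paragraph, ``away from the origin'' includes regions where some $t_j\to 0$ while others are large, and decay of $f$ alone does not control the singular factor there; your bound $C_N\prod_i t_i^{\Re(s_i)-1}(1+|\mathbf{t}|)^{-N}$ in the last paragraph does cover these regions, so nothing is actually missing.
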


\begin{proof}
This follows by iterating Lemma \ref{Lemma: mellin} in each variable (see also \cite[Lemme 3.2]{Col88}).
\end{proof}

When $\chi_i \neq 1$ for all $i$, $G(\mathbf{t}, A, \mathbf{x}, \chi)$ is smooth at $\mathbf{t}=0$ and Lemma \ref{Lemma: gen Mellin} directly yields its special values at $-\mathbf{k}$. When $\chi_i = 1$, however, $G(\mathbf{t}, A, \mathbf{x}, 1)$ has poles along the hyperplanes $L_i(\mathbf{t}) = 0$, requiring either a sectorial decomposition of $\mathbb{R}_{>0}^m$ as in \cite{Shi76} or an algebraic Laurent series formalism as developed in Section \ref{section Field of Coefficients} below.

\section{The Topological Polylogarithm and Residues of Currents}\label{sec:polylog}

\subsection{Setup}\label{section setup}

Throughout this paper, we follow the convention ``right action on spaces, left action on cohomology.'' The identifications below offer a good dictionary when comparing different approaches throughout the literature (see section \ref{sec:literature_comparison} for detailed comparisons).

\paragraph{Orientations, bases, and identifications.}
Let $F$ be a totally real number field of degree $n = [F:\mathbb{Q}]$, let $F^{\mathrm{Gal}} \subset \mathbb{R}$ be the Galois closure of $F$ over $\mathbb{Q}$ inside $\mathbb{R}$, and let $\mathfrak{f}$ and $\mathfrak{b}$ be nonzero integral ideals of $\mathcal{O}_F$. Fix an ordering $J_1, \dots, J_n: F \hookrightarrow F^{\mathrm{Gal}} \subset \mathbb{R}$ of the real embeddings of $F$. This induces an isomorphism of $\mathbb{R}$-algebras and $\mathbb{R}$-vector spaces:
\[
\begin{tikzcd}[row sep=tiny]
    F \otimes_{\mathbb{Q}} \mathbb{R} \arrow[rr, "J", "\cong"'] && \prod_{i=1}^n \mathbb{R} = \mathbb{R}^n \\
    x \otimes r \arrow[rr, maps to] && r\bigl(J_1(x), \dots, J_n(x)\bigr).
\end{tikzcd}
\]
Let $\mathbf{w} = (w_1, \dots, w_n)$ be a $\mathbb{Z}$-basis of the fractional ideal $\mathfrak{f}\mathfrak{b}^{-1}$ (and hence a $\mathbb{Q}$-basis of $F$). We obtain the following identifications of $\mathbb{R}$-vector spaces and lattices:
\begin{equation}\label{identification}
\begin{tikzcd}[row sep=tiny]
    \mathbb{Q}^n \otimes_{\mathbb{Q}} \mathbb{R} \arrow[rr, "\cdot \mathbf{w}", "\cong"'] && F \otimes_{\mathbb{Q}} \mathbb{R} \arrow[rr, "J", "\cong"'] && \mathbb{R}^n =: V \\
    \mathbb{Z}^n \otimes 1 \arrow[rr, maps to] && \mathfrak{f}\mathfrak{b}^{-1} \otimes 1 \arrow[rr, maps to] && L
\end{tikzcd}
\end{equation}
Under \eqref{identification}, pulling back the norm $N$ via $J(\mathbf{w})$ gives $\zeta(1, \mathfrak{f}\mathfrak{b}^{-1}, s) = \zeta(1, L, s)$ and $\zeta(\mathfrak{b}, \mathfrak{f}, s) = N(\mathfrak{b})^{-s}\zeta(1, L, s)$.

\paragraph{The norm-one hypersurface.}
Write $V = \mathbb{R}^n$ and $V_{>0} = \mathbb{R}_{>0}^n$, and set $(F \otimes_{\mathbb{Q}} \mathbb{R})_{>0} := J^{-1}(V_{>0})$. As multiplicative topological groups, we have a split short exact sequence
\begin{equation}\label{split ses}
\begin{tikzcd}
    1 \arrow[r] & V^1 \arrow[r] & V_{>0} \arrow[r, "N"] \arrow[l, "{p_1}"', bend right=25] & \mathbb{R}_{>0} \arrow[r] & 1,
\end{tikzcd}
\end{equation}
where $N(v) = v_1 \cdots v_n$ is the norm map, $V^1 := \{q \in V_{>0} \mid N(q) = 1\}$ is the norm-one hypersurface, and $p_1(v) := N(v)^{-1/n} v$ is the radial projection. This induces the explicit diffeomorphism
\begin{equation}\label{eq:V1_splitting}
    V_{>0} \xrightarrow{\sim} V^1 \times \mathbb{R}_{>0}, \qquad v \mapsto \bigl(N(v)^{-1/n}v,\, N(v)^{1/n}\bigr),
\end{equation}
with inverse $(q, t) \mapsto tq$.

Let $\Gamma \subset \mathcal{O}_F^{\times,+}$ be a subgroup of finite index of the group of totally positive units. Then the action of $\Gamma$ on $\mathfrak{f}\mathfrak{b}^{-1}$ preserves $L$, $V_{>0}$, and $V^1$, and is given by:
\begin{equation}
\begin{tikzcd}
F \otimes_{\mathbb{Q}} \mathbb{R} \arrow[loop above, "{F^\times}"] \arrow[rr, "J", "\cong"'] && \mathbb{R}^n \arrow[loop above, "{\diag(J(F^\times))}"] &&
\mathfrak{f}\mathfrak{b}^{-1} \arrow[loop above, "{\Gamma}"] \arrow[rr, "J", "\cong"'] && L \arrow[loop above, "{\diag(J(\Gamma))}"],
\end{tikzcd}
\end{equation}
making the identifications in \ref{identification} equivariant. To harmonize left and right actions of the abelian group $\Gamma$ with \cite[(3.12)]{BKL18}, we write $v \cdot \gamma := \gamma^{-1} v$ for the right action of $\gamma \in \Gamma$ on $v \in V$.

\subsection{The geometric picture}\label{section geometric picture}

We now develop the theory of topological polylogarithms from \cite{BKL18} and \cite{Gra16} within our framework.

\paragraph{Torus families and the logarithm sheaf.}\label{paragraph log sheaf}
Let $V, L, \Gamma$ be as in Section \ref{section setup}, and let $\pi: V \to T := V/L$ be the quotient map to the compact real $n$-torus $T$. Setting $\mathcal{S} := V_{>0}/\Gamma$, the action of $\Gamma$ on $L$ defines a family of real tori $(V_{>0} \times T)/\Gamma = (V_{>0} \times V)/(L \rtimes \Gamma) \to \mathcal{S}$ over $\mathcal{S}$.

Let $A$ be a Noetherian commutative ring and let $A[L]$ be the group ring of $L$ with canonical basis $\delta_l$ given by the universal group homomorphism $\delta: L \rightarrow A[L]^\times$, $l \mapsto \delta_l$, so that $l \in L$ acts on $A[L]$ via $\delta_l$. Let
\[
    R := R_A := \varprojlim_k A[L]/J^k
\]
be the completion of $A[L]$ with respect to the augmentation ideal $J := \ker(\aug: A[L] \to A,\, \delta_l \mapsto 1)$, with $I := JR$. The group $L = \pi_1(T)$ acts on $R$ via $l \cdot r = \delta_l r$, and the action of $\Gamma$ on $L$ extends by continuity to $R$.

\begin{definition}\label{def log sheaf}
Equip $V \times R$ with the right $L$-action $(v,r) \cdot l := (v+l, \delta_{-l}r)$. The \textit{logarithm sheaf} $\mathcal{L}\mathrm{og} = \mathcal{L}\mathrm{og}_T$ on $T$ is the locally constant sheaf of $R$-modules defined as the sheaf of sections of the quotient $\tilde{\pi}: (V \times R)/L \to T$. Equivalently,
\[
    \mathcal{L}\mathrm{og} := \underline{R} \otimes_{\underline{A[L]}} \pi_!\underline{A}.
\]
\end{definition}

The right action of $\Gamma$ on $(V \times R)/L$ given by $((v,r), \gamma) \mapsto (\gamma^{-1}v, \gamma^{-1}(r))$ makes $\mathcal{L}\mathrm{og}$ a $\Gamma$-equivariant local system on $T$. 

\paragraph{The positive orthant sphere $X$ and diagram of spaces.}
To perform explicit analytic computations more easily, we consider the unit sphere restricted to the positive orthant:

\begin{definition}
Let $\mathbb{S}^{n-1} \subset V$ be the unit sphere centered at the origin with respect to the Euclidean norm $\Vert x \Vert = \langle x, x \rangle^{1/2}$ induced by $J$. We define the auxiliary space
\[
    X := \mathbb{S}^{n-1} \cap V_{>0}.
\]
\end{definition}

The radial projections $p_X: V_{>0} \to X$, $x \mapsto \frac{x}{\Vert x \Vert}$, and $p_1: V_{>0} \to V^1$, $x \mapsto N(x)^{-1/n}x$, restrict to mutually inverse diffeomorphisms
\[
    \psi := p_X|_{V^1}: V^1 \xrightarrow{\sim} X, \qquad \psi^{-1} = p_1|_X: X \xrightarrow{\sim} V^1.
\]
Through $\psi$, the space $X$ inherits the $\Gamma$-action from $V^1$. Fixing a nonzero torsion point $\mathbf{t} \in T^{(A)}$ stabilized by $\Gamma$ (which defines a constant torsion section), we have the commutative diagram of spaces:
\begin{equation}\label{diagram of spaces}
\begin{tikzcd}
    {\faktor{V_{>0} \times V}{L \rtimes \Gamma}} \arrow[rr, "{p_1}"] \arrow[d, equals] && {\faktor{V^1 \times V}{L \rtimes \Gamma}} \arrow[rr, "\psi", "\cong"'] \arrow[d, equals] && {\faktor{X \times V}{L \rtimes \Gamma}} \arrow[d, equals] \\
    {\faktor{V_{>0} \times T}{\Gamma}} \arrow[rr, "{p_1}"] \arrow[dd] && {\faktor{V^1 \times T}{\Gamma}} \arrow[rr, "\psi", "\cong"'] \arrow[dd] && {\faktor{X \times T}{\Gamma}} \arrow[dd] \\
    \\
    \mathcal{S} \arrow[rr, "{p_1}"] && {\mathcal{S}^1} \arrow[rr, "\psi", "\cong"'] \arrow[uu, "\mathbf{t}"', bend right=25] && {\faktor{X}{\Gamma}} \arrow[uu, "\mathbf{t}"', bend right=25].
\end{tikzcd}
\end{equation}
Since $X \cong V^1 \cong \mathbb{R}^{n-1}$ is contractible, for the punctured torus $\overset{\circ}{T} := T \setminus \{0\}$ (and $\overset{\circ}{V} := V \setminus L$) we have canonically
\[
    H^{n-1}(\overset{\circ}{T}, \Gamma; \mathcal{L}\mathrm{og}) \cong H^{n-1}\left(\faktor{X \times \overset{\circ}{T}}{\Gamma}, \mathcal{L}\mathrm{og}\right) \xrightarrow{\psi^*} H^{n-1}\left(\faktor{V^1 \times \overset{\circ}{T}}{\Gamma}, \mathcal{L}\mathrm{og}\right) \xrightarrow{\mathbf{t}^*} H^{n-1}(\mathcal{S}^1, \mathbf{t}^*\mathcal{L}\mathrm{og}).
\]

\subsection{Cohomology, localization, and residues of currents}\label{sec:currents}

We now review the relation between sheaf cohomology with supports, relative de Rham cohomology, and currents on smooth manifolds (following \cite{Ive86, KS90, dR84, GH94}), and prove that the topological residue is computed by the exterior derivative of currents.

\paragraph{Sheaves on manifolds and distinguished triangles.}
Let $X$ be an oriented smooth manifold of dimension $n$, let $i: Z \hookrightarrow X$ be a closed oriented smooth submanifold of codimension $c = \dim X - \dim Z$, and let $j: U := X \setminus Z \hookrightarrow X$ be the open complement. In the derived category $\mathcal{D}(\mathrm{Sh}(X))$, the pairs of adjoint functors $(j_!, j^* = j^!)$, $(j^*, Rj_*)$, $(i^*, i_* = i_!)$, and $(i_*, Ri^!)$ fit into the dual distinguished triangles
\begin{equation}\tag{$\bigtriangleup_1$}\label{triangle1}
    j_! j^* \longrightarrow 1 \longrightarrow i_* i^* \longrightarrow [+1],
\end{equation}
\begin{equation}\tag{$\bigtriangleup_2$}\label{triangle2}
    i_* Ri^! \longrightarrow 1 \longrightarrow Rj_* j^* \longrightarrow [+1].
\end{equation}
For any sheaf $\mathcal{F} \in \mathrm{Sh}(X)$, \eqref{triangle2} induces the long exact localization sequence in cohomology with supports in $Z$:
\[
    \cdots \longrightarrow H^{p-1}(X, \mathcal{F}) \xrightarrow{j^*} H^{p-1}(U, \mathcal{F}|_U) \xrightarrow{\res} H^p_Z(X, \mathcal{F}) \longrightarrow H^p(X, \mathcal{F}) \xrightarrow{j^*} H^p(U, \mathcal{F}|_U) \longrightarrow \cdots,
\]
while applying $Rf_!$ ($f: X \to \{\mathrm{pt}\}$) to \eqref{triangle1} yields the long exact sequence in compactly supported cohomology:
\[
    \cdots \longrightarrow H^q_c(U, \mathcal{F}|_U) \xrightarrow{j_!} H^q_c(X, \mathcal{F}) \xrightarrow{i^*} H^q_c(Z, \mathcal{F}|_Z) \longrightarrow \cdots.
\]

\paragraph{Differential forms, currents, and the relative de Rham cocone.}
Let $\Omega_X^\bullet$ be the de Rham complex of smooth $\mathbb{C}$-valued differential forms on $X$, which is a fine (hence $c$-soft) resolution of $\underline{\mathbb{C}}_X$. The sheaf of $p$-currents $\widehat{\Omega}_X^p$ assigns to each open $W \subset X$ the continuous dual $\widehat{\Omega}_X^p(W) := \Gamma_c(W, \Omega_X^{n-p})^\vee$ equipped with the differential
\[
    d: \widehat{\Omega}_X^p \longrightarrow \widehat{\Omega}_X^{p+1}, \qquad (d\mathcal{T})(\eta) := (-1)^{p+1}\mathcal{T}(d\eta) \quad \bigl(\eta \in \Omega_c^{n-p-1}(W)\bigr).
\]
On the oriented manifold $X$, the inclusion of smooth forms into currents,
\[
    \mathcal{T}_-: \Omega_X^p(X) \hookrightarrow \widehat{\Omega}_X^p(X), \qquad \omega \mapsto \mathcal{T}_\omega := \left(\eta \mapsto \int_X \omega \wedge \eta\right),
\]
is a cochain map (by Stokes' theorem) and a quasi-isomorphism (\cite[Chapter III]{dR84}).

Since $\Omega_X^\bullet$ and $\widehat{\Omega}_X^\bullet$ are not flasque, to compute the topological residue $\res: H^{p-1}(U, \underline{\mathbb{C}}) \to H^p_Z(X, \underline{\mathbb{C}})$ via differential forms and currents, we must first pass to the cocone $C(j^*)$ of the restriction $j^*$, which shifts the distinguished triangle \eqref{triangle2} in the homotopy category to
\begin{equation}\tag{$\bigtriangleup_{C(j^*)}$}\label{cocone}
    j_* j^*[-1] \xrightarrow{-\res} C(j^*) \longrightarrow 1 \longrightarrow j_* j^*.
\end{equation}
Applied to $\Omega_X^\bullet$, the cocone complex $C(j^*\Omega_X^\bullet)$ is given in degree $p$ by
\[
    C(j^*\Omega_X^\bullet)^p := \Omega_U^{p-1}(U) \oplus \Omega_X^p(X), \qquad d(\theta, \omega) := (j^*\omega - d\theta,\, d\omega),
\]
and computes $H^p_Z(X, \underline{\mathbb{C}})$. In this complex, a closed form $\theta \in \Omega_U^{p-1}(U)$ maps under $-\res$ to $[(\theta, 0)] \in H^p(C(j^*\Omega_X^\bullet))$.

\paragraph{Residue of currents.}
Recall that the restriction of currents $j^*: \widehat{\Omega}_X^p(X) \to \widehat{\Omega}_U^p(U)$ is the transpose of the extension-by-zero inclusion $j_!: \Omega_c^{n-p}(U) \hookrightarrow \Omega_c^{n-p}(X)$. By definition of the support of a current, the kernel of $j^*$ is the space of $p$-currents on $X$ supported in $Z$:
\[
    \Gamma_Z(X, \widehat{\Omega}_X^p) := \ker\left(j^*: \widehat{\Omega}_X^p(X) \to \widehat{\Omega}_U^p(U)\right) = \left(\Omega_c^{n-p}(X) / \Omega_c^{n-p}(U)\right)^\vee.
\]
On the other hand, since $\dim Z = n - c$ and the pullback of test forms $i^*: \Omega_c^{n-p}(X) \twoheadrightarrow \Omega_c^{n-p}(Z)$ is surjective (and vanishes on $\Omega_c^{n-p}(U)$), its transpose defines the pushforward of currents from $Z$ to $X$:
\[
    i_* := (i^*)^\vee: \widehat{\Omega}_Z^{p-c}(Z) \hookrightarrow \Gamma_Z(X, \widehat{\Omega}_X^p) \subset \widehat{\Omega}_X^p(X), \qquad (i_*\mathcal{T}_Z)(\eta) := \mathcal{T}_Z(i^*\eta)
\]
for $\eta \in \Omega_c^{n-p}(X)$. Equipping the shifted complex $\widehat{\Omega}_Z^\bullet(Z)[-c]$ with the standard shifted differential $d_{\widehat{\Omega}_Z[-c]} := (-1)^c d_Z$, the pushforward $i_*: \widehat{\Omega}_Z^\bullet(Z)[-c] \hookrightarrow \Gamma_Z(X, \widehat{\Omega}_X^\bullet)$ is a cochain map: for $\mathcal{T}_Z \in \widehat{\Omega}_Z^{p-c}(Z)$ and $\eta \in \Omega_c^{n-p-1}(X)$,
\[
    \bigl(d_X(i_*\mathcal{T}_Z)\bigr)(\eta) = (-1)^{p+1} \mathcal{T}_Z(i^* d_X\eta) = (-1)^{p+1} \mathcal{T}_Z(d_Z i^*\eta) = (-1)^c \bigl(i_*(d_Z\mathcal{T}_Z)\bigr)(\eta).
\]
Furthermore, since $j^*\mathcal{T} = 0$ for every $\mathcal{T} \in \Gamma_Z(X, \widehat{\Omega}_X^p)$, the inclusion into the second factor of the current cocone $C(j^*\widehat{\Omega}_X^\bullet)^p = \widehat{\Omega}_U^{p-1}(U) \oplus \widehat{\Omega}_X^p(X)$,
\[
    \iota: \Gamma_Z(X, \widehat{\Omega}_X^\bullet) \hookrightarrow C(j^*\widehat{\Omega}_X^\bullet), \qquad \mathcal{T} \mapsto (0, \mathcal{T}),
\]
satisfies $d(0, \mathcal{T}) = (j^*\mathcal{T} - d(0), d\mathcal{T}) = (0, d\mathcal{T}) = \iota(d\mathcal{T})$ and hence is also a cochain map.

\begin{theorem}\label{residue currents}
Let $i: Z \hookrightarrow X$ be a closed oriented smooth submanifold of codimension $c$ in an oriented $n$-dimensional manifold $X$, and let $j: U = X \setminus Z \hookrightarrow X$ be its open complement. Then the natural cochain maps
\[
    \Omega_Z^\bullet(Z)[-c] \xrightarrow{\mathcal{T}_{-,Z}} \widehat{\Omega}_Z^\bullet(Z)[-c] \xrightarrow{i_*} \Gamma_Z(X, \widehat{\Omega}_X^\bullet) \xrightarrow{\iota} C(j^*\widehat{\Omega}_X^\bullet) \xleftarrow{\mathcal{T}_-} C(j^*\Omega_X^\bullet)
\]
are all quasi-isomorphisms. Consequently, they induce canonical isomorphisms in cohomology:
\[
    H^{p-c}(\Omega_Z^\bullet(Z)) \xrightarrow{\sim} H^{p-c}(\widehat{\Omega}_Z^\bullet(Z)) \xrightarrow[\sim]{i_*} H^p(\Gamma_Z(X, \widehat{\Omega}_X^\bullet)) \xrightarrow[\sim]{\iota} H^p(C(j^*\widehat{\Omega}_X^\bullet)) \xleftarrow{\sim} H^p(C(j^*\Omega_X^\bullet)).
\]
In particular, every cohomology class in $H^p(\Gamma_Z(X, \widehat{\Omega}_X^\bullet))$ is represented by the pushforward $i_*\mathcal{T}_{\alpha,Z}$ of a smooth closed $(p-c)$-form $\alpha \in \Omega_Z^{p-c}(Z)$ on $Z$.
\end{theorem}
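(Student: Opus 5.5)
Since every map in the chain is a cochain map, we prove the four quasi-isomorphism statements one at a time. The two outer ones follow from de Rham's theorem. First, $\mathcal{T}_{-,Z}:\Omega_Z^\bullet(Z)\to\widehat{\Omega}_Z^\bullet(Z)$ is a quasi-isomorphism by \cite[Chapter III]{dR84} applied to the oriented manifold $Z$. Shifting by $[-c]$ with the sign $(-1)^c$ on the differential changes neither the cycles nor the boundaries, so the shifted map is still a quasi-isomorphism. For the right-hand map $\mathcal{T}_-: C(j^*\Omega_X^\bullet)\to C(j^*\widehat{\Omega}_X^\bullet)$, we check that $(\theta,\omega)\mapsto(\mathcal{T}_\theta,\mathcal{T}_\omega)$ commutes with the cocone differentials. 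This holds because $\mathcal{T}_-$ commutes with $d$ and with restriction to $U$. The map induces a morphism of the long exact sequences
\[
\cdots\to H^{p-1}(U)\to H^p(C(j^*))\to H^p(X)\to H^p(U)\to\cdots
\]
for forms and for currents. The comparison maps on $X$ and on $U$ are isomorphisms by \cite{dR84}, so the five lemma shows that $\mathcal{T}_-$ is a quasi-isomorphism of cocones.

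For $\iota$ we argue sheaf-theoretically. The short exact sequence
\[
0\to\Gamma_Z(X,\widehat{\Omega}_X^\bullet)\to\widehat{\Omega}_X^\bullet(X)\xrightarrow{j^*}\widehat{\Omega}_U^\bullet(U)\to 0
\]
is exact on the right because the sheaf of currents is flabby: a continuous functional on $\Omega_c(U)$ extends to $\Omega_c(X)$ by Hahn--Banach, or more concretely by duality together with the flabbiness of distributions. For a surjective cochain map $f:A\to B$, the inclusion $\ker f\to C(f)$, $a\mapsto(0,a)$, is a quasi-isomorphism, because its cokernel is the cone of $\mathrm{id}_B$ and is therefore acyclic. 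This gives the claim for $\iota$.

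The main step is that $i_*:\widehat{\Omega}_Z^\bullet(Z)[-c]\to\Gamma_Z(X,\widehat{\Omega}_X^\bullet)$ is a quasi-isomorphism. Both sides compute $H^\bullet_Z(X,\underline{\mathbb{C}})\cong H^{\bullet-c}(Z,or_{Z/X})$ by the Thom isomorphism, and the orientations trivialise the orientation sheaf. We plan to localise instead of using a direct structure theorem. Both sides are global sections of complexes of soft (indeed flabby) sheaves on $X$ supported on $Z$: namely $U'\mapsto\widehat{\Omega}_Z(U'\cap Z)$ and $U'\mapsto\Gamma_{Z\cap U'}(U',\widehat{\Omega}_X)$. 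It therefore suffices to show that $i_*$ is a quasi-isomorphism of sheaf complexes, and this is a local statement. By the tubular neighbourhood theorem we may assume $X=Z\times\mathbb{R}^c$ locally, with $Z$ a ball.

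In this chart we use the structure of currents supported on $Z\times\{0\}$. By L.~Schwartz's theorem such currents are locally finite sums $\sum_\beta \partial_y^\beta(\delta_0)\otimes\mathcal{T}_\beta$ of transversal derivatives of the delta current. We filter by transversal order. The plan is to build an explicit homotopy contracting the higher-order part, that is, a transversal homotopy operator from the Poincaré lemma with compact supports on $\mathbb{R}^c$ dualised, and thereby reduce to $H^c_{\{0\}}(\mathbb{R}^c)\cong\mathbb{C}$ generated by $\delta_0$. Equivalently, we dualise the compact-support Poincaré lemma $H_c^\bullet(Z\times\mathbb{R}^c)\cong H_c^{\bullet-c}(Z)$ given by fibre integration. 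Near $Z$, the transpose of restriction $i^*$ agrees up to homotopy with the transpose of the inverse Thom map, and the two are compatible.

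This local computation is the expected obstacle. The cleanest route is probably the following. The sheaf complex $\Gamma_Z(\widehat{\Omega}_X^\bullet)$ is a flabby resolution of $i_*Ri^!\underline{\mathbb{C}}$, since $\widehat{\Omega}_X^\bullet$ is a flabby resolution of $\underline{\mathbb{C}}$ and $\Gamma_Z$ computes $Ri^!$. Moreover $Ri^!\underline{\mathbb{C}}\cong\underline{\mathbb{C}}_Z[-c]$ for an oriented submanifold. It then remains to check that $i_*$ induces this isomorphism on the single cohomology sheaf in degree $c$. For that one computes locally that $\delta_{Z}$ is sent to the generator, which is the statement that $\int_{\mathbb{R}^c}$ of a Thom form is $1$.

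Finally, the last assertion of the theorem follows from the first two quasi-isomorphisms. Every class in $H^p(\Gamma_Z(X,\widehat{\Omega}_X^\bullet))$ is then the image of a class $[\alpha]$ with $\alpha$ a closed smooth $(p-c)$-form on $Z$, and it is represented by $i_*\mathcal{T}_{\alpha,Z}$.
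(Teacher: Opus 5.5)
Your treatment of $\mathcal{T}_{-,Z}$ and of $\mathcal{T}_-$ on cocones agrees with the paper. The gap is elsewhere: your argument for $\iota$, and your ``cleanest route'' for $i_*$, both rest on the claim that currents form a flabby sheaf, i.e.\ that $j^*\colon\widehat{\Omega}_X^\bullet(X)\to\widehat{\Omega}_U^\bullet(U)$ is surjective. This is false; distributions form a soft but not a flabby sheaf. Hahn--Banach does not apply, because the inductive-limit topology on $\Omega_c(U)$ is strictly finer than the one induced from $\Omega_c(X)$. So a current on $U$ need not be continuous for the subspace topology. A concrete case: take $X=\mathbb{R}$, $Z=\{0\}$, and either the function equal to $e^{1/x}$ for $x>0$ and $0$ for $x<0$, or $\sum_{k\geq 1}\delta^{(k)}_{1/k}$. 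Each is a $0$-current on $U$ with unbounded growth (resp.\ order) near $0$, so neither extends to $\mathbb{R}$, since every distribution has finite order near $0$. Without right exactness, your cone lemma only shows that $\iota$ is a quasi-isomorphism if and only if the subcomplex $j^*\widehat{\Omega}_X^\bullet(X)$ of extendable currents includes quasi-isomorphically into $\widehat{\Omega}_U^\bullet(U)$, and that is not formal. For the same reason, $\Gamma_Z$ of a merely soft resolution does not compute $R\Gamma_Z$. For the soft resolution $\Omega_X^\bullet$ one has $\Gamma_Z(X,\Omega_X^\bullet)=0$ when $c\geq 1$, yet $H^c_Z(X,\underline{\mathbb{C}})\neq 0$ for $Z\neq\emptyset$. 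That currents, unlike smooth forms, compute local cohomology along $Z$ is exactly the analytic content of the theorem, so it cannot be obtained from a flabbiness shortcut.

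The paper never extends currents. It works on the test-form side, where $0\to\Omega_c^{n-\bullet}(U)\to\Omega_c^{n-\bullet}(X)\to\Omega_c^{n-\bullet}(X)/\Omega_c^{n-\bullet}(U)\to 0$ is trivially exact and $\Gamma_Z(X,\widehat{\Omega}_X^\bullet)$ is the dual of the quotient of germs along $Z$. For $i_*$, it shows that $i^*$ from germs to $\Omega_c^{n-\bullet}(Z)$ is a homotopy equivalence and then transposes. The homotopy equivalence uses the section $\xi\mapsto[\chi\,\pi_Z^*\xi]$ and the fibrewise Poincar\'e homotopy on a tubular neighbourhood. For $\iota$, it compares the cocone long exact sequence with the dual of the long exact sequence of that test-form sequence. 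It then uses de Rham duality $H^k(\widehat{\Omega}^\bullet(W))\cong H_c^{n-k}(W)^\vee$ for $W=X,U$ and the five lemma.

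Your first plan for $i_*$ is a viable alternative to the paper's dual homotopy, but it is only sketched. It combines Schwartz's structure theorem for currents supported on $Z\times\{0\}$, a transversal homotopy reducing to $\delta_0$, and softness to pass from sheaves to global sections. Even granting it, you still need a correct proof for $\iota$. One option is the paper's duality argument. Another is to check at points of $Z$ that the sheaf map $\Gamma_Z\widehat{\Omega}_X^\bullet\to\mathrm{cocone}\bigl(\widehat{\Omega}_X^\bullet\to j_*j^{-1}\widehat{\Omega}_X^\bullet\bigr)$ is a quasi-isomorphism. That is a local residue computation matching the integration current of $Z$ with the Thom class. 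Since all these sheaves are $C^\infty_X$-modules and hence soft, the local statement then passes to global sections.
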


\begin{proof}
First, on the oriented manifold $Z$, the inclusion of smooth forms into currents $\mathcal{T}_{-,Z}: \Omega_Z^\bullet(Z)[-c] \to \widehat{\Omega}_Z^\bullet(Z)[-c]$ is a quasi-isomorphism by de Rham's theorem (\cite[Chapter III]{dR84}). Similarly, since $\Omega_X^\bullet(X) \to \widehat{\Omega}_X^\bullet(X)$ and $\Omega_U^\bullet(U) \to \widehat{\Omega}_U^\bullet(U)$ are quasi-isomorphisms commuting with $j^*$, the Five Lemma applied to the long exact cohomology sequences of the cocone triangles shows that $\mathcal{T}_-: C(j^*\Omega_X^\bullet) \to C(j^*\widehat{\Omega}_X^\bullet)$, $(\theta, \omega) \mapsto (\mathcal{T}_{\theta,U}, \mathcal{T}_{\omega,X})$, is a quasi-isomorphism.

Second, we show that $i_*: \widehat{\Omega}_Z^\bullet(Z)[-c] \hookrightarrow \Gamma_Z(X, \widehat{\Omega}_X^\bullet)$ is a cochain homotopy equivalence (and hence a quasi-isomorphism). Because the sheaves $\Omega_X^{n-\bullet}$ are $c$-soft, the sequence of compactly supported forms from $(Rf_!\bigtriangleup_1)$ is exact:
\[
    0 \to \Omega_c^{n-\bullet}(U) \xrightarrow{j_!} \Omega_c^{n-\bullet}(X) \xrightarrow{q_Z} \Omega_c^{n-\bullet}(X) / \Omega_c^{n-\bullet}(U) \to 0,
\]
where the quotient $\Omega_c^{n-p}(X) / \Omega_c^{n-p}(U) \cong \Gamma_c(Z, i^*\Omega_X^{n-p})$ is the space of germs along $Z$ of compactly supported $(n-p)$-forms on $X$. Choose a tubular neighbourhood $\pi_Z: N \to Z$ of $Z$ in $X$ and a smooth cutoff function $\chi \in C^\infty(X)$ supported in $N$ such that $\supp(\chi)$ is proper over $Z$ and $\chi \equiv 1$ in an open neighbourhood $V \subset N$ of $Z$. Then every $\eta \in \Omega_c^{n-p}(X)$ decomposes as $\eta = \chi\eta + (1-\chi)\eta$, where $(1-\chi)\eta \in \Omega_c^{n-p}(U)$ vanishes on $V$ and $\chi\eta$ is supported in $N$. The map $s: \Omega_c^{n-\bullet}(Z) \to \Omega_c^{n-\bullet}(X)/\Omega_c^{n-\bullet}(U)$ given by $\xi \mapsto [\chi\,\pi_Z^*\xi]$ satisfies $i^* \circ s = \id$, and by the Poincar\'e lemma along the fibres of the vector bundle $\pi_Z: N \to Z$, $s \circ i^*$ is continuously cochain homotopic to the identity on $\Omega_c^{n-\bullet}(X)/\Omega_c^{n-\bullet}(U)$. Passing to continuous duals, the transpose of $i^*: \Omega_c^{n-\bullet}(X)/\Omega_c^{n-\bullet}(U) \to \Omega_c^{n-\bullet}(Z)$ is precisely
\[
    i_*: \widehat{\Omega}_Z^\bullet(Z)[-c] \to \left(\Omega_c^{n-\bullet}(X)/\Omega_c^{n-\bullet}(U)\right)^\vee = \Gamma_Z(X, \widehat{\Omega}_X^\bullet),
\]
which is therefore a cochain homotopy equivalence with homotopy inverse $\mathcal{T} \mapsto \pi_{Z,*}(\chi\mathcal{T})$, where $\pi_{Z,*}(\chi\mathcal{T})(\xi) := \mathcal{T}(\chi\,\pi_Z^*\xi)$ for $\xi \in \Omega_c^{n-p}(Z)$.

Third, to see that $\iota: \Gamma_Z(X, \widehat{\Omega}_X^\bullet) \to C(j^*\widehat{\Omega}_X^\bullet)$ is a quasi-isomorphism, compare the long exact cohomology sequence of the cocone $C(j^*\widehat{\Omega}_X^\bullet)$ with the $\mathbb{C}$-linear dual of the long exact cohomology sequence of $0 \to \Omega_c^{n-\bullet}(U) \to \Omega_c^{n-\bullet}(X) \to \Omega_c^{n-\bullet}(X)/\Omega_c^{n-\bullet}(U) \to 0$:
\[
\begin{tikzcd}[column sep=small]
    \cdots \arrow[r] & H^{p-1}(\widehat{\Omega}_X^\bullet) \arrow[r, "j^*"] \arrow[d, "\wr"] & H^{p-1}(\widehat{\Omega}_U^\bullet) \arrow[r] \arrow[d, "\wr"] & H^p(C(j^*\widehat{\Omega}_X^\bullet)) \arrow[r] & H^p(\widehat{\Omega}_X^\bullet) \arrow[r] \arrow[d, "\wr"] & \cdots \\
    \cdots \arrow[r] & H_c^{n-p+1}(X)^\vee \arrow[r, "j_!^\vee"] & H_c^{n-p+1}(U)^\vee \arrow[r] & H^p(\Gamma_Z(X, \widehat{\Omega}_X^\bullet)) \arrow[u, "\iota"] \arrow[r] & H_c^{n-p}(X)^\vee \arrow[r] & \cdots
\end{tikzcd}
\]
where the bottom row uses the isomorphism $H^p(\Gamma_Z(X, \widehat{\Omega}_X^\bullet)) \cong H^{p-c}(\widehat{\Omega}_Z^\bullet(Z)) \cong H_c^{n-p}(Z)^\vee \cong H^{n-p}(\Omega_c^\bullet(X)/\Omega_c^\bullet(U))^\vee$ established above, and the vertical maps on $X$ and $U$ are isomorphisms by de Rham's duality theorem for currents (\cite[Chapter IV §22-23]{dR84}). By the Five Lemma, $\iota$ is a quasi-isomorphism.
\end{proof}

\begin{corollary}[Explicit residue formula for locally integrable forms]\label{cor:explicit_residue}
Suppose a cohomology class $[(\theta, \omega)] \in H^p(C(j^*\Omega_X^\bullet))$ is represented by $\theta \in \Omega_U^{p-1}(U)$ and $\omega \in \Omega_X^p(X)$ (satisfying $j^*\omega = d\theta$ on $U$ and $d\omega = 0$ on $X$) such that $\theta$ is locally integrable on $X$ ($\theta \in L^1_{\mathrm{loc}}(X)$), or more generally such that the current $\mathcal{T}_{\theta,U} \in \widehat{\Omega}_U^{p-1}(U)$ extends to a current $\mathcal{T}_\theta \in \widehat{\Omega}_X^{p-1}(X)$ on $X$ with $j^*\mathcal{T}_\theta = \mathcal{T}_{\theta,U}$. Then:
\begin{enumerate}
    \item The current $\mathcal{T}_\omega - d\mathcal{T}_\theta \in \widehat{\Omega}_X^p(X)$ is closed and supported in $Z$, i.e., $\mathcal{T}_\omega - d\mathcal{T}_\theta \in \Gamma_Z(X, \widehat{\Omega}_X^p)$.
    \item In $H^p(C(j^*\widehat{\Omega}_X^\bullet))$, we have
    \[
        [(\mathcal{T}_{\theta,U}, \mathcal{T}_\omega)] = \iota\bigl([\mathcal{T}_\omega - d\mathcal{T}_\theta]\bigr).
    \]
    \item Under the topological residue map $\res: H^{p-1}(\Omega_U^\bullet(U)) \to H^p(C(j^*\Omega_X^\bullet)) \cong H^p(\Gamma_Z(X, \widehat{\Omega}_X^\bullet))$ (which sends a closed form $[\theta]$ to $-[(\theta, 0)]$ by $(\bigtriangleup_{C(j^*)})$), if $\theta \in \Omega_U^{p-1}(U) \cap L^1_{\mathrm{loc}}(X)$ is closed on $U$ ($d\theta = 0$), then
    \[
        \res([\theta]) = [d\mathcal{T}_\theta] \in H^p(\Gamma_Z(X, \widehat{\Omega}_X^\bullet)) \cong H^{p-c}(\widehat{\Omega}_Z^\bullet(Z)).
    \]
    More generally, if $d\theta = j^*\omega$ for a smooth closed form $\omega \in \Omega_X^p(X)$ (so $\mathcal{T}_{d\theta} = \mathcal{T}_\omega$), then the relative class $-[(\theta, \omega)] \in H^p(C(j^*\Omega_X^\bullet))$ corresponds under $\iota^{-1}$ to $[d\mathcal{T}_\theta - \mathcal{T}_{d\theta}]$.
\end{enumerate}
\end{corollary}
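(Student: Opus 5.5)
The argument works at the level of the current cocone $C(j^*\widehat{\Omega}_X^\bullet)$ and then transfers everything back through the quasi-isomorphisms of Theorem \ref{residue currents}. The guiding idea is that the extension $\mathcal{T}_\theta$ gives an explicit coboundary in the cocone, identifying $(\mathcal{T}_{\theta,U},\mathcal{T}_\omega)$ with a class in the image of $\iota$.

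\textbf{Part (1).} We compute $j^*(\mathcal{T}_\omega - d\mathcal{T}_\theta)$. Since restriction of currents commutes with $d$ (it is the transpose of $j_!$, which commutes with $d$), this equals $\mathcal{T}_{j^*\omega,U} - d\mathcal{T}_{\theta,U}$. On $U$ the form $\theta$ is smooth, so Stokes' theorem on the inclusion $\mathcal{T}_{-}$ gives $d\mathcal{T}_{\theta,U} = \mathcal{T}_{d\theta,U} = \mathcal{T}_{j^*\omega,U}$. The difference therefore vanishes, i.e. the current lies in $\Gamma_Z(X,\widehat{\Omega}_X^p)$. It is closed because $d\mathcal{T}_\omega = \mathcal{T}_{d\omega} = 0$ and $d^2 = 0$.

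\textbf{Part (2).} We exhibit the coboundary directly. Consider $(0,\mathcal{T}_\theta) \in C(j^*\widehat{\Omega}_X^\bullet)^{p-1}$. Its differential is
\[
d(0,\mathcal{T}_\theta) = (j^*\mathcal{T}_\theta, d\mathcal{T}_\theta) = (\mathcal{T}_{\theta,U}, d\mathcal{T}_\theta).
\]
Hence
\[
(\mathcal{T}_{\theta,U},\mathcal{T}_\omega) - d(0,\mathcal{T}_\theta) = (0, \mathcal{T}_\omega - d\mathcal{T}_\theta) = \iota(\mathcal{T}_\omega - d\mathcal{T}_\theta),
\]
which proves the stated equality in cohomology.

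\textbf{Part (3).} By the cocone triangle $(\bigtriangleup_{C(j^*)})$, $\res([\theta]) = -[(\theta,0)]$. Applying part (2) with $\omega = 0$ (legitimate, since $d\theta = 0 = j^*0$), and transporting through the quasi-isomorphism $\mathcal{T}_-$ of Theorem \ref{residue currents}, gives
\[
-[(\mathcal{T}_{\theta,U},0)] = -\iota[-d\mathcal{T}_\theta] = \iota[d\mathcal{T}_\theta].
\]
Since $\iota$ is an isomorphism on cohomology by Theorem \ref{residue currents}, $\res([\theta]) = [d\mathcal{T}_\theta]$. The general statement is the same computation with $\omega$ retained: $-[(\theta,\omega)] \mapsto \iota[d\mathcal{T}_\theta - \mathcal{T}_\omega]$, and $\mathcal{T}_\omega = \mathcal{T}_{d\theta}$ by the hypothesis $d\theta = j^*\omega$. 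Here $\mathcal{T}_{d\theta}$ means the current of the smooth form $\omega$, not the distributional derivative of $\mathcal{T}_\theta$.

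\textbf{The only delicate point.} One must check that a locally integrable $\theta$ actually defines a current $\mathcal{T}_\theta$ on $X$ restricting to $\mathcal{T}_{\theta,U}$. This holds because $\eta \mapsto \int_X \theta\wedge\eta$ is continuous on $\Omega_c^{n-p+1}(X)$ by the $L^1_{\mathrm{loc}}$ bound, and it agrees with $\mathcal{T}_{\theta,U}$ on forms supported in $U$. Apart from this, the sign conventions must be checked to match the cocone differential $d(\theta,\omega) = (j^*\omega - d\theta, d\omega)$, as was done above.
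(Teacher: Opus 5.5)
Your proposal is correct and follows the paper's proof essentially step for step. Part (1) is the same computation of $j^*(\mathcal{T}_\omega - d\mathcal{T}_\theta)$, part (2) uses the same coboundary $d(0,\mathcal{T}_\theta) = (\mathcal{T}_{\theta,U}, d\mathcal{T}_\theta)$, and part (3) is deduced from (2) and $(\bigtriangleup_{C(j^*)})$; you additionally spell out the sign bookkeeping and the fact that an $L^1_{\mathrm{loc}}$ form defines an extending current, both of which the paper leaves implicit.
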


\begin{proof}
Since $\mathcal{T}_\theta \in \widehat{\Omega}_X^{p-1}(X)$ satisfies $j^*\mathcal{T}_\theta = \mathcal{T}_{\theta,U}$, we have $j^*(\mathcal{T}_\omega - d\mathcal{T}_\theta) = \mathcal{T}_{j^*\omega, U} - d_U(j^*\mathcal{T}_\theta) = \mathcal{T}_{d\theta, U} - \mathcal{T}_{d\theta, U} = 0$ on $U$, so $\mathcal{T}_\omega - d\mathcal{T}_\theta \in \Gamma_Z(X, \widehat{\Omega}_X^p)$, and $d(\mathcal{T}_\omega - d\mathcal{T}_\theta) = \mathcal{T}_{d\omega} - 0 = 0$.
Moreover, the element $(0, \mathcal{T}_\theta) \in \widehat{\Omega}_U^{p-2}(U) \oplus \widehat{\Omega}_X^{p-1}(X) = C(j^*\widehat{\Omega}_X^\bullet)^{p-1}$ has coboundary
\[
    d(0, \mathcal{T}_\theta) = (j^*\mathcal{T}_\theta, d\mathcal{T}_\theta) = (\mathcal{T}_{\theta,U}, d\mathcal{T}_\theta) \in C(j^*\widehat{\Omega}_X^\bullet)^p.
\]
Subtracting this coboundary from $(\mathcal{T}_{\theta,U}, \mathcal{T}_\omega)$ in $C(j^*\widehat{\Omega}_X^\bullet)^p$ yields
\[
    (\mathcal{T}_{\theta,U}, \mathcal{T}_\omega) - d(0, \mathcal{T}_\theta) = (0, \mathcal{T}_\omega - d\mathcal{T}_\theta) = \iota(\mathcal{T}_\omega - d\mathcal{T}_\theta),
\]
proving (2). Statement (3) follows immediately from (2) and $(\bigtriangleup_{C(j^*)})$, matching \cite[p.~368]{GH94}.
\end{proof}

\begin{remark}\label{delta distribution}
When $Z = \{p\}$ is a single point in an $n$-dimensional manifold $X$, the codimension is $c = n$. Taking $p = n$, the pushforward $i_*: \widehat{\Omega}_{\{p\}}^0(\{p\}) \hookrightarrow \Gamma_{\{p\}}(X, \widehat{\Omega}_X^n)$ identifies $\mathbb{C} \cong \widehat{\Omega}_{\{p\}}^0(\{p\})$ with the span of the Dirac delta current via $z \mapsto z\,\delta_{\{p\}}$. Thus, for a locally integrable closed $(n-1)$-form $\theta \in \Omega^{n-1}(X \setminus \{p\}) \cap L^1_{\mathrm{loc}}(X)$ (or one with $d\theta = j^*\omega$), the current equation $d\mathcal{T}_\theta - \mathcal{T}_\omega = z\,\delta_{\{p\}}$ on $X$ means precisely that $\res([\theta]) = z \in \mathbb{C} \cong H^n_{\{p\}}(X, \underline{\mathbb{C}})$.
\end{remark}

\subsection{Field of coefficients and the extended logarithm sheaf}\label{section Field of Coefficients}

\paragraph{Coordinates on $R_{\mathbb{K}}$ and orientation.}
Let $A = \mathbb{K}$ be a subfield of $\mathbb{C}$ containing the Galois closure $F^{\mathrm{Gal}} \subset \mathbb{R}$. By \cite[Corollary 3.10]{BKL18}, the exponential map $\delta_l \mapsto \exp(l) = \sum_{k \geq 0} \frac{l^{\otimes k}}{k!}$ induces an isomorphism $\exp^*: R_{\mathbb{K}} \xrightarrow{\sim} \widehat{\Sym}\, L_{\mathbb{K}}$. Together with a choice of $\mathbb{Z}$-basis $\mathbf{w}$ of $L$ and the embedding $J$ of \eqref{identification}, we obtain the commutative diagram of isomorphisms:
\[
\begin{tikzcd}[column sep=small,row sep=small]
    {\mathbb{K}[X_1^{\pm 1}, \dots, X_n^{\pm 1}]} \arrow[rr, hook] \arrow[dd, "\mathbf{w}", "\cong"'] && {\mathbb{K}\llbracket X_1-1, \dots, X_n-1 \rrbracket} \arrow[rr, "\cong", "{\exp^*}"'] \arrow[dd, "\mathbf{w}", "\cong"'] && {\widehat{\Sym}\,\mathbb{Z}^n \otimes \mathbb{K}} \arrow[rr, equals] \arrow[dd, "\mathbf{w}", "\cong"'] && {\mathbb{K}\llbracket \mathbf{z}_{\mathbf{w}} \rrbracket} \arrow[dd, "{J(\mathbf{w})}", "\cong"'] \\
    \\
    {\mathbb{K}[L]} \arrow[rr, hook] && R_{\mathbb{K}} \arrow[rr, "\cong", "{\exp^*}"'] && {\widehat{\Sym}\,L_{\mathbb{K}}} \arrow[rr, "\cong", "J"'] && {\mathbb{K}\llbracket \mathbf{z} \rrbracket},
\end{tikzcd}
\]
where under the bottom row $\delta_l \mapsto e^{\langle l, \mathbf{z} \rangle} = \exp\bigl(\sum_{i=1}^n J_i(l) z_i\bigr) \in \mathbb{K}\llbracket \mathbf{z} \rrbracket$. The choice of $J$ also trivializes the orientation sheaf $\lambda_L := \bigwedge^n L = H_n(L, \mathbb{Z}) \cong H_n(T, \mathbb{Z})$.

\paragraph{Cone-adapted multivariate Laurent series and $\Gamma$-action.}
Because the cone generating functions $\mathcal{G}_{\sigma_{\boldsymbol{\alpha}}}(\mathbf{t}, \mathbf{z})$ of Definition \ref{def: generating} have poles along the hyperplanes $\langle \alpha_j, \mathbf{z} \rangle = 0$, we enlarge $R_{\mathbb{K}} \cong \mathbb{K}\llbracket \mathbf{z} \rrbracket$ to a field of formal Laurent series in which a notion of monomial coefficient is well-defined, following Aparicio Monforte and Kauers \cite{MK13}.

\begin{definition}
Let $\leq_{\mathrm{lex}}$ be the standard \textit{lexicographic order} on $\mathbb{Z}^n \subset \mathbb{R}^n$, defined by $\mathbf{e} \leq_{\mathrm{lex}} \mathbf{e}'$ if $\mathbf{e} = \mathbf{e}'$ or $e'_{i_0} - e_{i_0} > 0$ for $i_0 := \min\{i \mid e'_i - e_i \neq 0\}$. Let $\mathscr{C}_{\mathrm{lex}}$ be the family of all line-free cones in $\mathbb{R}^n$ compatible with $\leq_{\mathrm{lex}}$ (that is, $\sigma \cap (- \sigma) = \{0\}$ and $\mathbf{e} \in \sigma \cap \mathbb{Z}^n \implies 0 \leq_{\mathrm{lex}} \mathbf{e}$).
\end{definition}

Note that every totally positive cone $\sigma \subset V_{>0} \cup \{0\}$ belongs to $\mathscr{C}_{\mathrm{lex}}$.

\begin{proposition}\label{prop:MK13_field}
Let $\mathbb{K}$ be a field of characteristic $0$. For any cone $\sigma \in \mathscr{C}_{\mathrm{lex}}$, define
\[
    \mathbb{K}_\sigma\llbracket \mathbf{z} \rrbracket := \left\{ \sum_{\mathbf{e} \in \sigma \cap \mathbb{Z}^n} a_{\mathbf{e}} \mathbf{z}^{\mathbf{e}} \;\middle|\; a_{\mathbf{e}} \in \mathbb{K} \right\}, \qquad \mathbb{K}_{\leq_{\mathrm{lex}}}\llbracket \mathbf{z} \rrbracket := \bigcup_{\sigma \in \mathscr{C}_{\mathrm{lex}}} \mathbb{K}_\sigma\llbracket \mathbf{z} \rrbracket.
\]
Then $\mathbb{K}_{\leq_{\mathrm{lex}}}\llbracket \mathbf{z} \rrbracket$ is an integral domain, and
\[
    \widetilde{R}_{\mathbb{K}} := \mathbb{K}_{\leq_{\mathrm{lex}}}(\!(\mathbf{z})\!) := \bigcup_{\mathbf{e} \in \mathbb{Z}^n} \mathbf{z}^{\mathbf{e}} \mathbb{K}_{\leq_{\mathrm{lex}}}\llbracket \mathbf{z} \rrbracket = \Frac\bigl(\mathbb{K}_{\leq_{\mathrm{lex}}}\llbracket \mathbf{z} \rrbracket\bigr)
\]
is a field.
\end{proposition}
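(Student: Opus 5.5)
The plan is to follow Aparicio Monforte--Kauers \cite{MK13}. There are three steps: show that $\mathbb{K}_{\leq_{\mathrm{lex}}}\llbracket \mathbf{z} \rrbracket$ is a ring, show that it has no zero divisors, and show that every nonzero element becomes invertible after multiplying by a monomial.

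\textbf{Ring structure.} Let $\sigma, \tau \in \mathscr{C}_{\mathrm{lex}}$. We must show that $\sigma + \tau$ is again a line-free cone compatible with $\leq_{\mathrm{lex}}$. Compatibility is clear, since a sum of two lex-nonnegative vectors is lex-nonnegative. For line-freeness, and for what follows, I will restrict $\mathscr{C}_{\mathrm{lex}}$ to closed, finitely generated (polyhedral) cones, which I take to be the intended meaning. Such a cone $\sigma$ is line-free and compatible with $\leq_{\mathrm{lex}}$ exactly when every one of its generators is lex-positive; this uses that lex-positivity of a real vector is detected by its first nonzero coordinate. Hence $\sigma+\tau$ is generated by lex-positive vectors, and so it is line-free. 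The family is therefore directed under inclusion, and sums of two series can be computed in the common cone $\sigma+\tau$.

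For products, $\sum_{\mathbf{e}+\mathbf{e}'=\mathbf{f}} a_{\mathbf{e}} b_{\mathbf{e}'}$ must be a finite sum. The key lemma I would prove is that for a pointed polyhedral cone $\sigma$ there is a linear form $\ell$ that is strictly positive on $\sigma\setminus\{0\}$. Consequently $\{\mathbf{e}\in\sigma\cap\mathbb{Z}^n : \ell(\mathbf{e})\le C\}$ is finite. Applying this to $\sigma+\tau$ gives finiteness of each coefficient sum, and shows that the product lies in $\mathbb{K}_{\sigma+\tau}\llbracket\mathbf{z}\rrbracket$.

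\textbf{Integral domain.} Since $\leq_{\mathrm{lex}}$ is a total order on $\mathbb{Z}^n$ compatible with addition, each nonzero series has a well-defined lex-minimal exponent. This minimum exists because the support lies in $\sigma\cap\mathbb{Z}^n$, which is well-ordered by $\leq_{\mathrm{lex}}$; this is the standard fact in \cite{MK13}, and it follows from the finiteness above by induction on coordinates. The leading term of a product is the product of the leading terms, so the ring has no zero divisors.

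\textbf{Field.} First, $\widetilde{R}_{\mathbb{K}}$ is a ring, because $\mathbf{z}^{\mathbf{e}}\mathbb{K}_\sigma\llbracket \mathbf{z} \rrbracket\cdot\mathbf{z}^{\mathbf{e}'}\mathbb{K}_\tau\llbracket \mathbf{z} \rrbracket\subset \mathbf{z}^{\mathbf{e}+\mathbf{e}'}\mathbb{K}_{\sigma+\tau}\llbracket \mathbf{z} \rrbracket$. Now take a nonzero $f$, normalise so that its leading term is $1$, and write $f=\mathbf{z}^{\mathbf{m}}(1-g)$, where the support of $g$ lies in $(\sigma-\mathbf{m})\setminus\{0\}$. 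Every exponent in this set is lex-positive.

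The main obstacle is to show that $\sum_k g^k$ converges, that is, that all the $g^k$ are supported in a single cone of $\mathscr{C}_{\mathrm{lex}}$ with finite coefficient sums. The natural cone is the one generated by $\sigma$ and $-\mathbf{m}$, and it must be shown to be line-free. I would argue this exactly as \cite{MK13} do, through the characterisation by generators. The finitely many lex-positive exponents in the support of $g$ that are lex-minimal, together with the generators of $\sigma$, generate a polyhedral cone $\sigma'$. This $\sigma'$ has only lex-positive generators, so it lies in $\mathscr{C}_{\mathrm{lex}}$ and contains the support of $g$. The degree function $\ell$ for $\sigma'$ is then bounded below by some $c>0$ on the nonzero lattice points of the support of $g$. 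Hence only finitely many $k$ contribute to a given exponent, and $(1-g)^{-1}=\sum_k g^k\in\mathbb{K}_{\sigma'}\llbracket\mathbf{z}\rrbracket$. This gives $f^{-1}=\mathbf{z}^{-\mathbf{m}}(1-g)^{-1}\in\widetilde{R}_{\mathbb{K}}$.

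Finally, every element of $\widetilde{R}_{\mathbb{K}}$ has the form $\mathbf{z}^{\mathbf{e}}h$, and $\mathbf{z}^{\mathbf{e}}$ is a quotient of two monomials lying in $\mathbb{K}_{\leq_{\mathrm{lex}}}\llbracket \mathbf{z} \rrbracket$. Hence $\widetilde{R}_{\mathbb{K}}=\Frac(\mathbb{K}_{\leq_{\mathrm{lex}}}\llbracket \mathbf{z} \rrbracket)$.
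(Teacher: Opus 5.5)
Your overall architecture matches the argument of \cite{MK13}, which the paper simply cites: directedness of $\mathscr{C}_{\mathrm{lex}}$ via $\sigma+\tau$, a positive linear form $\ell$, leading terms for the domain property, and inverting $1-g$ by a geometric series. The gap is in the step you yourself flag as the main obstacle, namely producing a cone $\sigma'\in\mathscr{C}_{\mathrm{lex}}$ that contains $\supp(g)$.

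Since $\leq_{\mathrm{lex}}$ is total, ``the lex-minimal exponents of $\supp(g)$'' is a single vector $\mathbf{e}_0$. The cone generated by $\sigma$ and $\mathbf{e}_0$ need not contain $\supp(g)$. Take $\sigma=\mathbb{R}_{\ge0}^2$ and $f=z_2^3+z_1z_2^2+z_1^2$. Then $\mathbf{m}=(0,3)$, $g=-z_1z_2^{-1}-z_1^2z_2^{-3}$ and $\mathbf{e}_0=(1,-1)$. The cone generated by $(1,0),(0,1),(1,-1)$ is $\{a\ge0,\ a+b\ge0\}$, which misses $(2,-3)\in\supp(g)$.

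Perhaps you meant the minimal elements for the partial order $\mathbf{a}\preceq\mathbf{b}\iff\mathbf{b}-\mathbf{a}\in\sigma$. In that case, the finiteness of this set and the covering $\supp(g)\subseteq F+\sigma$ are exactly the nontrivial content, and you assert them without proof. They would follow from Gordan's lemma for the rational cone $\sigma$ together with Dickson's lemma. Separately, your first candidate, the cone generated by $\sigma$ and $-\mathbf{m}$, contains $\pm\mathbf{m}$, so it is never line-free when $\mathbf{m}\neq0$.

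Here is a direct way to close the gap. For $i=1,\dots,n$ put $P_i:=(\sigma-\mathbf{m})\cap\{u_1=\dots=u_{i-1}=0,\ u_i\ge1\}$.
\begin{itemize}
\item Every lex-positive lattice point of $\sigma-\mathbf{m}$ lies in some $P_i$, because integrality upgrades $u_i>0$ to $u_i\ge1$.
\item Each nonempty $P_i$ is a rational polyhedron whose recession cone lies in the pointed cone $\sigma$. So $P_i$ contains no line, and $P_i=\operatorname{conv}(V_i)+\operatorname{rec}(P_i)$ for its finite vertex set $V_i$.
\item Every vertex satisfies $u_1=\dots=u_{i-1}=0$ and $u_i\ge1$, so it is lex-positive.
\end{itemize}
Let $\sigma'$ be the cone generated by $\bigcup_i V_i$ together with the generators of $\sigma$. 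It has only rational lex-positive generators, so it lies in $\mathscr{C}_{\mathrm{lex}}$, and it contains $\supp(g)$. From there your argument with $\ell\ge c>0$ on $(\sigma'\cap\mathbb{Z}^n)\setminus\{0\}$ goes through.

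Two smaller points.
\begin{itemize}
\item Your characterisation ``line-free and compatible iff all generators are lex-positive'' needs rational generators, as in the convention of \cite{MK13}, and you should say so. The ray through $(-1,\sqrt2)$ has no nonzero lattice points, so it is compatible and line-free even though its generator is lex-negative.
\item The existence of lex-minimal exponents does not follow from the finiteness of $\{\ell\le C\}$, because lex initial segments of $\sigma\cap\mathbb{Z}^n$ can be infinite. For example, $(0,b)\le_{\mathrm{lex}}(1,0)$ for every $b\ge0$ in the orthant. It does follow from the same polyhedral induction on coordinates used above.
\end{itemize}
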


\begin{proof}
See \cite[Theorem 15]{MK13}.
\end{proof}

When $\mathbb{K} \supset F^{\mathrm{Gal}}$, the action of $\gamma \in \Gamma \subset \mathcal{O}_F^{\times,+}$ on $L_{\mathbb{K}} \cong \mathbb{K}^n$ is diagonal in the coordinates $\mathbf{z} = (z_1, \dots, z_n)$, given by $\gamma \cdot z_i = J_i(\gamma) z_i$. Consequently, $\gamma \in \Gamma$ acts on any series in $\widetilde{R}_{\mathbb{K}}$ by rescaling each monomial coefficient without altering the exponent $\mathbf{e} \in \mathbb{Z}^n$:
\[
    \gamma \cdot \sum_{\mathbf{e} \in \mathbb{Z}^n} a_{\mathbf{e}} z_1^{e_1} \cdots z_n^{e_n} = \sum_{\mathbf{e} \in \mathbb{Z}^n} \left(\prod_{i=1}^n J_i(\gamma)^{e_i}\right) a_{\mathbf{e}} z_1^{e_1} \cdots z_n^{e_n}.
\]
Thus every cone ring $\mathbb{K}_\sigma\llbracket \mathbf{z} \rrbracket$ and the field $\widetilde{R}_{\mathbb{K}}$ are $\Gamma$-invariant. By Dirichlet's Unit Theorem, $\prod_{i=1}^n J_i(\gamma)^{e_i} = 1$ for all $\gamma \in \Gamma$ if and only if $e_1 = \dots = e_n$, so the diagonal norm projection
\begin{equation}\label{eq:rho_N_def}
    \rho_N: \widetilde{R}_{\mathbb{K}} \twoheadrightarrow (\widetilde{R}_{\mathbb{K}})_\Gamma \cong \mathbb{K}(\!(w)\!), \qquad \sum_{\mathbf{e} \in \mathbb{Z}^n} a_{\mathbf{e}} z_1^{e_1} \cdots z_n^{e_n} \mapsto \sum_{k \in \mathbb{Z}} a_{(k, \dots, k)} w^k,
\end{equation}
is a well-defined $\mathbb{K}$-linear projection onto $\Gamma$-coinvariants extending $\rho_\Gamma: R_{\mathbb{K}} \cong \mathbb{K}\llbracket \mathbf{z} \rrbracket \twoheadrightarrow R_{\mathbb{K},\Gamma} \cong \mathbb{K}\llbracket w \rrbracket$ ($w := z_1 \cdots z_n$).

\paragraph{The extended logarithm sheaf and the topological polylogarithm.}
Write $R = R_{\mathbb{K}} \cong \mathbb{K}\llbracket \mathbf{z} \rrbracket$ and $\widetilde{R} = \widetilde{R}_{\mathbb{K}} = \mathbb{K}_{\leq_{\mathrm{lex}}}(\!(\mathbf{z})\!)$. Since the non-negative orthant $\mathbb{R}_{\geq 0}^n$ belongs to $\mathscr{C}_{\mathrm{lex}}$, we have a canonical $\Gamma$-equivariant injective ring homomorphism $R \hookrightarrow \widetilde{R}$, which makes $\widetilde{R}$ an $R$-algebra and a $\mathbb{K}[L]$-module (where $l \in L$ acts by multiplication by $e^{\langle l, \mathbf{z} \rangle} \in R^\times$).

\begin{definition}\label{def:extended_log_sheaf}
We define the \textit{extended logarithm sheaf} $\widetilde{\mathcal{L}\mathrm{og}}$ on $T$ by extension of scalars from $\mathcal{L}\mathrm{og}$ along $R \hookrightarrow \widetilde{R}$:
\[
    \widetilde{\mathcal{L}\mathrm{og}} := \mathcal{L}\mathrm{og} \otimes_{\underline{R}} \underline{\widetilde{R}}.
\]
\end{definition}

\begin{proposition}\label{prop: vanishing coh}
Let $A = \mathbb{K}$ be a subfield of $\mathbb{C}$ containing $F^{\mathrm{Gal}}$, and fix the orientation of $T$ induced by $J$ so that $\lambda_L$ is trivialized. Then
\[
    H^k(T, \widetilde{\mathcal{L}\mathrm{og}}) = 0 \quad \text{for all } k \geq 0.
\]
\end{proposition}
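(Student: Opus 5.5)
The plan is to reduce the computation to the group cohomology of $L \cong \mathbb{Z}^n$ with coefficients in $\widetilde{R}$, and then use that $\widetilde{R}$ is a field in which each $\delta_l - 1$ becomes a unit.

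\textbf{Reduction to group cohomology.} The torus $T = V/L$ is a $K(L,1)$ with universal cover $V$ contractible. $\widetilde{\mathcal{L}\mathrm{og}}$ is the locally constant sheaf attached to the $L$-module $\widetilde{R}$, on which $l$ acts by multiplication by $\delta_l = e^{\langle l,\mathbf{z}\rangle}$ (Definition \ref{def log sheaf}, after tensoring with $\widetilde{R}$). Hence
\[
H^k(T,\widetilde{\mathcal{L}\mathrm{og}}) \cong H^k(L,\widetilde{R}).
\]
Choose the $\mathbb{Z}$-basis $w_1,\dots,w_n$ of $L$ and compute with the Koszul resolution of $\mathbb{Z}$ over $\mathbb{Z}[L]=\mathbb{Z}[x_1^{\pm1},\dots,x_n^{\pm1}]$. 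This identifies $H^\bullet(L,\widetilde{R})$ with the cohomology of the Koszul complex $K^\bullet(\widetilde{R}; u_1,\dots,u_n)$, where
\[
u_i := \delta_{w_i}-1 = e^{\langle w_i,\mathbf{z}\rangle}-1.
\]

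\textbf{Units.} Each $u_i$ is a nonzero element of $R\cong\mathbb{K}\llbracket\mathbf{z}\rrbracket$. Indeed $\langle w_i,\mathbf{z}\rangle = \sum_j J_j(w_i)z_j$ is a nonzero linear form, since $w_i\neq 0$ and $J$ is injective. Its exponential minus $1$ is $\langle w_i,\mathbf{z}\rangle\cdot(1+\text{higher order terms})$, which is nonzero. Since $R\hookrightarrow\widetilde{R}$ is injective and $\widetilde{R}$ is a field (Proposition \ref{prop:MK13_field}), $u_i$ is a unit in $\widetilde{R}$. One could also see this directly: $\langle w_i,\mathbf{z}\rangle$ is invertible in $\widetilde{R}$ because its lex-leading monomial has a nonzero coefficient.

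\textbf{Acyclicity.} A Koszul complex on a sequence containing a unit is acyclic, because multiplication by $u_1$ acts on its cohomology both as zero and as an isomorphism. Equivalently, use the Hochschild--Serre / Künneth spectral sequence for $L=\mathbb{Z}w_1\oplus L'$. Here $H^\bullet(\mathbb{Z}w_1,\widetilde{R})$ is the cohomology of $\widetilde{R}\xrightarrow{u_1}\widetilde{R}$, which vanishes since $u_1$ is invertible. Hence $H^k(L,\widetilde{R})=0$ for all $k$, including $k=0$, where the invariants $\{r : u_i r = 0\}$ vanish.

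\textbf{Main obstacle.} The only delicate point is the first identification: the sheaf $\widetilde{\mathcal{L}\mathrm{og}}=\mathcal{L}\mathrm{og}\otimes_{\underline R}\underline{\widetilde R}$ must really be the local system with fibre $\widetilde R$ and monodromy $\delta_l$. This holds because $\mathcal{L}\mathrm{og}$ is a locally free rank-one $\underline R$-module with monodromy multiplication by $\delta_l$, so extension of scalars commutes with taking the associated local system. Note also that the orientation choice and $\mathbb{K}\supset F^{\mathrm{Gal}}$ enter only to make the coordinates $\mathbf{z}$ and the embedding $R\hookrightarrow\widetilde R$ well defined; they play no further role in the vanishing.
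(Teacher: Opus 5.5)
Your argument is correct, but it follows a different route from the paper.

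\textbf{The paper's route.} The paper never passes to group cohomology. It proceeds in four steps:
\begin{enumerate}
\item It shows that $\widetilde{R}$ is flat over $R$, by factoring $R \hookrightarrow \Frac(R) \hookrightarrow \widetilde{R}$.
\item It deduces $H^k(T,\widetilde{\mathcal{L}\mathrm{og}}) \cong H^k(T,\mathcal{L}\mathrm{og}) \otimes_R \widetilde{R}$.
\item It imports from [BKL18, Theorem 3.25] that $H^k(T,\mathcal{L}\mathrm{og})$ vanishes for $k \neq n$ and equals $R/I = \mathbb{K}$ for $k = n$.
\item It kills the top degree using $\mathbb{K} \otimes_R \widetilde{R} = \widetilde{R}/I\widetilde{R} = 0$.
\end{enumerate}

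\textbf{What is shared and what differs.} The decisive input is the same in both proofs: the augmentation ideal contains nonzero elements such as $\delta_{w_i} - 1$, and these become units in the field $\widetilde{R}$. You apply this fact directly to the Koszul complex computing $H^\bullet(L,\widetilde{R})$. As a result you need neither the flatness of $\widetilde{R}$ over $R$ nor the BKL computation of $H^\bullet(T,\mathcal{L}\mathrm{og})$. That computation could itself be obtained from the same Koszul complex, with $u_1,\dots,u_n$ now a regular sequence in $R$.

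\textbf{What each route buys.} Your route is more self-contained and more general. It gives vanishing for any coefficient module on which a single $\delta_l - 1$ acts invertibly, whether or not that module is flat over $R$. The sign convention for the monodromy ($\delta_l$ versus $\delta_{-l}$) is immaterial, since $\delta_{-l} - 1 = -\delta_{-l}(\delta_l - 1)$ is also a unit. The paper's route instead exhibits $H^\bullet(-,\widetilde{\mathcal{L}\mathrm{og}})$ as a flat base change of $H^\bullet(-,\mathcal{L}\mathrm{og})$. This keeps the comparison with the BKL localization sequence for $\mathcal{L}\mathrm{og}$, used in the subsequent corollary on $T \setminus D$, transparent.
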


\begin{proof}
We first show that $\widetilde{R}$ is a flat $R$-module. Since $R \cong \mathbb{K}\llbracket \mathbf{z} \rrbracket$ is an integral domain and $\widetilde{R}$ is a field containing $R$, the universal property of the field of fractions $\Frac(R)$ implies that the inclusion $R \hookrightarrow \widetilde{R}$ factors as
\[
    R \hookrightarrow \Frac(R) \hookrightarrow \widetilde{R}.
\]
The localization $\Frac(R)$ is flat over $R$, and $\widetilde{R}$ is a vector space over the field $\Frac(R)$ and hence is free (and flat) over $\Frac(R)$. By transitivity of flatness, $\widetilde{R}$ is flat over $R$.

Since $\widetilde{\mathcal{L}\mathrm{og}} = \mathcal{L}\mathrm{og} \otimes_{\underline{R}} \underline{\widetilde{R}}$ and $\widetilde{R}$ is flat over $R$, we have
\[
    H^k(T, \widetilde{\mathcal{L}\mathrm{og}}) \cong H^k(T, \mathcal{L}\mathrm{og}) \otimes_R \widetilde{R} \quad \text{for all } k \geq 0.
\]
By \cite[Theorem 3.25]{BKL18}, $H^k(T, \mathcal{L}\mathrm{og}) = 0$ for $k \neq n$ and $H^n(T, \mathcal{L}\mathrm{og}) \cong R/IR = \mathbb{K}$. Thus $H^k(T, \widetilde{\mathcal{L}\mathrm{og}}) = 0$ for $k \neq n$, while for $k = n$ we have
\[
    H^n(T, \widetilde{\mathcal{L}\mathrm{og}}) \cong \mathbb{K} \otimes_R \widetilde{R} = 0,
\]
since $\mathbf{z} \in R$ acts trivially on the left but not on the right.

\end{proof}

\begin{corollary}\label{cor: localization coh}
Let $D \subset T$ be a finite non-empty subset. Then $H^i(T \setminus D, \widetilde{\mathcal{L}\mathrm{og}}) = H^i(T \setminus D, \mathcal{L}\mathrm{og}) = 0$ for $i \neq n-1$. For $\mathcal{L}\mathrm{og}$, the localization sequence yields a short exact sequence
\[
    0 \longrightarrow H^{n-1}(T \setminus D, \mathcal{L}\mathrm{og}) \xrightarrow{\res} \mathcal{L}\mathrm{og}|_D \xrightarrow{\sigma_D} \mathbb{K} \longrightarrow 0,
\]
whereas for $\widetilde{\mathcal{L}\mathrm{og}}$, the vanishing of $H^n(T, \widetilde{\mathcal{L}\mathrm{og}}) = 0$ reduces the localization sequence to a canonical isomorphism
\[
    H^{n-1}(T \setminus D, \widetilde{\mathcal{L}\mathrm{og}}) \xrightarrow[\cong]{\res} \widetilde{\mathcal{L}\mathrm{og}}|_D = \bigoplus_{d \in D} \widetilde{\mathcal{L}\mathrm{og}}_d.
\]
\end{corollary}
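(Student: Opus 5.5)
The plan is to feed the cohomology computations for $T$ (\cite[Theorem 3.25]{BKL18} for $\mathcal{L}\mathrm{og}$, and Proposition \ref{prop: vanishing coh} for $\widetilde{\mathcal{L}\mathrm{og}}$) into the localization sequence of Section \ref{sec:currents}, taken with $X = T$, $Z = D$, $U = T \setminus D$, and coefficients $\mathcal{F} = \mathcal{L}\mathrm{og}$ or $\widetilde{\mathcal{L}\mathrm{og}}$. What has to be computed first is the local cohomology $H^p_D(T, \mathcal{F})$. Since $D$ is finite and discrete, excision gives $H^p_D(T,\mathcal{F}) \cong \bigoplus_{d \in D} H^p_{\{d\}}(B_d, \mathcal{F}|_{B_d})$, where $B_d$ is a small contractible ball around $d$. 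On $B_d$ the local system $\mathcal{F}$ is constant with stalk $\mathcal{F}_d$, so $H^p_{\{d\}}(B_d, \mathcal{F}) \cong H^p_{\{0\}}(\mathbb{R}^n, \underline{\mathbb{K}}) \otimes \mathcal{F}_d$. This group is $\mathcal{F}_d$ for $p = n$ and vanishes otherwise. The same conclusion follows from Theorem \ref{residue currents} with $c = n$, as in Remark \ref{delta distribution}. The identification with $\mathcal{F}_d$ uses the orientation of $T$ fixed via $J$. Hence $H^p_D(T,\mathcal{F}) = 0$ for $p \neq n$, and $H^n_D(T,\mathcal{F}) = \mathcal{F}|_D = \bigoplus_{d} \mathcal{F}_d$.

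Next, write out the long exact sequence
\[
\cdots \to H^{p}_D(T,\mathcal{F}) \to H^p(T,\mathcal{F}) \to H^p(T\setminus D,\mathcal{F}) \xrightarrow{\res} H^{p+1}_D(T,\mathcal{F}) \to \cdots.
\]
For $\mathcal{L}\mathrm{og}$, \cite[Theorem 3.25]{BKL18} gives $H^p(T,\mathcal{L}\mathrm{og}) = 0$ for $p \neq n$ and $H^n(T,\mathcal{L}\mathrm{og}) \cong \mathbb{K}$. If $p \leq n-2$, both neighbours of $H^p(T\setminus D,\mathcal{L}\mathrm{og})$ vanish, so it is zero. If $p \geq n$, the group $H^p(T\setminus D,\cdot)$ is zero for dimension reasons: $T \setminus D$ is a noncompact connected $n$-manifold, so $H^n$ of a local system on it vanishes. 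Alternatively, for $p=n$ one can use surjectivity of $H^n_D \to H^n$ below, and for $p > n$ the fact that $H^{p+1}_D = 0$. In degree $n-1$ the sequence reads
\[
0 \to H^{n-1}(T\setminus D,\mathcal{L}\mathrm{og}) \xrightarrow{\res} \mathcal{L}\mathrm{og}|_D \xrightarrow{\sigma_D} H^n(T,\mathcal{L}\mathrm{og}) \cong \mathbb{K} \to H^n(T\setminus D,\mathcal{L}\mathrm{og}) \to 0.
\]
Here $\sigma_D$ is the map $H^n_D \to H^n$.

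For $\widetilde{\mathcal{L}\mathrm{og}}$, Proposition \ref{prop: vanishing coh} makes every $H^p(T,\widetilde{\mathcal{L}\mathrm{og}})$ vanish, so $\res \colon H^p(T\setminus D,\widetilde{\mathcal{L}\mathrm{og}}) \to H^{p+1}_D(T,\widetilde{\mathcal{L}\mathrm{og}})$ is an isomorphism in every degree. The local computation then gives vanishing for $p \neq n-1$ and the isomorphism onto $\bigoplus_d \widetilde{\mathcal{L}\mathrm{og}}_d$ for $p = n-1$.

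The main obstacle is surjectivity of $\sigma_D$ in the untwisted case, which is equivalent to $H^n(T\setminus D,\mathcal{L}\mathrm{og}) = 0$. I would argue this by Poincaré duality: $H^n(T\setminus D,\mathcal{L}\mathrm{og}) \cong H_0^{BM}$-type coinvariants of the noncompact space, and these vanish because $T \setminus D$ has no compact component. A more concrete route is to note that $\sigma_D$ restricted to a single stalk $\mathcal{L}\mathrm{og}_d \cong R$ is the composite of $R \to R/I = \mathbb{K}$ with the identification $H^n(T,\mathcal{L}\mathrm{og}) \cong R/I$ of \cite{BKL18}, hence surjective since $D \neq \emptyset$. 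Some care is also needed with the limit over $k$ in $R = \varprojlim_k$. I would handle it by working with the pro-system of the unipotent sheaves $\mathcal{L}\mathrm{og}^{(k)}$, which satisfy Mittag-Leffler, before passing to the limit.
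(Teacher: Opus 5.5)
Your proposal is correct and follows the paper's route. It uses the localization sequence for $D\subset T$, with excision and purity on small balls around each $d\in D$ so that $H^\bullet_D(T,\mathcal{F})$ is $\bigoplus_{d\in D}\mathcal{F}_d$ concentrated in degree $n$, and then \cite[Theorem 3.25]{BKL18} for $\mathcal{L}\mathrm{og}$ and Proposition~\ref{prop: vanishing coh} for $\widetilde{\mathcal{L}\mathrm{og}}$. You also correctly justify the surjectivity of $\sigma_D$, i.e.\ $H^n(T\setminus D,\mathcal{L}\mathrm{og})=0$, which the paper's one-line proof leaves implicit.

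Two minor points. For $n=1$ and $|D|\geq 2$ the space $T\setminus D$ is disconnected, so phrase the dimension argument as \emph{$T\setminus D$ has no compact component}, as you do later. The pro-system/Mittag-Leffler precaution is unnecessary, because $\mathcal{L}\mathrm{og}=\underline{R}\otimes_{\underline{A[L]}}\pi_!\underline{A}$ is already an honest locally constant sheaf with stalk $R$.
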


\begin{proof}
This follows immediately from Proposition \ref{prop: vanishing coh} and the localization sequence for $D \subset T$, using excision and purity on small contractible neighbourhoods $U_d$ of $d \in D$ to identify $H^n_D(T, \widetilde{\mathcal{L}\mathrm{og}}) \cong \bigoplus_{d \in D} H^n_{\{d\}}(U_d, \widetilde{\mathcal{L}\mathrm{og}}|_{U_d}) \cong \widetilde{\mathcal{L}\mathrm{og}}|_D$.
\end{proof}

\begin{definition}\label{def polylog}
Taking $D = \{0\}$, the \textit{topological polylogarithm} $\pol \in H^{n-1}(T \setminus \{0\}, \Gamma; \widetilde{\mathcal{L}\mathrm{og}})$ is the unique class satisfying the residue condition
\[
    \res(\pol) = \delta_0 = 1 \in \widetilde{\mathcal{L}\mathrm{og}}_0 = \widetilde{R}.
\]
\end{definition}

\begin{remark}\label{BL 1 dimensional} The one dimensional case corresponds precisely to the contents of \cite[\S 2.3]{BL94}.
When the dimension $n = 1$, $\overset{\circ}{T} = \mathbb{S}^1 \setminus \{0\}$ and $\widetilde{R}_{\mathbb{R}} = \mathbb{R}(\!(z)\!)$. Under the trivialization $v \mapsto e^{-vz}$, horizontal sections of $\widetilde{\mathcal{L}\mathrm{og}}^\infty$ satisfy $(d - z\,dv)g = 0$, so $g(v) = c(z)e^{vz}$ for $v \in (0,1)$. By Corollary \ref{cor:explicit_residue} and Remark \ref{delta distribution}, for any test function $f \in C^\infty(\mathbb{S}^1)$ (with $f(0) = f(1)$),
\[
    \res(e^{vz})(f) = \int_0^1 e^{vz} f'(v)\,dv - \int_0^1 z e^{vz} f(v)\,dv = \bigl[e^{vz}f(v)\bigr]_0^1 = (e^z - 1)f(0),
\]
or with the orientation convention of \cite[\S 2]{BKL18}, $\res\bigl(\frac{e^{\{v\}z}}{1-e^z}\bigr) = \delta_0$. Thus $\pol(v) = \frac{e^{\{v\}z}}{1-e^z} \in \widetilde{R}_{\mathbb{R}}$ is precisely the generating function of the Hurwitz zeta values $\zeta(-k, v)$ of \ref{eq: hurwitz generating function}.
\end{remark}

\section{Shintani Decompositions and Cocycles}\label{sec:shintani}

\subsection{Shintani cone decompositions}\label{section Shintani cone decompositions}

Throughout this section we follow the conventions regarding Shintani decompositions from \cite{BHY19}. For a comparison with the conventions from \cite{CDG15}, see subsection \ref{subsection:CDG comparison}.

\begin{definition}
A \textit{rational closed polyhedral cone} in $V_{>0} \cup \{0\}$ (which we simply call a \textit{cone}) is a subset of the form
\[
    \sigma_{\boldsymbol{\alpha}} := \{x_1 \alpha_1 + \dots + x_m \alpha_m \mid x_1, \dots, x_m \in \mathbb{R}_{\geq 0}\}
\]
for some tuple $\boldsymbol{\alpha} = (\alpha_1, \dots, \alpha_m) \in L_{>0}^m$, called a generator of $\sigma_{\boldsymbol{\alpha}}$ (with $\sigma = \{0\}$ when $m=0$). The dimension $\dim \sigma$ is the dimension of the $\mathbb{R}$-linear span of $\sigma$. A cone $\sigma_{\boldsymbol{\alpha}}$ is \textit{simplicial} if $\alpha_1, \dots, \alpha_m$ are linearly independent over $\mathbb{R}$; any cone generated by a subset of $\{\alpha_1, \dots, \alpha_m\}$ is called a \textit{face} of $\sigma_{\boldsymbol{\alpha}}$.
\end{definition}

For any subset $R \subset V_{>0}$, we define its \textit{upper closure} by
\[
    \widehat{R} := \{(u_1, \dots, u_n) \in V_{>0} \mid \exists \delta > 0 \text{ such that } \forall \delta' \in (0, \delta),\, (u_1, \dots, u_{n-1}, u_n - \delta') \in R\}.
\]
If $\dim \sigma < n$, then $\widehat{\sigma} = \emptyset$; if $\dim \sigma = n$, $\widehat{\sigma}$ is obtained from $\sigma$ by removing its $(n-1)$-dimensional lower faces with respect to the last coordinate direction $(0, \dots, 0, 1)$.

A \textit{simplicial fan} $\Phi$ is a collection of simplicial cones such that every face of a cone in $\Phi$ belongs to $\Phi$, and the intersection $\sigma \cap \sigma'$ of any two cones in $\Phi$ is a common face of both.

\begin{definition}\label{def: Shintani}
A \textit{Shintani decomposition} of $V_{>0} \cup \{0\}$ (with respect to $\Gamma$) is a simplicial fan $\Phi$ satisfying:
\begin{enumerate}
    \item $V_{>0} \cup \{0\} = \coprod_{\sigma \in \Phi} \sigma^\circ$, where $\sigma^\circ$ denotes the relative interior of $\sigma$.
    \item For all $\sigma \in \Phi$ and $\gamma \in \Gamma$, we have $\sigma \cdot \gamma \in \Phi$.
    \item The quotient $\Phi/\Gamma$ is a finite set.
\end{enumerate}
\end{definition}

\begin{theorem}[Shintani's Unit Theorem]
A Shintani decomposition $\Phi$ always exists. Moreover, denoting by $\Phi_q$ the subset of cones of dimension $q$, the maximal cones $\Phi_n$ give a disjoint decomposition
\begin{equation}\label{eq: upper closure}
    V_{>0} = \coprod_{\sigma \in \Phi_n} \widehat{\sigma}.
\end{equation}
\end{theorem}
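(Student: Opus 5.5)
The statement has two parts: existence of a Shintani decomposition $\Phi$ in the sense of Definition \ref{def: Shintani}, and the disjoint decomposition \eqref{eq: upper closure} of $V_{>0}$ by upper closures. The first is the classical construction of Shintani (in the form of Colmez); the second is an elementary genericity argument once the fan is locally finite.

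\emph{Existence.} I would follow Shintani and Colmez and use the convex hull construction.
\begin{enumerate}
\item Let $P$ be the closed convex hull of $L \cap V_{>0}$ in $V$. It is $\Gamma$-stable, since $\Gamma$ preserves $L$ and $V_{>0}$.
\item Every ray in $V_{>0}$ meets $\partial P$ in exactly one point. The part of $\partial P$ lying in $V_{>0}$ is a locally finite union of compact polytopes with vertices in $L$: near any point, only finitely many lattice points can be extremal, because of the lower bound on the norm of totally positive lattice points.
\item Coning off these faces from the origin gives a $\Gamma$-stable, locally finite fan of rational polyhedral cones covering $V_{>0} \cup \{0\}$ with disjoint relative interiors.
\item Finiteness of this fan modulo $\Gamma$ comes from Dirichlet's unit theorem: $V^1/\Gamma$ is compact. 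Hence a compact fundamental domain in $V^1 \cong X$ meets only finitely many cones, by local finiteness.
\item To make the cones simplicial, I would subdivide $\Gamma$-equivariantly without adding generators outside $L$. One way is a barycentric subdivision using barycentres scaled into $L$, i.e.\ integer multiples of sums of generators. Another is to triangulate the faces of a set of orbit representatives compatibly and transport the result by $\Gamma$. Both preserve conditions 1–3.
\end{enumerate}
The main obstacle is the local finiteness of $\partial P$ in $V_{>0}$. This is where Colmez's argument is needed, and I would cite it rather than redo it.

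\emph{Upper closure decomposition.} Fix $u \in V_{>0}$ and let $e_n = (0,\dots,0,1)$.
\begin{enumerate}
\item By local finiteness, a small ball around $u$ meets only finitely many cones of $\Phi$.
\item Each cone of dimension $<n$ lies in a hyperplane $H$ spanned by elements of $L \subset F$. Such a hyperplane is the kernel of a functional $x \mapsto \sum_i J_i(\beta) x_i$ with $\beta \in F^\times$. Since $J_n(\beta) \neq 0$, $e_n \notin H$, so the segment $\{u - \delta' e_n\}$ crosses $H$ at most once.
\item Hence there is $\delta > 0$ such that $u - \delta' e_n$ avoids all lower-dimensional cones for every $\delta' \in (0,\delta)$.
\item By condition 1 of Definition \ref{def: Shintani}, each such point lies in the interior of a unique $\sigma \in \Phi_n$. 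The segment $\{u - \delta' e_n : \delta' \in (0,\delta)\}$ is connected and avoids the boundaries of maximal cones, so $\sigma$ is the same for all $\delta'$. Therefore $u \in \widehat{\sigma}$.
\item Uniqueness: if $u \in \widehat{\sigma} \cap \widehat{\sigma'}$, then for small $\delta'$ the point $u - \delta' e_n$ lies in $\sigma \cap \sigma'$ and outside every lower-dimensional cone. Since $\sigma \cap \sigma'$ is a common face of both, it must be $n$-dimensional, which forces $\sigma = \sigma'$.
\item Finally, cones with $\dim \sigma < n$ have $\widehat{\sigma} = \emptyset$, so only $\Phi_n$ contributes to \eqref{eq: upper closure}.
\end{enumerate}
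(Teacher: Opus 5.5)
\textbf{Verdict: the proposal is correct and takes a different route from the paper.} The paper proves nothing here; it simply cites \cite{Shi76}, \cite{Hid93} and \cite{Yam10} for both assertions.

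\textbf{The decomposition \eqref{eq: upper closure}.} Here you give a complete elementary argument. Its only real input is that $e_n$ lies in no hyperplane spanned by elements of $L$: such a hyperplane is $\ker(x\mapsto\sum_i J_i(\beta)x_i)$ with $\beta\in F^\times$, and $J_n(\beta)\neq 0$. The same transversality also justifies the paper's unproved remark that $\widehat{\sigma}=\emptyset$ when $\dim\sigma<n$.

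\textbf{Existence.} You replace the citations by the convex-hull construction followed by a canonical barycentric subdivision, with barycentres (sums of vertices) in $L_{>0}$. This gives $\Gamma$-equivariance and lattice generators for free. However, you still outsource the one substantive step, the local polyhedrality of $\partial P\cap V_{>0}$, so on that point you are on the same footing as the paper.

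\textbf{Two points to tighten.}
\begin{itemize}
\item The heuristic you give for local polyhedrality is not the operative one. The norm bound only keeps $P$ away from $0$. What you actually need is $P+\overline{V_{>0}}\subseteq P$, together with the fact that every supporting functional of $P$ at a point of $\partial P\cap V_{>0}$ has all coordinates strictly positive. If one coordinate vanished, multiplying a lattice point by units of $\Gamma$ that contract the other coordinates would push the functional to $0$ on $P$. Once positivity holds uniformly near a given point, only finitely many lattice points lie on the supporting hyperplanes there.
\item The decomposition \eqref{eq: upper closure} is used later for an arbitrary Shintani decomposition: in Proposition \ref{prop: partial zeta decomp}, and after the deformation in Remark \ref{rmk cone avoids t}. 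So the local finiteness in your first step should be derived from Definition \ref{def: Shintani} itself, not from your construction. It does follow: each $\sigma\cap X$ is compact, $\Gamma$ acts properly discontinuously on $X$, and $\Phi/\Gamma$ is finite.
\end{itemize}

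Of your two subdivision options, only the barycentric one is automatically compatible on faces shared by different $\Gamma$-translates. The ``triangulate representatives and transport'' option would need an extra compatibility argument.
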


\begin{proof}
See \cite[Theorem 1]{Shi76}, \cite[\S 2.7, Theorem 1]{Hid93}, and \cite[Theorem 4.1 and Proposition 5.6]{Yam10}.
\end{proof}

\subsection{Cone zeta functions and generating series}

\begin{definition}\label{def: shintani zeta}
Let $\sigma \subset V_{>0} \cup \{0\}$ be a cone and $\mathbf{t} \in V/L$ be a torsion point. The \textit{cone zeta function} $\zeta_\sigma(\mathbf{t}, \mathbf{s})$ is defined for $\mathbf{s} = (s_1, \dots, s_n) \in \mathbb{C}^n$ with $\Re(s_j) > 1$ by
\begin{equation}\label{eq: Shintani zeta}
    \zeta_\sigma(\mathbf{t}, \mathbf{s}) := \sum_{\alpha \in \widehat{\sigma} \cap (\mathbf{t} + L)} \prod_{j=1}^n \alpha_j^{-s_j},
\end{equation}
where $\alpha = (\alpha_1, \dots, \alpha_n) \in V_{>0}$. Along the diagonal $\mathbf{s} = (s, \dots, s)$, we have
\begin{equation}\label{eq: diagonal}
    \zeta_\sigma(\mathbf{t}, (s, \dots, s)) = \sum_{\alpha \in \widehat{\sigma} \cap (\mathbf{t} + L)} N(\alpha)^{-s}.
\end{equation}
\end{definition}

\begin{proposition}\label{prop: fac_shintani}
For any $n$-dimensional simplicial cone $\sigma = \sigma_{\boldsymbol{\alpha}}$ generated by $\boldsymbol{\alpha} = (\alpha_1, \dots, \alpha_n) \in L_{>0}^n$, the cone zeta function $\zeta_\sigma(\mathbf{t}, \mathbf{s})$ is a finite sum of Shintani zeta functions:
\[
    \zeta_\sigma(\mathbf{t}, \mathbf{s}) = \sum_{\mathbf{x} \in R_{\boldsymbol{\alpha}}^P} \zeta(\mathbf{s}, A_{\boldsymbol{\alpha}}, \mathbf{x}, 1),
\]
where $P_{\boldsymbol{\alpha}} := \{\sum_{i=1}^n x_i \alpha_i \mid 0 \leq x_i \leq 1\}$ is the fundamental parallelepiped of $\boldsymbol{\alpha}$, $R_{\boldsymbol{\alpha}}^P := \{\mathbf{x} \in [0,1]^n \mid \sum_{i=1}^n x_i \alpha_i \in \widehat{P}_{\boldsymbol{\alpha}} \cap (\mathbf{t} + L)\}$, and $A_{\boldsymbol{\alpha}}$ is the $n \times n$ matrix whose $i$-th row is $\alpha_i = (J_1(\alpha_i), \dots, J_n(\alpha_i))$.
\end{proposition}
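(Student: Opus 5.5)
The plan is to tile the upper closure $\widehat{\sigma}$ by lattice translates of the upper-closed fundamental parallelepiped, and then read off each translate class as a Shintani zeta function. Write $L_{\boldsymbol{\alpha}} := \mathbb{Z}\alpha_1 + \dots + \mathbb{Z}\alpha_n \subset L$. Since the $\alpha_i$ are linearly independent and lie in $L$, this is a finite-index sublattice. Every $v \in \sigma$ has unique coordinates $v = \sum_i y_i \alpha_i$ with $y_i \geq 0$. The first step is the set-theoretic claim
\[
    \widehat{\sigma} = \coprod_{\mathbf{m} \in \mathbb{N}^n} \Bigl(\widehat{P}_{\boldsymbol{\alpha}} + \sum_{i=1}^n m_i \alpha_i\Bigr),
\]
that is, every point of $\widehat{\sigma}$ is written uniquely as $p + \sum m_i\alpha_i$ with $p \in \widehat{P}_{\boldsymbol{\alpha}}$ and $\mathbf{m} \in \mathbb{N}^n$.

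To prove the claim, I would express the upper closure in cone coordinates. Let $e_n = \sum_i c_i \alpha_i$. A point $u$ lies in $\widehat{R}$ when $u - \delta' e_n \in R$ for all small $\delta' > 0$. In coordinates this moves $y_i$ to $y_i - \delta' c_i$. For $R = \sigma$, the condition is $y_i > 0$ for every $i$ with $c_i > 0$, and $y_i \geq 0$ otherwise. For $R = P_{\boldsymbol{\alpha}}$, the condition is $0 < y_i \leq 1$ for $c_i > 0$, $0 \leq y_i < 1$ for $c_i < 0$, and $0 \leq y_i \leq 1$ for $c_i = 0$. Here I need $c_i \neq 0$ for all $i$. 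This holds because otherwise $e_n$ would lie in a facet hyperplane spanned by totally positive vectors; that happens only for special cones, and I would handle it by the same genericity that underlies \eqref{eq: upper closure}, citing \cite{Yam10}. With $c_i \neq 0$, each coordinate ranges over a half-open unit interval. The decomposition $y_i = x_i + m_i$ is then unique with $x_i$ in the corresponding half-open interval $(0,1]$ or $[0,1)$ and $m_i \in \mathbb{N}$. This proves the claim, and the map $(\mathbf{x}, \mathbf{m}) \mapsto \sum_i (x_i+m_i)\alpha_i$ is a bijection onto $\widehat{\sigma}$.

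Next, intersect with $\mathbf{t} + L$. Since $\sum m_i \alpha_i \in L$, a point $\sum_i (x_i + m_i)\alpha_i$ lies in $\mathbf{t} + L$ if and only if $\sum_i x_i \alpha_i$ does. So $\widehat{\sigma} \cap (\mathbf{t} + L)$ is in bijection with $R^P_{\boldsymbol{\alpha}} \times \mathbb{N}^n$. The set $R^P_{\boldsymbol{\alpha}}$ is finite, because $(\mathbf{t} + L) \cap P_{\boldsymbol{\alpha}}$ is a bounded subset of a discrete set; its size is at most $[L : L_{\boldsymbol{\alpha}}]$. For the element corresponding to $(\mathbf{x}, \mathbf{n})$, the $j$-th coordinate is
\[
    \sum_{i=1}^n (n_i + x_i) J_j(\alpha_i) = L_j^*(\mathbf{n} + \mathbf{x}),
\]
which is the $j$-th linear form for $A = A_{\boldsymbol{\alpha}}$, since $a_{ij} = J_j(\alpha_i)$.

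Substituting into \eqref{eq: Shintani zeta} gives
\[
    \zeta_\sigma(\mathbf{t}, \mathbf{s}) = \sum_{\mathbf{x} \in R^P_{\boldsymbol{\alpha}}} \sum_{\mathbf{n} \in \mathbb{N}^n} \prod_{j=1}^n L_j^*(\mathbf{n} + \mathbf{x})^{-s_j},
\]
which is the asserted identity, with $\chi = 1$ as in Definition \ref{def shintani zeta function}. The rearrangement is justified by absolute convergence for $\Re(s_j) > 1$, since all terms are positive.

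Two small points need checking. First, the hypothesis $\mathbf{x} \neq 0$ in Definition \ref{def shintani zeta function} holds because $\mathbf{t} \notin L$ is a nonzero torsion point. Second, the entries $a_{ij} = J_j(\alpha_i)$ are positive because each $\alpha_i$ is totally positive.

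The main obstacle is the first step: matching the upper closures of $\sigma$ and of $P_{\boldsymbol{\alpha}}$ coordinate by coordinate, including the case where some $c_i = 0$.
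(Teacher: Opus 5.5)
Your proposal is correct and is essentially the paper's proof. The paper likewise splits each $\beta\in\widehat{\sigma}_{\boldsymbol{\alpha}}\cap(\mathbf{t}+L)$ into a point of $\widehat{P}_{\boldsymbol{\alpha}}$ plus $\sum_i n_i\alpha_i$, taking floors of the coordinates of $\beta-\delta'(0,\dots,0,1)$; this is your half-open-interval tiling, and your version makes bijectivity more explicit.

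The point $c_i\neq 0$ that you flag needs no genericity argument, and the paper uses it tacitly when it asserts $y_i'\notin\mathbb{Z}$. Via the trace form, the hyperplane spanned by $\{\alpha_j\}_{j\neq i}\subset J(F)$ is $\{x:\langle J(\beta),x\rangle=0\}$ for some $\beta\in F^\times$, and $\langle J(\beta),(0,\dots,0,1)\rangle=J_n(\beta)\neq 0$. Likewise, $\mathbf{x}\neq 0$ holds for every torsion point $\mathbf{t}$, nonzero or not, simply because $0\notin\widehat{P}_{\boldsymbol{\alpha}}\subset V_{>0}$.
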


\begin{proof}
Since $P_{\boldsymbol{\alpha}}$ is compact and $\mathbf{t} + L$ is discrete, the set $R_{\boldsymbol{\alpha}}^P$ is finite. For any $\beta = \sum_{i=1}^n y_i \alpha_i \in \widehat{\sigma}_{\boldsymbol{\alpha}} \cap (\mathbf{t} + L)$, by definition of the upper closure there exists $\delta > 0$ such that for all $\delta' \in (0, \delta)$, the perturbed point $\beta - \delta'(0, \dots, 0, 1) = \sum_{i=1}^n y'_i \alpha_i$ lies in the interior of $\sigma_{\boldsymbol{\alpha}}$ with $y'_i \notin \mathbb{Z}$ for all $i$. Setting $n_i := \lfloor y'_i \rfloor \in \mathbb{N}$ (which is independent of a sufficiently small $\delta'$) and $x_i := y_i - n_i \in [0,1]$, we have $\sum_{i=1}^n x_i \alpha_i - \delta'(0, \dots, 0, 1) = \sum_{i=1}^n \{y'_i\} \alpha_i \in P_{\boldsymbol{\alpha}}$, so $\mathbf{x} = (x_1, \dots, x_n) \in R_{\boldsymbol{\alpha}}^P$ uniquely. Summing over $\mathbf{x} \in R_{\boldsymbol{\alpha}}^P$ and $\mathbf{n} \in \mathbb{N}^n$ gives the result.
\end{proof}

\begin{definition}\label{def: generating}
Let $\sigma_{\boldsymbol{\alpha}}$ be an $n$-dimensional simplicial cone generated by $\boldsymbol{\alpha} = (\alpha_1, \dots, \alpha_n) \in L_{>0}^n$ and let $\mathbf{t} \in V/L$ be a torsion point. We define the \textit{cone generating function} $\mathcal{G}_{\sigma_{\boldsymbol{\alpha}}}(\mathbf{t}, \mathbf{z})$ as the meromorphic function on $\mathbb{C}^n$ given by
\[
    \mathcal{G}_{\sigma_{\boldsymbol{\alpha}}}(\mathbf{t}, \mathbf{z}) := \frac{\sum_{\alpha \in \widehat{P}_{\boldsymbol{\alpha}} \cap (\mathbf{t} + L)} e^{\langle \alpha, \mathbf{z} \rangle}}{\bigl(1 - e^{\langle \alpha_1, \mathbf{z} \rangle}\bigr) \cdots \bigl(1 - e^{\langle \alpha_n, \mathbf{z} \rangle}\bigr)}.
\]
Expanding the geometric series in the region $\Re\langle \alpha_i, \mathbf{z} \rangle < 0$ shows that $\mathcal{G}_{\sigma_{\boldsymbol{\alpha}}}(\mathbf{t}, \mathbf{z}) = \sum_{\alpha \in \widehat{\sigma}_{\boldsymbol{\alpha}} \cap (\mathbf{t} + L)} e^{\langle \alpha, \mathbf{z} \rangle}$, so $\mathcal{G}_{\sigma_{\boldsymbol{\alpha}}}(\mathbf{t}, \mathbf{z})$ depends only on the cone $\sigma_{\boldsymbol{\alpha}}$ and not on the choice of generator $\boldsymbol{\alpha}$.
\end{definition}

\begin{proposition}\label{prop: partial zeta decomp}
Let $\mathbf{t} \in V/L$ be a nonzero torsion point, let $\Gamma = \mathcal{O}_{\mathbf{t}}^{\times,+}$, and let $\Phi$ be a Shintani decomposition. Then
\begin{equation}\label{eq: Shintani}
    \zeta(\mathbf{t}, L, s) = \sum_{\sigma \in \Phi_n / \Gamma} \zeta_\sigma(\mathbf{t}, (s, \dots, s)).
\end{equation}
In particular, for $L = J(\mathfrak{f}\mathfrak{b}^{-1})$, we have $N(\mathfrak{b})^s \zeta(\mathfrak{b}, \mathfrak{f}, s) = \sum_{\sigma \in \Phi_n / \Gamma} \zeta_\sigma(1, (s, \dots, s))$.
\end{proposition}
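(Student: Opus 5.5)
The identity follows by decomposing the sum over $(\mathbf{t}+L)^+/\Gamma$ along a $\Gamma$-stable fundamental domain built from the upper closures of the maximal cones. All sums are taken for $\Re(s)>1$, where everything converges absolutely, so rearrangements are harmless.

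First, set $\mathcal{A} := (\mathbf{t}+L)\cap V_{>0}$, the totally positive part of the coset. Since $\Gamma$ stabilizes $\mathbf{t}$ modulo $L$, $\Gamma$ preserves $L$, and $\Gamma$ preserves $V_{>0}$, the group $\Gamma$ acts on $\mathcal{A}$. By \eqref{eq: upper closure}, $\mathcal{A}=\coprod_{\sigma\in\Phi_n}(\widehat{\sigma}\cap\mathcal{A})$.

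Next, we need the upper closures to be compatible with the $\Gamma$-action, i.e. $\widehat{\sigma\cdot\gamma}=\widehat{\sigma}\cdot\gamma$. This is the main point to check. The direction $(0,\dots,0,1)$ is an eigenvector of the diagonal action of $\gamma$ with positive eigenvalue $J_n(\gamma)^{-1}$. Hence perturbing $u\cdot\gamma$ downward in the last coordinate by $\delta'$ corresponds to perturbing $u$ downward by $J_n(\gamma)\delta'$, which is again small and positive. So the defining condition for the upper closure is transported exactly by $\gamma$, giving $\widehat{\sigma\cdot\gamma}=\widehat{\sigma}\cdot\gamma$.

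Now choose a set of representatives $\Sigma\subset\Phi_n$ for $\Phi_n/\Gamma$; it is finite by Definition \ref{def: Shintani}(3). We also need that $\Gamma$ acts freely on $\Phi_n$. If $\sigma\cdot\gamma=\sigma$, then $\gamma$ permutes the finitely many rays of $\sigma$, so some power $\gamma^m$ fixes each generator $\alpha_i$. Then $J_k(\gamma)^m=1$ for every embedding $k$, and since $\gamma$ is totally positive this forces $\gamma=1$. Combining this with the previous step,
\[
\mathcal{A}=\coprod_{\sigma\in\Sigma}\coprod_{\gamma\in\Gamma}(\widehat{\sigma}\cap\mathcal{A})\cdot\gamma .
\]
It follows that $\coprod_{\sigma\in\Sigma}(\widehat{\sigma}\cap\mathcal{A})$ is a set of representatives for $\mathcal{A}/\Gamma$: each orbit meets it exactly once.

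Because $N(\alpha\cdot\gamma)=N(\alpha)$ for totally positive units $\gamma$, we obtain
\[
\zeta(\mathbf{t},L,s)=\sum_{\sigma\in\Sigma}\ \sum_{\alpha\in\widehat{\sigma}\cap(\mathbf{t}+L)}N(\alpha)^{-s}.
\]
By \eqref{eq: diagonal}, the inner sum is $\zeta_\sigma(\mathbf{t},(s,\dots,s))$. This term is independent of the representative chosen in the orbit, since $\gamma$ maps $\widehat{\sigma}\cap(\mathbf{t}+L)$ bijectively onto $\widehat{\sigma\cdot\gamma}\cap(\mathbf{t}+L)$ and preserves norms. This proves \eqref{eq: Shintani}.

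Finally, the particular case follows by taking $\mathbf{t}=1$ and $L=J(\mathfrak{f}\mathfrak{b}^{-1})$, with $\Gamma=\mathcal{O}_1^{\times,+}$, and using \eqref{lattice zeta and partial}.
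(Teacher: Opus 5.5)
Your proof is correct and follows the same route as the paper. The paper observes that, by \eqref{eq: upper closure}, the union $\coprod_{\sigma\in C}\widehat{\sigma}$ over a set of representatives $C$ of $\Phi_n/\Gamma$ is a fundamental domain for $\Gamma$ acting on $V_{>0}$, and then reads the identity off from \eqref{eq lattice zeta} and \eqref{eq: diagonal}. You additionally prove the two facts the paper leaves implicit in its ``fundamental domain'' claim: that $\widehat{\sigma\cdot\gamma}=\widehat{\sigma}\cdot\gamma$, because $\gamma$ acts diagonally with a positive eigenvalue on the last coordinate direction, and that $\Gamma$ acts freely on $\Phi_n$.
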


\begin{proof}
By \eqref{eq: upper closure}, if $C \subset \Phi_n$ is a set of representatives for $\Phi_n/\Gamma$, then $\coprod_{\sigma \in C} \widehat{\sigma}$ is a fundamental domain for the action of $\Gamma$ on $V_{>0}$. The identity then follows from \eqref{eq lattice zeta} and \eqref{eq: diagonal}.
\end{proof}

\subsection{Shintani cocycles}

\paragraph{Lattice and simplicial resolutions.}
For each integer $q \geq 0$, let
\[
    C_q(L_{>0}) := \bigoplus^{\mathrm{alt}}_{\boldsymbol{\alpha} \in L_{>0}^{q+1}} \mathbb{Z}\boldsymbol{\alpha}
\]
be the quotient of $\bigoplus_{\boldsymbol{\alpha} \in L_{>0}^{q+1}} \mathbb{Z}\boldsymbol{\alpha}$ by the submodule generated by $\{\rho(\boldsymbol{\alpha}) - \sgn(\rho)\boldsymbol{\alpha} \mid \boldsymbol{\alpha} \in L_{>0}^{q+1},\, \rho \in \mathfrak{S}_{q+1}\}$ and all tuples $\boldsymbol{\alpha} = (\alpha_0, \dots, \alpha_q)$ with $\alpha_i = \alpha_j$ for some $i \neq j$. Write $\langle \boldsymbol{\alpha} \rangle$ for the class of $\boldsymbol{\alpha}$ in $C_q(L_{>0})$. Equipped with the diagonal action of $\Gamma$ and the standard simplicial differential
\[
    d_q\bigl(\langle \alpha_0, \dots, \alpha_q \rangle\bigr) := \sum_{j=0}^q (-1)^j \langle \alpha_0, \dots, \widehat{\alpha}_j, \dots, \alpha_q \rangle
\]
(augmented by $d_0(\langle \alpha \rangle) = 1$), $C_\bullet(L_{>0})$ is a free resolution of $\mathbb{Z}$ as a $\mathbb{Z}[\Gamma]$-module.

\begin{remark}
Algebraically, the augmented complex $C_\bullet(L_{>0}) \xrightarrow{d_0} \mathbb{Z} \to 0$ can be identified with the (shifted) Koszul complex associated to the augmentation map $\varepsilon: M := \bigoplus_{\alpha \in L_{>0}} \mathbb{Z}e_\alpha \to \mathbb{Z}$ given by $\varepsilon(e_\alpha) = 1$. Indeed, identifying $\langle \alpha_0, \dots, \alpha_q \rangle$ with the exterior product $e_{\alpha_0} \wedge \cdots \wedge e_{\alpha_q}$ gives $C_q(L_{>0}) \cong \bigwedge^{q+1}_{\mathbb{Z}} M$, under which the simplicial differential $d_q$ coincides with the Koszul contraction $\iota_\varepsilon(e_{\alpha_0} \wedge \cdots \wedge e_{\alpha_q}) = \sum_{j=0}^q (-1)^j \varepsilon(e_{\alpha_j})\, e_{\alpha_0} \wedge \cdots \wedge \widehat{e_{\alpha_j}} \wedge \cdots \wedge e_{\alpha_q}$. Since $\varepsilon$ is surjective (any fixed $\beta \in L_{>0}$ defines a splitting $1 \mapsto e_\beta$ and a $\mathbb{Z}$-linear contracting homotopy $e_{\alpha_0} \wedge \cdots \wedge e_{\alpha_q} \mapsto e_\beta \wedge e_{\alpha_0} \wedge \cdots \wedge e_{\alpha_q}$), the Koszul complex is exact, and because $\Gamma$ acts freely on the basis of $M$, each $C_q(L_{>0})$ is a free $\mathbb{Z}[\Gamma]$-module.
\end{remark}

Given a Shintani decomposition $\Phi$, each $\sigma \in \Phi_{q+1}$ has a primitive generator $\boldsymbol{\alpha} = (\alpha_0, \dots, \alpha_q) \in L_{>0}^{q+1}$, unique up to permutation. When $q = n-1$, we set $\langle \sigma \rangle := \sgn(\boldsymbol{\alpha})\langle \boldsymbol{\alpha} \rangle \in C_{n-1}(L_{>0})$, where $\sgn(\boldsymbol{\alpha}) := \sgn\det(A_{\boldsymbol{\alpha}})$ (and similarly for oriented faces in degree $q$).

\begin{lemma}\label{Lemma: quasi iso}
Let $C_q(\Phi)$ be the $\mathbb{Z}[\Gamma]$-submodule of $C_q(L_{>0})$ generated by $\langle \sigma \rangle$ for all $\sigma \in \Phi_{q+1}$. Then $C_\bullet(\Phi)$ is a subcomplex of $C_\bullet(L_{>0})$ that forms a free $\mathbb{Z}[\Gamma]$-resolution of $\mathbb{Z}$. In particular, the inclusion
\[
    C_\bullet(\Phi) \xhookrightarrow{\sim} C_\bullet(L_{>0})
\]
is a $\Gamma$-equivariant quasi-isomorphism.
\end{lemma}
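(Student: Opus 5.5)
The plan is to compare $C_\bullet(\Phi)$ with the simplicial chain complex of a contractible $\Gamma$-CW complex: the triangulation of the spherical slice $X = \mathbb{S}^{n-1}\cap V_{>0}$ induced by $\Phi$. Write $X_\sigma := \sigma\cap X$ for $\sigma\in\Phi_{q+1}$. Then $X_\sigma$ is a spherical $q$-simplex, and by condition (1) of Definition \ref{def: Shintani} the open simplices $\sigma^\circ\cap X$ ($\sigma\neq\{0\}$) partition $X$. Because $\Phi$ is a fan, the cell $X_\sigma$ meets $X_{\sigma'}$ in a common face. So $\Phi^X := \{X_\sigma\}$ is a $\Gamma$-equivariant simplicial structure on $X$, which is locally finite since $\Phi/\Gamma$ is finite and $\Gamma$ acts properly.

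First, $C_\bullet(\Phi)$ is a subcomplex. For $\sigma\in\Phi_{q+1}$ with primitive generator $\boldsymbol\alpha$, each summand of $d_q\langle\boldsymbol\alpha\rangle$ is $\pm\langle\boldsymbol\alpha\setminus\alpha_j\rangle$. Here $\boldsymbol\alpha\setminus\alpha_j$ is the primitive generator of a face of $\sigma$, and that face lies in $\Phi$. Hence $d_q$ maps $C_q(\Phi)$ into $C_{q-1}(\Phi)$, and the augmentation $d_0\langle\alpha\rangle=1$ makes sense on $C_0(\Phi)$.

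Second, each $C_q(\Phi)$ is free over $\mathbb{Z}[\Gamma]$. It is the free abelian group on the oriented simplices $\langle\sigma\rangle$, $\sigma\in\Phi_{q+1}$, where each is fixed up to the sign of orientation. The elements $\langle\sigma\rangle$ for distinct $\sigma$ are $\mathbb{Z}$-independent in $C_q(L_{>0})$, because they are distinct unordered sets of primitive vectors. $\Gamma$ acts freely on $\Phi_{q+1}$: if $\sigma\gamma=\sigma$, then $\gamma$ permutes the finitely many primitive generators of $\sigma$, so some power $\gamma^m$ fixes a totally positive vector, forcing $\gamma^m=1$ and hence $\gamma=1$, since $\Gamma$ is torsion-free. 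It also preserves orientations: $\gamma$ acts by a positive diagonal matrix, so $\sgn\det A_{\boldsymbol\alpha\gamma}=\sgn\det A_{\boldsymbol\alpha}$. Choosing representatives of $\Phi_{q+1}/\Gamma$, which is finite, therefore gives a finite $\mathbb{Z}[\Gamma]$-basis.

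Third, and this is the main step, the augmented complex $C_\bullet(\Phi)\to\mathbb{Z}\to0$ is exact. By construction $C_\bullet(\Phi)$ is isomorphic to the oriented simplicial chain complex of $\Phi^X$, with $C_q$ corresponding to $q$-simplices. Its homology is therefore the singular homology of $X$. Now $X\cong V^1\cong\mathbb{R}^{n-1}$ is contractible (Section \ref{section geometric picture}), so $H_0=\mathbb{Z}$ via the augmentation and $H_q=0$ for $q>0$.

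The obstacle is justifying that simplicial homology agrees with singular homology for this infinite, though locally finite, triangulation. I would handle it by writing $X$ as an increasing union of finite subcomplexes and using that homology commutes with this directed colimit. Alternatively, I would appeal to the standard theorem for arbitrary simplicial complexes, after checking that the topology of $X$ agrees with the weak topology of $\Phi^X$, which follows from local finiteness.

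Finally, $C_\bullet(L_{>0})$ is also a free resolution of $\mathbb{Z}$ (by the Remark), and the inclusion commutes with the augmentations. So the inclusion is a $\Gamma$-equivariant map of resolutions inducing the identity on $H_0=\mathbb{Z}$, and hence a quasi-isomorphism.
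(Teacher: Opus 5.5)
Your proposal is correct and follows the same route as the paper. The paper identifies $C_\bullet(\Phi)$ with the simplicial chain complex of the $\Gamma$-equivariant triangulation $\Phi^1$ of the contractible space $V^1 \cong \mathbb{R}^{n-1}$ and gets exactness from contractibility; you do the same on the diffeomorphic slice $X$ with $\Phi^X$, and you also supply details the paper leaves implicit: the subcomplex and freeness checks, the simplicial-versus-singular comparison for the infinite locally finite triangulation, and the compatibility with augmentations that makes this particular inclusion a quasi-isomorphism (the paper only says ``any two free resolutions are quasi-isomorphic'').
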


\begin{proof}
Under the radial projection $p_1: V_{>0} \to V^1$, the set $\Phi^1 := \{\sigma \cap V^1 \mid \sigma \in \Phi \setminus \{\{0\}\}\}$ defines a $\Gamma$-equivariant simplicial triangulation of the contractible manifold $V^1 \cong \mathbb{R}^{n-1}$, whose simplicial chain complex is identified with $C_\bullet(\Phi)$. Since $V^1$ is contractible, $C_\bullet(\Phi)$ is an exact free $\mathbb{Z}[\Gamma]$-resolution of $\mathbb{Z}$, and any two free resolutions are quasi-isomorphic.
\end{proof}

\paragraph{Kronecker duality on the unit torus.}
By Dirichlet's Unit Theorem, the quotient $\mathcal{S}^1 := V^1/\Gamma \cong \mathbb{R}^{n-1}/\mathbb{Z}^{n-1}$ is a compact oriented $(n-1)$-dimensional real torus. Passing to $\Gamma$-coinvariants $C_q(\Phi/\Gamma) := C_q(\Phi)_\Gamma$ yields the simplicial chain complex of the triangulation of $\mathcal{S}^1$ induced by $\Phi^1$. Thus,
\[
    H_m\bigl(C_\bullet(\Phi/\Gamma)\bigr) \cong H_m(\mathcal{S}^1, \mathbb{Z}), \qquad H^m\bigl(\Hom_{\mathbb{Z}}(C_\bullet(\Phi/\Gamma), \mathbb{Z})\bigr) \cong H^m(\mathcal{S}^1, \mathbb{Z}).
\]
The fundamental class of $\mathcal{S}^1$ in top degree $H_{n-1}(C_\bullet(\Phi/\Gamma)) \cong \mathbb{Z}$ is represented by the cycle
\begin{equation}\label{eq: fundamental class}
    \sum_{\overline{\sigma} \in \Phi_n/\Gamma} \langle \overline{\sigma} \rangle \in C_{n-1}(\Phi/\Gamma).
\end{equation}
For any field $\mathbb{K}$, the Universal Coefficient Theorem and Kronecker duality induce the canonical evaluation isomorphism
\begin{equation}\label{eq: isom}
\begin{tikzcd}[row sep=tiny]
    H^{n-1}\bigl(\Hom_{\mathbb{Z}}(C_\bullet(\Phi/\Gamma), \mathbb{K})\bigr) \arrow[rr, "\cong"] && \mathbb{K} \\
    {[\varphi]} \arrow[rr, maps to] && \sum_{\overline{\sigma} \in \Phi_n/\Gamma} \varphi\bigl(\langle \overline{\sigma} \rangle\bigr).
\end{tikzcd}
\end{equation}

\paragraph{The Shintani cocycle class.}
By \cite[Proposition 6.2]{Yam10}, for any $\boldsymbol{\alpha} = (\alpha_0, \dots, \alpha_n) \in L_{>0}^{n+1}$, the characteristic functions of the upper closures satisfy the cocycle identity on $V_{>0}$:
\begin{equation}\label{eq: cocycle}
    \sum_{j=0}^n (-1)^j \sgn(\alpha_0, \dots, \widehat{\alpha}_j, \dots, \alpha_n) \mathbf{1}_{\widehat{\sigma}_{(\alpha_0, \dots, \widehat{\alpha}_j, \dots, \alpha_n)}} \equiv 0.
\end{equation}
Multiplying by $e^{\langle \alpha, \mathbf{z} \rangle}$ and summing over $\alpha \in \mathbf{t} + L$ yields the cocycle relation for the cone generating functions:
\[
    \sum_{j=0}^n (-1)^j \sgn(\alpha_0, \dots, \widehat{\alpha}_j, \dots, \alpha_n) \mathcal{G}_{\sigma_{(\alpha_0, \dots, \widehat{\alpha}_j, \dots, \alpha_n)}}(\mathbf{t}, \mathbf{z}) = 0.
\]

\begin{definition}\label{def shintani class}
Let $\mathbb{K}$ be a subfield of $\mathbb{C}$ containing $F^{\mathrm{Gal}}$, let $\widetilde{R} = \widetilde{R}_{\mathbb{K}}$ be the field of cone-adapted Laurent series introduced in Section \ref{section Field of Coefficients} above, and let $\rho_\Gamma = \rho_N: \widetilde{R} \twoheadrightarrow \widetilde{R}_\Gamma \cong \mathbb{K}(\!(w)\!)$ be the diagonal norm projection. For a fixed nonzero torsion point $\mathbf{t} \in V/L$ stabilized by $\Gamma$, we define the \textit{Shintani class}
\[
    [\mathcal{G}] \in H^{n-1}\bigl(\Hom_{\mathbb{Z}}(C_\bullet(\Phi/\Gamma), \widetilde{R}_\Gamma)\bigr)
\]
as the cohomology class of the $(n-1)$-cocycle $\langle \boldsymbol{\alpha} \rangle \mapsto \rho_\Gamma\bigl(\sgn(\boldsymbol{\alpha})\mathcal{G}_{\sigma_{\boldsymbol{\alpha}}}(\mathbf{t}, \mathbf{z})\bigr)$ (or equivalently $\langle \overline{\sigma}_{\boldsymbol{\alpha}} \rangle \mapsto \rho_\Gamma\bigl(\mathcal{G}_{\sigma_{\boldsymbol{\alpha}}}(\mathbf{t}, \mathbf{z})\bigr)$).
\end{definition}

\section{Explicit Construction of the Green Current and Main Theorem}\label{sec:explicit_comparison}

In this section we combine the theories developed in the two previous sections and construct a suitable Green current which computes the polylogarithm class. The basic idea is to decompose the geometric spaces of section \ref{sec:polylog} into cones and show that the polylogarithm class is compatible with such a decomposition and specializes to the corresponding Shintani class.

\subsection{Hurwitz's integral formula as a Mellin transform on $X$}\label{sec:hurwitz_mellin}

Remember that through $\psi$, the space $X$ inherits the $\Gamma$-action from $V^1$, and any Shintani decomposition $\Phi$ of $V_{>0} \cup \{0\}$ induces compatible $\Gamma$-equivariant simplicial decompositions $\Phi^1$ of $V^1$ and $\Phi^X := \{X_\sigma := \sigma \cap X \mid \sigma \in \Phi \setminus \{\{0\}\}\}$ of $X$.

For any tuple $\boldsymbol{\alpha} = (\alpha_1, \dots, \alpha_n) \in L_{>0}^n$ generating an $n$-dimensional simplicial cone $\sigma_{\boldsymbol{\alpha}} \subset V_{>0} \cup \{0\}$ with matrix $A_{\boldsymbol{\alpha}} = J(\boldsymbol{\alpha})$, consider the rational function of Sczech \cite{Scz93}:
\[
    f(\boldsymbol{\alpha})(\mathbf{z}) := \frac{\det(A_{\boldsymbol{\alpha}})}{\prod_{j=1}^n \langle \mathbf{z}, \alpha_j \rangle} \in \widetilde{R}_{F^{\mathrm{Gal}}},
\]
which satisfies the cocycle relation $\sum_{i=0}^n (-1)^i f(\alpha_0, \dots, \widehat{\alpha}_i, \dots, \alpha_n) = 0$ (\cite[Lemma 1]{Scz93}).

In the Proposition below we prove Hurwitz's integral representation for $f(\boldsymbol{\alpha})(\mathbf{z})$ directly by interpreting it as a Mellin transform on $V_{>0}$ factored through the radial projection to $X = \mathbb{S}^{n-1} \cap V_{>0}$.  

\begin{proposition}[Hurwitz \cite{Hur22}]\label{hurwitz}
Let $X_{\boldsymbol{\alpha}} := \sigma_{\boldsymbol{\alpha}} \cap X$ be the spherical simplex defined by an oriented simplicial cone $\sigma_{\boldsymbol{\alpha}}$ ($\det(A_{\boldsymbol{\alpha}}) > 0$), and let $\mathbf{z} \in V_{\mathbb{C}}$ be such that $\langle \mathbf{z}, \alpha_j \rangle \neq 0$ for all $j = 1, \dots, n$. Define the $(n-1)$-form on $X$ (and $V$) by
\[
    \omega_y := \sum_{i=1}^n (-1)^{i-1} y_i\, dy_1 \wedge \cdots \wedge \widehat{dy_i} \wedge \cdots \wedge dy_n \in \Omega^{n-1}(X).
\]
Then
\begin{equation}\label{eq:hurwitz_formula}
    (n-1)! \int_{X_{\boldsymbol{\alpha}}} \frac{\omega_y}{\langle \mathbf{z}, y \rangle^n} = f(\boldsymbol{\alpha})(\mathbf{z}).
\end{equation}
\end{proposition}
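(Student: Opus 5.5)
Plan: prove \eqref{eq:hurwitz_formula} first for $\mathbf{z}$ in the open region $\Re\langle \mathbf{z}, \alpha_j\rangle<0$ for all $j$ (so $\Re\langle\mathbf{z},y\rangle<0$ on all of $\sigma_{\boldsymbol\alpha}\setminus\{0\}$), by computing a Laplace/Mellin integral over the cone $\sigma_{\boldsymbol\alpha}$ in two ways. Then extend to all admissible $\mathbf{z}$ by analytic continuation. Both sides are rational in $\mathbf{z}$, homogeneous of degree $-n$, and the left side depends holomorphically on $\mathbf{z}$ wherever $\langle\mathbf{z},y\rangle\neq 0$ on the compact simplex $X_{\boldsymbol\alpha}$. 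For general $\mathbf{z}$ with only $\langle \mathbf{z},\alpha_j\rangle\ne0$, I would interpret the left-hand side through this continuation, or as a principal value; this is the sense relevant in $\widetilde R$.

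The key computation is
\[
I(\mathbf{z}):=\int_{\sigma_{\boldsymbol\alpha}} e^{\langle \mathbf{z},v\rangle}\,dv_1\wedge\cdots\wedge dv_n .
\]
\emph{First way.} Substitute $v=\sum_j x_j\alpha_j$ with $x\in\mathbb{R}_{\ge0}^n$. The Jacobian is $\det(A_{\boldsymbol\alpha})>0$, so the orientation is preserved, and
\[
I=\det(A_{\boldsymbol\alpha})\prod_{j=1}^n\int_0^\infty e^{x_j\langle\mathbf{z},\alpha_j\rangle}\,dx_j=\det(A_{\boldsymbol\alpha})\prod_j\frac{-1}{\langle \mathbf{z},\alpha_j\rangle}=(-1)^n f(\boldsymbol\alpha)(\mathbf{z}).
\]
\emph{Second way.} Use polar coordinates $v=ry$ with $r>0$ and $y\in X$. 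The standard identity $dv_1\wedge\cdots\wedge dv_n=r^{n-1}\,dr\wedge\omega_y$ holds on $V_{>0}$, since $\omega_y$ restricted to the unit sphere is its volume form, i.e.\ the contraction of the Euler field into the volume form. The integral therefore factors as
\[
I=\int_{X_{\boldsymbol\alpha}}\Bigl(\int_0^\infty e^{r\langle\mathbf{z},y\rangle}r^{n-1}\,dr\Bigr)\omega_y .
\]
By \eqref{eq: gamma} with $s=n$ and $\lambda=-\langle\mathbf{z},y\rangle$ (which has $\Re\lambda>0$), the inner integral equals
\[
\Gamma(n)(-\langle\mathbf{z},y\rangle)^{-n}=(-1)^n(n-1)!\,\langle\mathbf{z},y\rangle^{-n}.
\]
Comparing the two expressions, the factors $(-1)^n$ cancel and we obtain \eqref{eq:hurwitz_formula} on the open region. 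Fubini is justified by absolute convergence, since $\Re\langle\mathbf{z},y\rangle\le -c<0$ uniformly on the compact set $X_{\boldsymbol\alpha}$.

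The main obstacle is orientation and sign bookkeeping. Two checks are needed: that $dv=r^{n-1}dr\wedge\omega_y$ (and not $\omega_y\wedge dr$, which differs by $(-1)^{n-1}$) matches the orientation on $X$ for which the formula is stated, and that the boundary orientation on $X_{\boldsymbol\alpha}$ is compatible with $\det A_{\boldsymbol\alpha}>0$. I would verify the convention first in the case $n=1$, where $X=\{1\}$ and the statement reduces to $1/z\cdot\alpha=\alpha/(z\alpha)$. A secondary issue is making the continuation step precise when $\mathbf{z}$ is real and $\langle\mathbf{z},\cdot\rangle$ changes sign on $X_{\boldsymbol\alpha}$. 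There I would either restrict to $\mathbf{z}$ with $\langle\mathbf{z},y\rangle\neq0$ on $X_{\boldsymbol\alpha}$ and continue holomorphically, or note that both sides are identical rational expressions after expansion in $\widetilde R$.
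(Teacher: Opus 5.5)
Your proposal is correct and follows essentially the same argument as the paper: the same Laplace integral over $\sigma_{\boldsymbol{\alpha}}$ is evaluated once via the cone parametrization $v=\sum_j x_j\alpha_j$ and once via polar coordinates $dv_1\wedge\cdots\wedge dv_n=r^{n-1}\,dr\wedge\omega_y$, followed by analytic continuation. The differences are cosmetic. You use $e^{+\langle\mathbf{z},v\rangle}$ on the cone-adapted region $\Re\langle\mathbf{z},\alpha_j\rangle<0$ instead of the paper's $e^{-\langle\mathbf{z},v\rangle}$ with $\Re(z_i)>0$, which only introduces two factors $(-1)^n$ that cancel. You also state more carefully than the paper's ``by analytic continuation'' that the left-hand side is an honest convergent integral only when $\langle\mathbf{z},y\rangle\neq 0$ on $X_{\boldsymbol{\alpha}}$.
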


\begin{proof}
By analytic continuation, it suffices to prove \eqref{eq:hurwitz_formula} for $\mathbf{z} \in V_{\mathbb{C}}$ with $\Re(z_i) > 0$ for all $i = 1, \dots, n$, so that $\Re\langle \mathbf{z}, v \rangle > 0$ for all $v \in V_{>0}$. Consider the Laplace integral
\[
    I(\boldsymbol{\alpha}, \mathbf{z}) := \int_{\sigma_{\boldsymbol{\alpha}}} e^{-\langle \mathbf{z}, v \rangle}\, dv_1 \wedge \cdots \wedge dv_n.
\]
On the one hand, parameterizing $\sigma_{\boldsymbol{\alpha}}$ by $v = \sum_{j=1}^n t_j \alpha_j$ with $t_j \in \mathbb{R}_{\geq 0}$ gives $dv_1 \wedge \cdots \wedge dv_n = \det(A_{\boldsymbol{\alpha}})\, dt_1 \wedge \cdots \wedge dt_n$, so
\[
    I(\boldsymbol{\alpha}, \mathbf{z}) = \det(A_{\boldsymbol{\alpha}}) \prod_{j=1}^n \int_0^\infty e^{-t_j \langle \mathbf{z}, \alpha_j \rangle}\, dt_j = \frac{\det(A_{\boldsymbol{\alpha}})}{\prod_{j=1}^n \langle \mathbf{z}, \alpha_j \rangle} = f(\boldsymbol{\alpha})(\mathbf{z}).
\]
On the other hand, under the radial diffeomorphism $\mathbb{R}_{>0} \times X \xrightarrow{\sim} V_{>0}$, $(r, y) \mapsto v = ry$, we have $dv_i = y_i\,dr + r\,dy_i$ and hence $dv_1 \wedge \cdots \wedge dv_n = r^{n-1} dr \wedge \omega_y$, while $\sigma_{\boldsymbol{\alpha}} \setminus \{0\} \cong \mathbb{R}_{>0} \times X_{\boldsymbol{\alpha}}$. Integrating first over $r \in (0, \infty)$ (which is the Mellin transform of $e^{-r\langle \mathbf{z}, y \rangle}$ at $s = n$) yields
\[
    I(\boldsymbol{\alpha}, \mathbf{z}) = \int_{X_{\boldsymbol{\alpha}}} \left(\int_0^\infty e^{-r\langle \mathbf{z}, y \rangle} r^{n-1}\,dr\right) \omega_y = (n-1)! \int_{X_{\boldsymbol{\alpha}}} \frac{\omega_y}{\langle \mathbf{z}, y \rangle^n}.
\]
Equating the two expressions for $I(\boldsymbol{\alpha}, \mathbf{z})$ gives \eqref{eq:hurwitz_formula}.

\end{proof}

\begin{remark}
    Hurwitz proves this in \cite{Hur22} (see also \cite{Scz93} \S 2.1) by considering the integral over the unique simplex of $\mathbb{P}^{n-1}(\mathbb{R})$ defined by $\boldsymbol{\alpha}$ where $\langle \mathbf{z}, y \rangle$ does not vanish. When $\boldsymbol{\alpha}$ consists of totally positive generators, as in our case, this simplex corresponds to $X_{\boldsymbol{\alpha}}$ under the equivalence between $\mathbb{P}^{n-1}(\mathbb{R})$ and the $n-1$ sphere modulo the antipodal map.
\end{remark}

\begin{corollary}\label{corollary to hurwitz}
Let $\psi: V^1 \xrightarrow{\sim} X$, $\psi(q) = \frac{q}{\Vert q \Vert}$, be the radial diffeomorphism, and on $V^1 = \{q \in V_{>0} \mid N(q) = 1\}$ set $\omega_{V^1} := \sum_{i=1}^n (-1)^{i-1} d\log q_1 \wedge \cdots \wedge \widehat{d\log q_i} \wedge \cdots \wedge d\log q_n$. Then the degree-$0$ homogeneous $(n-1)$-form $\frac{\omega_y}{\langle \mathbf{z}, y \rangle^n}$ satisfies $\psi^*\left(\frac{\omega_y}{\langle \mathbf{z}, y \rangle^n}\right) = \frac{\omega_{V^1}}{\langle \mathbf{z}, q \rangle^n}$, and on the entire positive orthant $X = X_{(e_1, \dots, e_n)}$ we have
\begin{equation}\label{eq:hurwitz_global}
    (n-1)! \int_X \frac{\omega_y}{\langle \mathbf{z}, y \rangle^n} = (n-1)! \int_{V^1} \frac{\omega_{V^1}}{\langle \mathbf{z}, q \rangle^n} = f(e_1, \dots, e_n)(\mathbf{z}) = \frac{1}{N(\mathbf{z})}.
\end{equation}
\end{corollary}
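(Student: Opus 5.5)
The corollary has two claims: a pullback identity for $\frac{\omega_y}{\langle \mathbf{z}, y\rangle^n}$ under $\psi$, and the global integral formula over $X$. I would prove the pullback identity first and then deduce the integral formula from Proposition \ref{hurwitz} applied to the standard basis.

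\textbf{Step 1: homogeneity.} On $V_{>0}$, the form $\Theta := \omega_v/\langle \mathbf{z}, v\rangle^n$ is invariant under the scalings $v\mapsto \lambda v$: the form $\omega_v$ has weight $n$ and $\langle \mathbf{z},v\rangle^{n}$ also has weight $n$. Moreover $\iota_E\Theta = 0$ for the Euler field $E=\sum v_i\partial_{v_i}$, since $\omega_v=\iota_E(dv_1\wedge\cdots\wedge dv_n)$ and $\iota_E\iota_E=0$. A form that is both scaling invariant and annihilated by $\iota_E$ is basic for the radial projection. So $\Theta=p_X^*(\Theta|_X)=p_1^*(\Theta|_{V^1})$, and since $\psi=p_X|_{V^1}$, we get
\[
\psi^*(\Theta|_X)=(p_X\circ\iota_{V^1})^*(\Theta|_X)=\Theta|_{V^1}.
\]

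\textbf{Step 2: identification on $V^1$.} It remains to check that $\omega_v|_{V^1}=\omega_{V^1}$. On $V^1$ we have $\sum_i d\log q_i=0$. Expanding
\[
\omega_v=N(v)\sum_i(-1)^{i-1}\,d\log v_1\wedge\cdots\wedge\widehat{d\log v_i}\wedge\cdots\wedge d\log v_n
\]
and using $N=1$ on $V^1$ gives $\omega_v|_{V^1}=\omega_{V^1}$. The factor $N(v)$ appears because $y_i\,dy_1\cdots\widehat{dy_i}\cdots dy_n=\bigl(\prod_j y_j\bigr)\,d\log y_1\cdots\widehat{d\log y_i}\cdots d\log y_n$. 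This proves the pullback claim.

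\textbf{Step 3: the integral.} Because $\psi$ is an orientation-compatible diffeomorphism (orientations as fixed in the paper), the first equality in \eqref{eq:hurwitz_global} follows from Step 2 by change of variables. For the second equality, note that $X=\sigma_{(e_1,\dots,e_n)}\cap X$ and $\det A=\det I=1>0$. Proposition \ref{hurwitz} then applies verbatim: its proof uses only $\alpha_j\in V_{>0}\cup\{0\}$ spanning a simplicial cone and never uses $\alpha_j\in L$. This gives the value $f(e_1,\dots,e_n)(\mathbf{z})=1/\prod_j\langle\mathbf{z},e_j\rangle=1/(z_1\cdots z_n)=1/N(\mathbf{z})$.

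The only delicate point is the orientation and sign bookkeeping in Steps 2 and 3, namely checking that $\psi$ carries the orientation of $X$ to the one for which $\omega_{V^1}$ is positive. I would verify this by evaluating both forms on a tangent frame at the point $(1,\dots,1)$, where they agree up to a positive scalar. Convergence is not an issue: it reduces to the Laplace-integral argument already given in the proof of Proposition \ref{hurwitz}, for $\Re z_i>0$, followed by analytic continuation.
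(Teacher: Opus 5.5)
Your proposal is correct and follows essentially the same route as the paper: the paper's observation that rescaling by a positive function $\lambda$ kills the $d\lambda$ terms is the same fact as your $\iota_E\omega_v=0$ together with dilation invariance, and both arguments then use $N=1$ on $V^1$ and Proposition~\ref{hurwitz} for the standard basis. One small correction: the $e_j$ do not lie in $V_{>0}\cup\{0\}$ (the paper notes that $(e_1,\dots,e_n)$ is not strictly positive), so your justification should instead say that the Laplace-integral proof only needs the $\alpha_j$ to be linearly independent in the closed orthant $\mathbb{R}_{\geq 0}^n$, which gives $\Re\langle\mathbf{z},v\rangle>0$ on $\sigma_{\boldsymbol{\alpha}}\setminus\{0\}$ when all $\Re z_i>0$, with the boundary of the closed spherical simplex having measure zero.
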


\begin{proof}
Since $\frac{\omega_y}{\langle \mathbf{z}, y \rangle^n}$ is invariant under rescaling $y \mapsto \lambda y$ for any smooth positive function $\lambda$ (as $d(\lambda y_i) = \lambda\,dy_i + y_i\,d\lambda$ and $\sum_{i=1}^n (-1)^{i-1} y_i\,dy_1 \wedge \cdots \wedge y_j\,d\lambda \wedge \cdots \wedge dy_n = 0$), its pullback along $\psi(q) = \Vert q \Vert^{-1} q$ is simply the restriction of $\frac{\omega_q}{\langle \mathbf{z}, q \rangle^n}$ to $V^1$. Since $q_1 \cdots q_n = 1$ on $V^1$, we have $\omega_q|_{V^1} = \omega_{V^1}$. Taking $\boldsymbol{\alpha} = (e_1, \dots, e_n)$ in Proposition \ref{hurwitz} gives $X_{\boldsymbol{\alpha}} = X$, $\det(A_{\boldsymbol{\alpha}}) = 1$, and $\prod_{j=1}^n \langle \mathbf{z}, e_j \rangle = z_1 \cdots z_n = N(\mathbf{z})$. Note that while $\boldsymbol{\alpha} = (e_1, \dots, e_n)$ may not be in $L$ and is not strictly positive, the proof of Proposition \ref{hurwitz} still applies.
\end{proof}

\begin{remark}
    This is Theorem 5.6 of \cite{BKL18}. It is also equivalent to \cite[(0.3)]{Col24}. There, in (0.2) Colmez decomposes the totally positive orthant into cones as in Definition \ref{def: Shintani} and takes the Laplace transform of the identification function, which gives (0.3) equating $\frac{1}{N( \boldsymbol{z})}$ and the sum of all the functions $f(\boldsymbol{\alpha})(\boldsymbol{z})$ for all cones. This sum is precisely the integral in Proposition \ref{hurwitz} decomposed into the simplexes defined by each cone.
\end{remark}

\subsection{Poisson summation and cone generating distributions}

Throughout the remainder of this section, we set $\widetilde{R} = \widetilde{R}_{\mathbb{C}} = \mathbb{C}_{\leq_{\mathrm{lex}}}(\!(\mathbf{z})\!)$.

\begin{proposition}\label{prop: equality of distributions}
Let $\sigma_{\boldsymbol{\alpha}}$ be an $n$-dimensional simplicial cone generated by $\boldsymbol{\alpha} = (\alpha_1, \dots, \alpha_n) \in L_{>0}^n$. Then the cone generating function $\mathcal{G}_{\sigma_{\boldsymbol{\alpha}}}(\mathbf{t}, \mathbf{z})$ of Definition \ref{def: generating} defines an $\widetilde{R}$-valued distribution on $T$ (smooth away from the subtori orthogonal to $\alpha_1, \dots, \alpha_n$) satisfying the Fourier identity
\begin{equation}\label{eq:poisson_cone}
    \vol(L)^{-1} \sum_{\mu \in L^*} e^{2\pi i \langle \mathbf{t}, \mu \rangle} f(\boldsymbol{\alpha})(2\pi i \mu - \mathbf{z}) = \sgn(\boldsymbol{\alpha}) \mathcal{G}_{\sigma_{\boldsymbol{\alpha}}}(\mathbf{t}, \mathbf{z}).
\end{equation}
\end{proposition}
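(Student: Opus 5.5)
The plan is to regard $\mathbf{t}$ as a variable on $V$. We realise the right-hand side as the Fourier series of the $L$-periodisation of a measure supported on the cone, and the left-hand side as the Fourier expansion of that periodisation. Concretely, for $\mathbf{z}$ in the open region $\Re\langle \alpha_j,\mathbf{z}\rangle<0$ for all $j$ (where, by Definition~\ref{def: generating}, the function $\mathcal{G}_{\sigma_{\boldsymbol{\alpha}}}(\mathbf{t},\mathbf{z})$ is the convergent lattice sum over $\widehat{\sigma}_{\boldsymbol{\alpha}}\cap(\mathbf{t}+L)$), consider the function on $V$
\[
\phi_{\mathbf{z}}(v):=\mathbf{1}_{\widehat{\sigma}_{\boldsymbol{\alpha}}}(v)\,e^{\langle v,\mathbf{z}\rangle}.
\]
It is integrable, with exponential decay on the cone. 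Its periodisation $\Phi_{\mathbf{z}}(\mathbf{t})=\sum_{l\in L}\phi_{\mathbf{z}}(\mathbf{t}+l)$ is exactly $\mathcal{G}_{\sigma_{\boldsymbol{\alpha}}}(\mathbf{t},\mathbf{z})$ as a function of $\mathbf{t}\in T$. It is locally constant in the boundary behaviour and smooth off the images of the walls of $\sigma_{\boldsymbol{\alpha}}$, namely the subtori orthogonal to the $\alpha_j$ in the dual description. This gives the claimed distribution on $T$, at least for such $\mathbf{z}$.

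The Fourier coefficients are computed as in the proof of Proposition~\ref{hurwitz}. Parametrise $v=\sum t_j\alpha_j$; the half-open boundary of $\widehat{\sigma}$ has measure zero, so it does not matter. Then
\[
\hat\phi_{\mathbf{z}}(\mu)=\int_{\sigma_{\boldsymbol{\alpha}}}e^{\langle v,\mathbf{z}-2\pi i\mu\rangle}\,dv=|\det A_{\boldsymbol{\alpha}}|\prod_{j}\frac{1}{-\langle \alpha_j,\mathbf{z}-2\pi i\mu\rangle}=\sgn(\boldsymbol{\alpha})\,f(\boldsymbol{\alpha})(2\pi i\mu-\mathbf{z}),
\]
using $\Re\langle\alpha_j,\mathbf{z}\rangle<0$ for convergence. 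Poisson summation $\Phi_{\mathbf{z}}(\mathbf{t})=\vol(L)^{-1}\sum_{\mu\in L^*}\hat\phi_{\mathbf{z}}(\mu)e^{2\pi i\langle\mathbf{t},\mu\rangle}$ then gives \eqref{eq:poisson_cone} after multiplying by $\sgn(\boldsymbol{\alpha})$. Here $\sgn(\boldsymbol{\alpha})^2=1$, and the sign convention $e^{\pm2\pi i\langle\cdot,\mu\rangle}$ is to be matched to the paper's Fourier normalisation.

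The main obstacle is convergence. The Fourier coefficients decay only like $\prod_j|\langle\alpha_j,\mu\rangle|^{-1}$, so the series on the left does not converge absolutely. Poisson summation must therefore be read as an identity of distributions on $T$, i.e.\ after pairing with a test function $g\in C^\infty(T)$. I would handle this by pairing both sides with $g$ and using the rapid decay of $\hat g$: the $\mu$-sum of $\hat\phi_{\mathbf{z}}(\mu)\hat g(-\mu)$ converges absolutely. Equivalently, I would check the identity for the smooth convolution $\phi_{\mathbf{z}}*\rho_\epsilon$ and let $\epsilon\to0$, using the $L^1$ and bounded-variation behaviour of $\phi_{\mathbf{z}}$ near the walls.

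Finally, the identity must be transported into $\widetilde{R}$. Both sides are meromorphic in $\mathbf{z}$, with poles only along the hyperplanes $\langle\alpha_j,\mathbf{z}\rangle\in 2\pi i\,\langle\alpha_j,L^*\rangle$. Each $f(\boldsymbol{\alpha})(2\pi i\mu-\mathbf{z})$ with $\mu\ne0$ is holomorphic near $\mathbf{z}=0$ and so lies in $\mathbb{C}\llbracket\mathbf{z}\rrbracket$. The $\mu=0$ term and $\mathcal{G}_{\sigma_{\boldsymbol{\alpha}}}$ are expanded in $\widetilde{R}$ via the cone $\mathbb{C}_{\sigma}\llbracket\mathbf{z}\rrbracket$ determined by the $\langle\alpha_j,\mathbf{z}\rangle$. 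The paired identity holds on an open set of $\mathbf{z}$ and both sides are given by the same Laurent expansions, so it holds coefficientwise in $\widetilde{R}$ by uniqueness of analytic continuation and expansion.
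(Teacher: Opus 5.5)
You have the right mechanism, and it is the paper's. The paper quotes \cite[Lemma 18]{BV00} for the auxiliary function $F_{\boldsymbol{\alpha}}$, built from the representatives $\mathscr{R}(\mathbf{t},\boldsymbol{\alpha})$ of $L/L_{\boldsymbol{\alpha}}$, and then translates and multiplies by $e^{\langle\mathbf{t},\mathbf{z}\rangle}$. You instead Poisson-sum $\mathbf{1}_{\widehat{\sigma}_{\boldsymbol{\alpha}}}(v)e^{\langle v,\mathbf{z}\rangle}$ directly. Your computation $\hat\phi_{\mathbf{z}}(\mu)=|\det A_{\boldsymbol{\alpha}}|\prod_j\langle\alpha_j,2\pi i\mu-\mathbf{z}\rangle^{-1}=\sgn(\boldsymbol{\alpha})f(\boldsymbol{\alpha})(2\pi i\mu-\mathbf{z})$ is correct. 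With $\hat\phi(\mu)=\int_V\phi(v)e^{-2\pi i\langle v,\mu\rangle}dv$, the standard formula $\sum_l\phi(\mathbf{t}+l)=\vol(L)^{-1}\sum_\mu\hat\phi(\mu)e^{2\pi i\langle\mathbf{t},\mu\rangle}$ gives \eqref{eq:poisson_cone} with no leftover sign ambiguity. Since $\Phi_{\mathbf{z}}\in L^1(T)$ has bounded Fourier coefficients, the distributional identity for fixed $\mathbf{z}$ with $\Re\langle\alpha_j,\mathbf{z}\rangle<0$ follows at once. Doing it by hand also makes the Jacobian $|\det A_{\boldsymbol{\alpha}}|$, and hence the factor $\sgn(\boldsymbol{\alpha})$, transparent.

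\textbf{The gap is in your last paragraph.} You claim $f(\boldsymbol{\alpha})(2\pi i\mu-\mathbf{z})$ is holomorphic at $\mathbf{z}=0$ for every $\mu\neq0$. This is false for $n\geq2$. Since $\alpha_j\in L$ and $\mu\in L^*$, we have $\langle\alpha_j,\mu\rangle\in\mathbb{Z}$, and $\{\mu\in L^*:\langle\alpha_j,\mu\rangle=0\}$ is a sublattice of rank $n-1$. So infinitely many nonzero $\mu$ give terms containing the factor $-\langle\alpha_j,\mathbf{z}\rangle^{-1}$, which are genuinely Laurent at the origin. Your appeal to ``uniqueness of analytic continuation and expansion'' therefore does not show that the $\mu$-sum defines an $\widetilde{R}$-valued distribution at all. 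Nor does it show that the $\widetilde{R}$-expansion of the summed function equals the coefficientwise sum of the termwise expansions. The paper's proof does not spell this transport out either.

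The fix is to sort $\mu\neq0$ by the proper subset $S(\mu):=\{j:\langle\alpha_j,\mu\rangle=0\}$.
\begin{enumerate}
\item For $j\notin S(\mu)$ and $|\langle\alpha_j,\mathbf{z}\rangle|<\pi$, you have $|\langle\alpha_j,2\pi i\mu-\mathbf{z}\rangle|\geq\pi$.
\item Hence, for each of the finitely many $S$, the subsum equals $\det(A_{\boldsymbol{\alpha}})\prod_{j\in S}(-\langle\alpha_j,\mathbf{z}\rangle)^{-1}$ times a power series $H_S$. By Cauchy estimates on a fixed polydisc, the coefficients of $H_S$ are Fourier series in $\mathbf{t}$ with coefficients bounded uniformly in $\mu$, hence distributions on $T$. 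This shows the $\mu$-sum is a well-defined element of $\widetilde{R}$ coefficientwise.
\item After pairing with a test function $g$, uniform convergence turns each $H_S$ into the Taylor series of a holomorphic germ $h_S$.
\item Both sides of the paired identity are then elements of $\Frac$ of the ring of convergent power series. They agree on the nonempty open set near $0$ where $\Re\langle\alpha_j,\mathbf{z}\rangle<0$. By the identity theorem they coincide in $\Frac(\mathbb{C}\llbracket\mathbf{z}\rrbracket)\subset\widetilde{R}$, which is the coefficientwise statement you need.
\end{enumerate}
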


\begin{proof}
Since $\sigma_{\boldsymbol{\alpha}} \subset V_{>0} \cup \{0\}$ belongs to $\mathscr{C}_{\mathrm{lex}}$, each factor $1 - e^{\langle \alpha_j, \mathbf{z} \rangle} = -\langle \alpha_j, \mathbf{z} \rangle(1 + \frac{1}{2}\langle \alpha_j, \mathbf{z} \rangle + \cdots)$ is nonzero in $\mathbb{C}_{\leq_{\mathrm{lex}}}\llbracket \mathbf{z} \rrbracket$ and hence invertible in the field $\widetilde{R}_{\mathbb{C}}$, so $\mathcal{G}_{\sigma_{\boldsymbol{\alpha}}}(\mathbf{t}, \mathbf{z}) \in \widetilde{R}_{\mathbb{C}}$.

To prove \eqref{eq:poisson_cone}, we follow \cite[Lemma 18]{BV00}. Lift to the universal cover $V$ of $T$, and let $V_{\sigma_{\boldsymbol{\alpha}}} \subset V$ be the complement of all $L$-translates of the walls of $\sigma_{\boldsymbol{\alpha}}$. Let $L_{\boldsymbol{\alpha}} := \bigoplus_{j=1}^n \mathbb{Z}\alpha_j \subset L$ be the sublattice generated by $\boldsymbol{\alpha}$. For each $\mathbf{t} \in V_{\sigma_{\boldsymbol{\alpha}}}$, the finite quotient $L/L_{\boldsymbol{\alpha}}$ of cardinality $|L/L_{\boldsymbol{\alpha}}| = |\det(A_{\boldsymbol{\alpha}})|/\vol(L)$ is represented by $\mathscr{R}(\mathbf{t}, \boldsymbol{\alpha}) := (\mathbf{t} - \widehat{P}_{\boldsymbol{\alpha}}) \cap L$. Setting
\[
    F_{\boldsymbol{\alpha}}(\mathbf{t}, \mathbf{z}) := |L/L_{\boldsymbol{\alpha}}|^{-1} \sum_{x \in \mathscr{R}(\mathbf{t}, \boldsymbol{\alpha})} \frac{e^{\langle x, \mathbf{z} \rangle}}{\prod_{j=1}^n (1 - e^{-\langle \alpha_j, \mathbf{z} \rangle})},
\]
\cite[Lemma 18]{BV00} (via Poisson summation on $L$) yields the distributional identity on $V_{\sigma_{\boldsymbol{\alpha}}}$:
\[
    F_{\boldsymbol{\alpha}}(\mathbf{t}, -\mathbf{z}) = \sum_{\mu \in L^*} e^{\langle \mathbf{t}, 2\pi i \mu - \mathbf{z} \rangle} f(\boldsymbol{\alpha})(2\pi i \mu - \mathbf{z}).
\]
Using the bijection $(\mathbf{t} - \widehat{P}_{\boldsymbol{\alpha}}) \cap L \xrightarrow{\sim} \widehat{P}_{\boldsymbol{\alpha}} \cap (\mathbf{t} + L)$, $x \mapsto \mathbf{t} - x$, together with $|L/L_{\boldsymbol{\alpha}}| = \sgn(\boldsymbol{\alpha})\det(A_{\boldsymbol{\alpha}})/\vol(L)$, and multiplying both sides by $e^{\langle \mathbf{t}, \mathbf{z} \rangle}$ gives \eqref{eq:poisson_cone}.
\end{proof}

\begin{remark}\label{rmk cone avoids t}
For a fixed nonzero torsion point $\mathbf{t} \in \overset{\circ}{T}$, any Shintani decomposition $\Phi$ can be slightly deformed into a Shintani decomposition $\Phi'$ so that $\mathbf{t}$ does not lie on the $L$-translates of the walls of any cone in $\Phi'$ (compare \cite[Lemma 5.3]{BHY19}). Thus we may always assume without loss of generality that $\mathbf{t} \in V_{\sigma_{\boldsymbol{\alpha}}} / L$ for all $\sigma_{\boldsymbol{\alpha}} \in \Phi_n$, so that \eqref{eq:poisson_cone} holds pointwise at $\mathbf{t}$.
\end{remark}

\subsection{Explicit construction of the Green current on $X \times T$}\label{sec:direct_green}

Working over $\widetilde{R}_{\mathbb{C}} = \mathbb{C}_{\leq_{\mathrm{lex}}}(\!(\mathbf{z})\!)$, our goal is to construct a Green current for the topological polylogarithm directly on $X \times T$.

\paragraph{Smooth pro-bundles and canonical trivializations.}
Setting $R^{(k)} := R_{\mathbb{C}}/I^{k+1}$ and $\mathcal{L}\mathrm{og}^{(k)} := \mathcal{L}\mathrm{og} \otimes_{\underline{R}_{\mathbb{C}}} \underline{R}^{(k)}$, we define the associated smooth pro-bundles on $T$ by
\[
    \underline{R}_{\mathbb{C}}^\infty := \varprojlim_k (\mathscr{C}^\infty \otimes_{\mathbb{C}} \underline{R}^{(k)}) \cong \mathscr{C}^\infty\llbracket \mathbf{z} \rrbracket, \qquad \mathcal{L}\mathrm{og}^\infty := \varprojlim_k (\mathscr{C}^\infty \otimes_{\mathbb{C}} \mathcal{L}\mathrm{og}^{(k)}) \cong \underline{R}_{\mathbb{C}}^\infty \otimes_{\underline{R}_{\mathbb{C}}} \mathcal{L}\mathrm{og},
\]
and write $\Omega^i \widehat{\otimes} R_{\mathbb{C}} := \varprojlim_k (\Omega^i \otimes_{\mathbb{C}} R^{(k)}) \cong \Omega^i\llbracket \mathbf{z} \rrbracket$ and $\widehat{\Omega}^i \widehat{\otimes} R_{\mathbb{C}} := \varprojlim_k (\widehat{\Omega}^i \otimes_{\mathbb{C}} R^{(k)})$ for the sheaves of $R_{\mathbb{C}}$-valued differential forms and currents. Passing to the flat $R_{\mathbb{C}}$-algebra $\widetilde{R}_{\mathbb{C}} = \mathbb{C}_{\leq_{\mathrm{lex}}}(\!(\mathbf{z})\!)$, we set
\[
    \underline{\widetilde{R}}_{\mathbb{C}}^\infty := \underline{R}_{\mathbb{C}}^\infty \otimes_{\underline{R}_{\mathbb{C}}} \underline{\widetilde{R}}_{\mathbb{C}}, \qquad \widetilde{\mathcal{L}\mathrm{og}}^\infty := \mathcal{L}\mathrm{og}^\infty \otimes_{\underline{R}_{\mathbb{C}}} \underline{\widetilde{R}}_{\mathbb{C}} \cong \underline{\widetilde{R}}_{\mathbb{C}}^\infty \otimes_{\underline{\widetilde{R}}_{\mathbb{C}}} \widetilde{\mathcal{L}\mathrm{og}},
\]
and analogously $\Omega^i \widehat{\otimes} \widetilde{R}_{\mathbb{C}} := (\Omega^i \widehat{\otimes} R_{\mathbb{C}}) \otimes_{R_{\mathbb{C}}} \widetilde{R}_{\mathbb{C}}$ and $\widehat{\Omega}^i \widehat{\otimes} \widetilde{R}_{\mathbb{C}} := (\widehat{\Omega}^i \widehat{\otimes} R_{\mathbb{C}}) \otimes_{R_{\mathbb{C}}} \widetilde{R}_{\mathbb{C}}$.

\begin{lemma}\label{lem:continuous_trivialization}
The smooth map $V \to V \times (\underline{R}_{\mathbb{C}}^\infty)^\times$, $v \mapsto (v, e^{-\langle v, \mathbf{z} \rangle})$, descends to a global smooth section $\varrho_{\mathrm{cont}}$ of $\mathcal{L}\mathrm{og}^\infty$ on $T = V/L$ inducing an isomorphism of smooth pro-bundles with flat connection
\[
    \varrho_{\mathrm{cont}}: (\underline{R}_{\mathbb{C}}^\infty, d - \kappa \wedge \cdot) \xrightarrow{\sim} (\mathcal{L}\mathrm{og}^\infty, \nabla),
\]
where $\kappa := d_V\langle v, \mathbf{z} \rangle = \sum_{j=1}^n z_j\, dv_j \in \Omega^1(T) \widehat{\otimes} R_{\mathbb{C}}$. Moreover, base change along $R_{\mathbb{C}} \hookrightarrow \widetilde{R}_{\mathbb{C}}$ induces $\varrho_{\mathrm{cont}}: (\underline{\widetilde{R}}_{\mathbb{C}}^\infty, d - \kappa \wedge \cdot) \xrightarrow{\sim} (\widetilde{\mathcal{L}\mathrm{og}}^\infty, \nabla)$.
\end{lemma}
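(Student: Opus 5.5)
The plan is to unwind Definition \ref{def log sheaf} on the universal cover. Sections of $\mathcal{L}\mathrm{og}$ over an open $W\subset T$ are $L$-equivariant maps $s\colon \pi^{-1}(W)\to R$, meaning $s(v+l)=\delta_{-l}\,s(v)$. The smooth pro-bundle $\mathcal{L}\mathrm{og}^\infty$ has sections given by smooth maps $s\colon \pi^{-1}(W)\to R^\infty_{\mathbb{C}}$ with the same equivariance, where $\delta_{-l}$ acts through $\exp^*$ as multiplication by $e^{-\langle l,\mathbf{z}\rangle}$. The flat connection $\nabla$ is the one whose horizontal sections are the locally constant ones, i.e.\ $\nabla s = d_V s$ computed on $V$.

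\textbf{Step 1: the section descends.} Put $\tilde s(v):=e^{-\langle v,\mathbf{z}\rangle}\in \mathscr{C}^\infty(V)\llbracket\mathbf{z}\rrbracket$. Then
\[
\tilde s(v+l)=e^{-\langle l,\mathbf{z}\rangle}\tilde s(v)=\delta_{-l}\tilde s(v).
\]
So $\tilde s$ is $L$-equivariant and defines a global smooth section $\varrho_{\mathrm{cont}}$ of $\mathcal{L}\mathrm{og}^\infty$. One must check that this is compatible with the pro-structure: $\tilde s$ reduces to an equivariant section modulo each $I^{k+1}$, since truncating the exponential is compatible with $\exp^*$ modulo $I^{k+1}$.

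\textbf{Step 2: the map is an isomorphism.} Define $\varrho_{\mathrm{cont}}(g):=g\,\tilde s$ for $g\in \underline{R}^\infty_{\mathbb{C}}$, where $g$ is an $L$-invariant function on $V$. The section $\tilde s$ is a unit in $\mathscr{C}^\infty\llbracket\mathbf{z}\rrbracket$, with inverse $e^{\langle v,\mathbf{z}\rangle}$. Hence every equivariant section $s$ can be written as $s=(s\,\tilde s^{-1})\tilde s$, and $s\,\tilde s^{-1}$ is $L$-invariant, so it descends to $T$. This gives an $\underline{R}^\infty_{\mathbb{C}}$-linear isomorphism at every finite level $k$, and hence on the inverse limit.

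\textbf{Step 3: transporting the connection.} By the Leibniz rule,
\[
\nabla(g\tilde s)=dg\,\tilde s+g\,d\tilde s=(dg-\kappa g)\tilde s, \qquad \text{since } d_V e^{-\langle v,\mathbf{z}\rangle}=-\kappa\, e^{-\langle v,\mathbf{z}\rangle}.
\]
Here $\kappa=\sum_j z_j\,dv_j$ is $L$-invariant, so it is a well-defined form on $T$. The transported connection is therefore $d-\kappa\wedge\cdot$. It is flat because $d\kappa=0$ and $\kappa\wedge\kappa=0$.

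\textbf{Step 4: base change.} Both sides are base-changed along $R_{\mathbb{C}}\to\widetilde R_{\mathbb{C}}$. An isomorphism of $R_{\mathbb{C}}$-module sheaves with connection stays an isomorphism after tensoring, because the connection is $\widetilde R$-linear, with $\widetilde R$ acting on constants. This gives the $\widetilde{\mathcal{L}\mathrm{og}}$ statement.

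The only delicate point is the pro-completion bookkeeping in Steps 1 and 2: making sure that $\tilde s$ and its inverse are compatible with the inverse system $R^{(k)}$. This holds because $e^{\pm\langle v,\mathbf{z}\rangle}$ modulo $(\mathbf{z})^{k+1}$ are polynomials in $v$ with smooth coefficients. Everything else is formal.
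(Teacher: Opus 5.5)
Your proposal is correct and follows the paper's proof essentially step for step. The paper likewise checks the equivariance $\tilde s(v+l)=\delta_{-l}\tilde s(v)$, uses that $e^{-\langle v,\mathbf{z}\rangle}$ is a unit with inverse $e^{\langle v,\mathbf{z}\rangle}$, and derives $\nabla(g\tilde s)=(dg-\kappa g)\tilde s$ from $\nabla=d_V$ on the universal cover; your remarks on the pro-structure, on flatness of $d-\kappa\wedge\cdot$, and on the base change to $\widetilde R_{\mathbb{C}}$ fill in details the paper leaves implicit.
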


\begin{proof}
For all $v \in V$ and $l \in L$, we have $(v+l, e^{-\langle v+l, \mathbf{z} \rangle}) = (v+l, e^{-\langle l, \mathbf{z} \rangle} e^{-\langle v, \mathbf{z} \rangle}) = (v, e^{-\langle v, \mathbf{z} \rangle}) \cdot l$, so $v \mapsto (v, e^{-\langle v, \mathbf{z} \rangle})$ descends to a global smooth section $\varrho_{\mathrm{cont}}$ of $\mathcal{L}\mathrm{og}^\infty$ on $T$. Since $e^{-\langle v, \mathbf{z} \rangle}$ is a unit in $\underline{R}_{\mathbb{C}}^\infty$ with inverse $e^{\langle v, \mathbf{z} \rangle}$, multiplication by $\varrho_{\mathrm{cont}}$ is an isomorphism. Finally, since $\nabla = d_V \otimes \id$ on the universal cover $V$, we have
\[
    \nabla(\varrho_{\mathrm{cont}}) = d_V\bigl(e^{-\langle v, \mathbf{z} \rangle}\bigr) = -\bigl(d_V\langle v, \mathbf{z} \rangle\bigr) e^{-\langle v, \mathbf{z} \rangle} = -\kappa\, \varrho_{\mathrm{cont}},
\]
so for any section $f$ of $\underline{R}_{\mathbb{C}}^\infty$, $\nabla(f\,\varrho_{\mathrm{cont}}) = (df - \kappa \wedge f)\,\varrho_{\mathrm{cont}}$.
\end{proof}

\begin{remark}
    On the subgroup $T^{(A)} \subset T_{\mathrm{tors}}$ of torsion points of order invertible in $A$, $\mathcal{L}\mathrm{og}$ admits a canonical multiplicative trivialization $\varrho_{\mathrm{can}}$, which agrees with $\varrho_{\mathrm{cont}}|_{T_{\mathrm{tors}}}$ when $A = \mathbb{C}$ (see \cite[Proposition 3.24 and Lemma 4.2]{BKL18}).
\end{remark}

\paragraph{Invertibility and uniform lower bound of linear forms on $X$.}
For every $y = (y_1, \dots, y_n) \in X \subset V_{>0}$ and every $\mu \in L^*$ (including $\mu = 0$), consider the linear form
\[
    \langle 2\pi i \mu - \mathbf{z}, y \rangle = 2\pi i \langle \mu, y \rangle - \sum_{j=1}^n y_j z_j \in \mathbb{C}\llbracket \mathbf{z} \rrbracket.
\]
Since $y_j > 0$ for all $j = 1, \dots, n$, the homogeneous linear part $\langle \mathbf{z}, y \rangle = \sum_{j=1}^n y_j z_j$ is a nonzero linear form with positive coefficients. Thus:
\begin{itemize}
    \item If $\langle \mu, y \rangle \neq 0$, then $\langle 2\pi i \mu - \mathbf{z}, y \rangle$ has nonzero constant term $2\pi i \langle \mu, y \rangle \in \mathbb{C}^\times$ and is already invertible in $R_{\mathbb{C}} = \mathbb{C}\llbracket \mathbf{z} \rrbracket$:
    \[
        \frac{1}{\langle 2\pi i \mu - \mathbf{z}, y \rangle^{a+1}} = \sum_{k=0}^\infty \binom{k+a}{a} \frac{\langle \mathbf{z}, y \rangle^k}{(2\pi i \langle \mu, y \rangle)^{k+a+1}} \in R_{\mathbb{C}}.
    \]
    \item If $\langle \mu, y \rangle = 0$ (in particular, for $\mu = 0$), then $\langle 2\pi i \mu - \mathbf{z}, y \rangle = -\langle \mathbf{z}, y \rangle$, which is invertible in $\widetilde{R}_{\mathbb{C}}$.
    \item Analytically, for $\mathbf{z} \in \mathbb{C}^n$ with $\Re(z_j) \leq -c < 0$ ($j = 1, \dots, n$), since $2\pi i \langle \mu, y \rangle$ is purely imaginary while $-\langle \Re(\mathbf{z}), y \rangle \geq c \sum_{j=1}^n y_j \geq c > 0$, we have the uniform lower bound
    \begin{equation}\label{eq:uniform_bound}
        \bigl|\langle 2\pi i \mu - \mathbf{z}, y \rangle\bigr| = \sqrt{\bigl(2\pi \langle \mu, y \rangle - \langle \Im(\mathbf{z}), y \rangle\bigr)^2 + \langle \Re(\mathbf{z}), y \rangle^2} \geq \bigl|\langle \Re(\mathbf{z}), y \rangle\bigr| \geq c > 0
    \end{equation}
    for all $y \in X$ and all $\mu \in L^*$, and the Laplace integral
    \[
        \int_0^\infty e^{-u\langle 2\pi i \mu - \mathbf{z}, y \rangle} u^a\, du = \frac{a!}{\langle 2\pi i \mu - \mathbf{z}, y \rangle^{a+1}}
    \]
    converges absolutely and uniformly on $X \times L^*$ for every $a \geq 0$.
\end{itemize}

\paragraph{Theta-form construction and fibre integration.}
For the explicit construction of our Green current we follow a method due to Prader, which uses the Mellin transform of a certain theta form defined below. Remember from Lemma \ref{lem:continuous_trivialization} that under the continuous multiplicative trivialization $\varrho_{\mathrm{cont}}: \underline{\widetilde{R}}_{\mathbb{C}}^\infty \xrightarrow{\sim} \widetilde{\mathcal{L}\mathrm{og}}^\infty$ induced by $v \mapsto (v, e^{-\langle v, \mathbf{z} \rangle})$, the flat connection $\nabla$ on $X \times T$ is given by
\[
    \nabla = d - \kappa \wedge \cdot, \qquad \kappa := \sum_{j=1}^n z_j\, dv_j = d_V\langle v, \mathbf{z} \rangle, \qquad \nabla \eta = e^{\langle v, \mathbf{z} \rangle} d\left(e^{-\langle v, \mathbf{z} \rangle} \eta\right).
\]
For each $\mu \in L^*$, define the closed smooth $n$-form on $V = \mathbb{R}^n$ (with coordinates $\zeta = (\zeta_1, \dots, \zeta_n)$) by
\[
    \alpha_\mu(\zeta) := e^{\langle \zeta, 2\pi i \mu - \mathbf{z} \rangle} d\zeta_1 \wedge \cdots \wedge d\zeta_n \in \Omega^n(V) \widehat{\otimes} \widetilde{R}_{\mathbb{C}}, \qquad d\alpha_\mu = 0
\]
(which is closed since $\dim_{\mathbb{R}} V = n$). Along the smooth map $\Phi: X \times V \times \mathbb{R}_{>0} \to V$, $\Phi(y, v, u) := v - uy$, define the twisted pullback $n$-form
\[
    \theta_\mu(y, v, u) := e^{\langle v, \mathbf{z} \rangle} \Phi^* \alpha_\mu = e^{2\pi i \langle v, \mu \rangle} e^{-u\langle 2\pi i \mu - \mathbf{z}, y \rangle} \bigwedge_{j=1}^n (dv_j - u\, dy_j - y_j\, du).
\]
Since $\theta_\mu$ is $L$-invariant, it descends to $X \times T \times \mathbb{R}_{>0}$, and since $d\alpha_\mu = 0$, it is $\nabla$-closed:
\[
    \nabla \theta_\mu = e^{\langle v, \mathbf{z} \rangle} d(\Phi^* \alpha_\mu) = e^{\langle v, \mathbf{z} \rangle} \Phi^*(d\alpha_\mu) = 0.
\]
Decompose $\theta_\mu = du \wedge \theta_\mu^{(1)} + \theta_\mu^{(2)}$, where $\theta_\mu^{(1)}$ and $\theta_\mu^{(2)}$ have degree $0$ in $du$:
\[
    \theta_\mu^{(1)}(y, v, u) = -e^{2\pi i \langle v, \mu \rangle} e^{-u\langle 2\pi i \mu - \mathbf{z}, y \rangle} \sum_{i=1}^n (-1)^{i-1} y_i \bigwedge_{j \neq i} (dv_j - u\, dy_j),
\]
\[
    \theta_\mu^{(2)}(y, v, u) = e^{2\pi i \langle v, \mu \rangle} e^{-u\langle 2\pi i \mu - \mathbf{z}, y \rangle} \bigwedge_{j=1}^n (dv_j - u\, dy_j).
\]
Then $0 = \nabla \theta_\mu = -du \wedge \nabla_{X \times T} \theta_\mu^{(1)} + du \wedge \partial_u \theta_\mu^{(2)} + \nabla_{X \times T} \theta_\mu^{(2)}$ implies the identity
\[
    \nabla_{X \times T} \theta_\mu^{(1)} = \frac{\partial \theta_\mu^{(2)}}{\partial u}.
\]

\begin{lemma}\label{lem:fibre_integral_green}
For each $\mu \in L^*$, define $\mathscr{G}_\mu(y, v) \in \Omega^{n-1}(X \times T) \widehat{\otimes} \widetilde{R}_{\mathbb{C}}$ by the fibre integral
\[
    \mathscr{G}_\mu(y, v) := -\int_0^\infty \theta_\mu^{(1)}(y, v, u)\, du.
\]
Then:
\begin{enumerate}
    \item $\mathscr{G}_\mu$ satisfies the differential equation
    \[
        \nabla_{X \times T} \mathscr{G}_\mu(y, v) = e^{2\pi i \langle v, \mu \rangle} dv_1 \wedge \cdots \wedge dv_n.
    \]
    \item In the bidegree decomposition $\Omega^{n-1}(X \times T) = \bigoplus_{a=0}^{n-1} \Omega^a(X) \otimes \Omega^{n-1-a}(T)$, we have $\mathscr{G}_\mu = \sum_{a=0}^{n-1} \mathscr{G}_\mu^a$ with
    \[
        \mathscr{G}_\mu^a(y, v) = (-1)^a a! \frac{e^{2\pi i \langle v, \mu \rangle}}{\langle 2\pi i \mu - \mathbf{z}, y \rangle^{a+1}} \sum_{i=1}^n (-1)^{i-1} y_i\, \omega_{i,a}(y, v),
    \]
    where $\bigwedge_{j \neq i} (dv_j - u\, dy_j) = \sum_{a=0}^{n-1} (-u)^a \omega_{i,a}(y, v)$. In particular, the top-$X$-degree component ($a = n-1$) is
    \[
        \mathscr{G}_\mu^{n-1}(y, v) = (-1)^{n-1} (n-1)! \frac{e^{2\pi i \langle v, \mu \rangle}}{\langle 2\pi i \mu - \mathbf{z}, y \rangle^n} \omega_y.
    \]
\end{enumerate}
\end{lemma}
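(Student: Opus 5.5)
The plan is to differentiate under the fibre integral and use the identity $\nabla_{X \times T}\theta_\mu^{(1)} = \partial_u \theta_\mu^{(2)}$ established just before the statement. First we justify interchanging $\nabla_{X\times T}$ with $\int_0^\infty du$. Analytically, for $\mathbf{z}$ with $\Re(z_j) \leq -c < 0$, the bound \eqref{eq:uniform_bound} gives $|e^{-u\langle 2\pi i\mu - \mathbf{z}, y\rangle}| \leq e^{-cu}$ uniformly on $X \times T$. The coefficients of $\theta_\mu^{(1)}$, and of its $y$- and $v$-derivatives, are polynomials in $u$ times this exponential. They are therefore dominated by integrable functions of $u$, so the integral converges and we may differentiate under the integral sign. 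Formally in $\widetilde{R}_{\mathbb{C}}$, the resulting identities are identities of rational functions in $\langle 2\pi i\mu - \mathbf{z}, y\rangle$. We interpret these in $\widetilde{R}_{\mathbb{C}}$ using the invertibility discussion preceding the lemma (two cases: $\langle\mu,y\rangle \neq 0$ or $=0$), and the identity passes from the analytic region to the formal setting by uniqueness of expansions.

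For (1), we have
\[
\nabla_{X\times T}\mathscr{G}_\mu = -\int_0^\infty \partial_u\theta_\mu^{(2)}\,du = \theta_\mu^{(2)}\big|_{u=0} - \lim_{u\to\infty}\theta_\mu^{(2)}.
\]
The limit vanishes by exponential decay. At $u=0$, $\theta_\mu^{(2)} = e^{2\pi i\langle v,\mu\rangle}\, dv_1\wedge\cdots\wedge dv_n$, which gives the claim. One sign point needs care: the decomposition convention $\theta_\mu = du\wedge\theta^{(1)}_\mu + \theta^{(2)}_\mu$ and the sign in $\nabla(du\wedge\theta^{(1)}) = -du\wedge\nabla\theta^{(1)}$ were already fixed in the displayed identity, so we simply use it.

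For (2), we expand $\bigwedge_{j\neq i}(dv_j - u\,dy_j) = \sum_a (-u)^a\omega_{i,a}$, where $\omega_{i,a}$ has $X$-degree $a$. Inserting this into $-\theta_\mu^{(1)}$ and integrating term by term with the Laplace formula $\int_0^\infty e^{-u\langle 2\pi i\mu-\mathbf{z},y\rangle}u^a\,du = a!/\langle 2\pi i\mu-\mathbf{z},y\rangle^{a+1}$ yields the stated $\mathscr{G}_\mu^a$; the two minus signs cancel. For $a=n-1$, we get $\omega_{i,n-1} = \bigwedge_{j\neq i}dy_j$, so $\sum_i(-1)^{i-1}y_i\,\omega_{i,n-1} = \omega_y$. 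This gives the top-degree formula.

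The main obstacle is making the passage from the convergent analytic integral to an honest element of $\Omega^{n-1}(X\times T)\widehat{\otimes}\widetilde{R}_{\mathbb{C}}$ rigorous. The issue is that the $\mu$ with $\langle\mu,y\rangle = 0$ on some locus make the expansion type jump. We would handle this by taking the explicit closed formula in (2) as the definition in $\widetilde{R}_{\mathbb{C}}$, and verifying (1) directly from it as an identity of rational expressions, which holds because it holds analytically on an open set of $\mathbf{z}$.
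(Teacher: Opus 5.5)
Your proposal is correct and follows the paper's proof. Part (1) comes from $\nabla_{X\times T}\theta_\mu^{(1)} = \partial_u\theta_\mu^{(2)}$ and the fundamental theorem of calculus in $u$, using $\theta_\mu^{(2)}\to 0$ as $u\to\infty$ and $\theta_\mu^{(2)}|_{u=0} = e^{2\pi i\langle v,\mu\rangle}dv_1\wedge\cdots\wedge dv_n$. Part (2) comes from expanding $\bigwedge_{j\neq i}(dv_j - u\,dy_j)$ and evaluating the Gamma integrals term by term. Your justification of differentiating under the integral for $\Re(z_j)\le -c<0$, and your transfer to $\widetilde{R}_{\mathbb{C}}$ via identities of rational functions, are extra rigor that the paper leaves implicit.
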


\begin{proof}
(1) Using $\nabla_{X \times T} \theta_\mu^{(1)} = \partial_u \theta_\mu^{(2)}$, we obtain
\[
    \nabla_{X \times T} \mathscr{G}_\mu(y, v) = -\int_0^\infty \frac{\partial \theta_\mu^{(2)}}{\partial u}(y, v, u)\, du = -\left[\theta_\mu^{(2)}(y, v, u)\right]_{u=0}^{u=\infty} = \theta_\mu^{(2)}(y, v, 0) = e^{2\pi i \langle v, \mu \rangle} dv_1 \wedge \cdots \wedge dv_n.
\]
(2) Expanding $\bigwedge_{j \neq i} (dv_j - u\, dy_j) = \sum_{a=0}^{n-1} (-u)^a \omega_{i,a}(y, v)$ and evaluating the Gamma integral $\int_0^\infty e^{-u\langle 2\pi i \mu - \mathbf{z}, y \rangle} u^a\, du = \frac{a!}{\langle 2\pi i \mu - \mathbf{z}, y \rangle^{a+1}}$ gives the formula for $\mathscr{G}_\mu^a(y, v)$. When $a = n-1$, $\omega_{i,n-1} = dy_1 \wedge \cdots \wedge \widehat{dy_i} \wedge \cdots \wedge dy_n$ and $\sum_{i=1}^n (-1)^{i-1} y_i\, \omega_{i,n-1} = \omega_y$.
\end{proof}

\begin{proposition}\label{prop green current}
Let $\vol_T := \vol(L)^{-1} dv_1 \wedge \cdots \wedge dv_n$ be the normalized volume form on $T = V/L$ with $\int_T \vol_T = 1$, and define the $(n-1)$-currents on $X \times T$ by the normalized Fourier series
\[
    \mathscr{G} := \vol(L)^{-1} \sum_{\mu \in L^* \setminus \{0\}} \mathscr{G}_\mu \quad \text{and} \quad \widetilde{\mathscr{G}} := \vol(L)^{-1} \sum_{\mu \in L^*} \mathscr{G}_\mu = \mathscr{G} + \vol(L)^{-1}\mathscr{G}_0.
\]
Then for $\Re(z_j) < 0$, $\mathscr{G}$ and $\widetilde{\mathscr{G}}$ converge unconditionally in the space of currents $\widehat{\Omega}^{n-1}(X \times T)$ (as smooth forms in $y \in X$ valued in distributions on $T$) and satisfy the current differential equations on $X \times T$:
\[
    \nabla \mathscr{G} = \delta_{X \times \{0\}} \vol_T - \vol_T \quad \text{and} \quad \nabla \widetilde{\mathscr{G}} = \delta_{X \times \{0\}} \vol_T.
\]
In particular, $\widetilde{\mathscr{G}}$ represents the topological polylogarithm class $\pol \in H^{n-1}(T \setminus \{0\}, \Gamma; \widetilde{\mathcal{L}\mathrm{og}})$ of Definition \ref{def polylog}. Furthermore, for any constant nonzero torsion section $\mathbf{t}: X \to X \times \overset{\circ}{T}$, we have $\mathbf{t}^*\vol_T = 0$, so both $\mathbf{t}^*\mathscr{G}$ and $\mathbf{t}^*\widetilde{\mathscr{G}}$ are closed on $X$ ($d_X(\mathbf{t}^*\mathscr{G}) = d_X(\mathbf{t}^*\widetilde{\mathscr{G}}) = 0$) and are given by
\begin{equation}\label{eq:t_pullback_G}
    \mathbf{t}^* \mathscr{G} = \frac{(-1)^{n-1} (n-1)!}{\vol(L)} \sum_{\mu \in L^* \setminus \{0\}} \frac{e^{2\pi i \langle \mathbf{t}, \mu \rangle}}{\langle 2\pi i \mu - \mathbf{z}, y \rangle^n} \omega_y, \qquad \vol(L)^{-1}\mathbf{t}^* \mathscr{G}_0 = -\frac{(n-1)!}{\vol(L)} \frac{\omega_y}{\langle \mathbf{z}, y \rangle^n},
\end{equation}
and
\begin{equation}\label{eq:t_pullback_Gtilde}
    \mathbf{t}^* \widetilde{\mathscr{G}} = \frac{(-1)^{n-1} (n-1)!}{\vol(L)} \sum_{\mu \in L^*} \frac{e^{2\pi i \langle \mathbf{t}, \mu \rangle}}{\langle 2\pi i \mu - \mathbf{z}, y \rangle^n} \omega_y.
\end{equation}
\end{proposition}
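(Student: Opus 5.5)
The strategy is to assemble the statement from Lemma \ref{lem:fibre_integral_green} term by term, using Fourier analysis on $T$ for the delta current and Corollary \ref{cor:explicit_residue} for the identification with $\pol$.

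\textbf{Convergence.} Fix $\mathbf{z}$ with $\Re(z_j)\le -c<0$. By Lemma \ref{lem:fibre_integral_green}(2), each component $\mathscr{G}^a_\mu$ is $e^{2\pi i\langle v,\mu\rangle}$ times a coefficient bounded, via \eqref{eq:uniform_bound}, by $a!\,|\langle 2\pi i\mu-\mathbf{z},y\rangle|^{-(a+1)}$, times smooth forms in $y$. Pair against a test form $\eta$ on $X\times T$ and integrate by parts in $v$. This makes the $\mu$-th term $O(|\mu|^{-N})$ uniformly on compacts in $y$, and the same holds for all $y$-derivatives, since differentiating in $y$ only produces more factors $\langle 2\pi i\mu-\mathbf{z},y\rangle^{-1}$ and polynomial factors in $\mu$. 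Hence the Fourier series converges unconditionally in $\widehat{\Omega}^{n-1}(X\times T)$, as smooth-in-$y$ forms with values in distributions on $T$.

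\textbf{Current equation.} Apply Lemma \ref{lem:fibre_integral_green}(1) termwise. Since $\nabla$ is continuous on currents, we get
\[
\nabla\widetilde{\mathscr G}=\vol(L)^{-1}\sum_{\mu\in L^*}e^{2\pi i\langle v,\mu\rangle}\,dv_1\wedge\cdots\wedge dv_n .
\]
By Poisson summation on $T$, $\vol(L)^{-1}\sum_\mu e^{2\pi i\langle v,\mu\rangle}=\delta_0$ as a distribution on $T$ against the measure $dv$. Therefore $\nabla\widetilde{\mathscr G}=\delta_{X\times\{0\}}\vol_T$. Removing the $\mu=0$ term, which contributes exactly $\vol_T$, gives the equation for $\mathscr{G}$.

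\textbf{Identification with $\pol$.} Transport through $\varrho_{\mathrm{cont}}$ (Lemma \ref{lem:continuous_trivialization}) to $\widetilde{\mathcal{L}\mathrm{og}}$. Since $X$ is contractible, the restriction of $\widetilde{\mathscr G}$ to $X\times\overset{\circ}{T}$ is a $\nabla$-closed smooth form and defines a class in $H^{n-1}(\overset{\circ}{T},\Gamma;\widetilde{\mathcal{L}\mathrm{og}})$. Two further points need checking:
\begin{itemize}
\item $\Gamma$-equivariance of $\widetilde{\mathscr G}$. The map $(y,v,u)\mapsto(\gamma^{-1}y\lambda,\gamma^{-1}v,u/\lambda)$ preserves $\Phi$ up to the $\gamma$-action. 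Combined with the rescaling invariance in Corollary \ref{corollary to hurwitz}, this reduces to the statement that the fibre integral over $u$ is insensitive to reparametrisation.
\item The residue. Corollary \ref{cor:explicit_residue}(3) and Remark \ref{delta distribution}, applied fibrewise with $Z=\{0\}$, read off $\res=1\in\widetilde{R}$ from the delta term.
\end{itemize}
By the uniqueness in Corollary \ref{cor: localization coh} and Definition \ref{def polylog}, $\widetilde{\mathscr G}$ therefore represents $\pol$.

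\textbf{Formulas after pullback.} Pull back along a constant section $\mathbf{t}\ne0$. Then $dv_j\mapsto 0$, so $\mathbf{t}^*\vol_T=0$ and $\mathbf{t}^*\delta=0$ near $\mathbf{t}$, and closedness follows. Only the $a=n-1$ component survives, and Lemma \ref{lem:fibre_integral_green}(2) gives \eqref{eq:t_pullback_G} and \eqref{eq:t_pullback_Gtilde}. For $\mu=0$, $\langle-\mathbf{z},y\rangle^{n}=(-1)^n\langle\mathbf{z},y\rangle^n$ turns the sign $(-1)^{n-1}$ into $-1$.

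\textbf{Main obstacle.} The hardest step is justifying the termwise exchanges and the pullback of a series that is only a distribution in $v$. The plan is to pull back only after restricting to a neighbourhood of $\mathbf{t}$ in $\overset{\circ}{T}$. There the partial sums converge to a smooth form, which one shows by Abel or Cesàro summation in $\mu$, or by the Poisson identity \eqref{eq:poisson_cone} cone by cone, with $\mathbf{t}$ generic as in Remark \ref{rmk cone avoids t}. Smooth convergence lets pullback commute with the sum.
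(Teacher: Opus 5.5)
Your outline is the paper's proof, step for step:
\begin{itemize}
\item distributional convergence from the bound \eqref{eq:uniform_bound} (your integration by parts in $v$ is just the proof of the polynomial-growth criterion the paper quotes);
\item termwise use of Lemma~\ref{lem:fibre_integral_green}(1) together with the Fourier expansion of $\delta_{X\times\{0\}}\vol_T$;
\item Corollary~\ref{cor:explicit_residue} and Remark~\ref{delta distribution} for the residue;
\item $\mathbf t^*dv_j=0$, which leaves only the $a=n-1$ component.
\end{itemize}
Your $\Gamma$-equivariance check and the $\mu=0$ sign computation are correct extras. The genuine gap is in how you propose to justify the pullback, together with your claim that $\widetilde{\mathscr G}$ restricts to a smooth $\nabla$-closed form on $X\times\overset{\circ}{T}$. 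For $n\ge 2$ this is false. So no Abel or Ces\`aro summation can give a smooth limit near $X\times\{\mathbf t\}$, and the step ``smooth convergence lets pullback commute with the sum'' is unavailable. (The paper itself just applies $\mathbf t^*$ termwise.)

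To see why, insert $a!\,\langle 2\pi i\mu-\mathbf z,y\rangle^{-(a+1)}=\int_0^\infty u^a e^{-u\langle 2\pi i\mu-\mathbf z,y\rangle}\,du$ and use $\sum_{\mu\in L^*}e^{2\pi i\langle v-uy,\mu\rangle}=\vol(L)\sum_{l\in L}\delta_l(v-uy)$. This shows that $\widetilde{\mathscr G}$ is, up to sign, the fibre integral over $u$ of $e^{\langle v,\mathbf z\rangle}\Phi^*\bigl(\sum_{l\in L}e^{-\langle l,\mathbf z\rangle}\delta_l\bigr)$. In other words, it is the current of integration over the $n$-dimensional immersed submanifold $\{(y,\,uy \bmod L): y\in X,\ u>0\}$ of the $(2n-1)$-manifold $X\times T$, weighted by $e^{u\langle\mathbf z,y\rangle}$.

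Its boundary $X\times\{0\}$ is what produces $\nabla\widetilde{\mathscr G}=\delta_{X\times\{0\}}\vol_T$. But the submanifold has codimension $n-1$ and is dense, so $\widetilde{\mathscr G}$ is not given by an $L^1_{\mathrm{loc}}$ form on any open subset. Concretely, for fixed $y$ the coefficients $\langle 2\pi i\mu-\mathbf z,y\rangle^{-(a+1)}$ do not decay along the infinitely many $\mu$ with $\langle\mu,y\rangle$ bounded.

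Correspondingly, $\mathbf t^*\widetilde{\mathscr G}$ is not smooth on $X$. It is the sum of point masses $(-1)^{n-1}\sum_{\beta\in(\mathbf t+L)\cap V_{>0}}e^{\langle\beta,\mathbf z\rangle}\,\delta_{\beta/\Vert\beta\Vert}$. So \eqref{eq:t_pullback_Gtilde} holds only as a distributional identity with a suitably regularized $\mu$-sum: paired with a test function, its terms in general decay no faster than $\Vert\mu\Vert^{-n}$, which is not summable over $L^*$.

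The repair is to reverse the order of operations.
\begin{enumerate}
\item First pair with $\phi\in C_c^\infty(X)$, or integrate over a simplex $X_{\boldsymbol\alpha}$. By Poisson summation this gives $v\mapsto(-1)^{n-1}\sum_{\beta\in(v+L)\cap V_{>0}}\phi(\beta/\Vert\beta\Vert)\,e^{\langle\beta,\mathbf z\rangle}$.
\item This function is smooth near any $\mathbf t\neq 0$. For $\phi=\mathbf 1_{X_{\boldsymbol\alpha}}$ it is smooth near generic $\mathbf t$, by Proposition~\ref{prop: equality of distributions} and Remark~\ref{rmk cone avoids t}.
\item Its value at $v=\mathbf t$ then defines the slice $\mathbf t^*\widetilde{\mathscr G}$.
\end{enumerate}
This is essentially how the proof of Theorem~\ref{main theorem} proceeds. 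Your cone-by-cone Poisson alternative points in this direction, but it yields smoothness of the $y$-integrated function, not of the form.

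Likewise, for the identification with $\pol$ you should work with a $\nabla$-closed current on $X\times\overset{\circ}{T}$ rather than a smooth form. The current cocone of Theorem~\ref{residue currents} and the proof of Corollary~\ref{cor:explicit_residue}(2) go through unchanged in that setting.
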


\begin{proof}
By the uniform lower bound \eqref{eq:uniform_bound}, for $\Re(z_j) \leq -c < 0$ the coefficients $\langle 2\pi i \mu - \mathbf{z}, y \rangle^{-(a+1)}$ and all of their $y$-derivatives are smooth on $X$ and bounded in $\mu \in L^*$ by a polynomial in $\Vert \mu \Vert$ (in fact, uniformly bounded for order $0$ in $y$). Since a Fourier series $\sum_{\mu \in L^*} c_\mu e^{2\pi i \langle v, \mu \rangle}$ on the compact torus $T = V/L$ converges in the space of distributions if and only if $c_\mu$ has at most polynomial growth in $\Vert \mu \Vert$, the series $\mathscr{G}$ and $\widetilde{\mathscr{G}}$ converge unconditionally in $\widehat{\Omega}^{n-1}(X \times T)$. Since $\nabla$ is continuous on currents and $\delta_{X \times \{0\}} \vol_T = \sum_{\mu \in L^*} e^{2\pi i \langle v, \mu \rangle} \vol_T = \vol(L)^{-1} \sum_{\mu \in L^*} e^{2\pi i \langle v, \mu \rangle} dv_1 \wedge \cdots \wedge dv_n$, Lemma \ref{lem:fibre_integral_green}(1) yields $\nabla \mathscr{G} = \delta_{X \times \{0\}} \vol_T - \vol_T$ and $\nabla \widetilde{\mathscr{G}} = \delta_{X \times \{0\}} \vol_T$, so $\widetilde{\mathscr{G}}$ has residue $\delta_0$ along $X \times \{0\}$ by Corollary \ref{cor:explicit_residue} and Remark \ref{delta distribution}. Finally, since $\mathbf{t}^*(dv_j) = 0$ for all $j = 1, \dots, n$, we have $\mathbf{t}^*\vol_T = 0$ and all bidegree components $\mathscr{G}_\mu^a$ with $a < n-1$ vanish under $\mathbf{t}^*$, leaving only the $a = n-1$ component from Lemma \ref{lem:fibre_integral_green}(2).
\end{proof}

\subsection{Evaluation and comparison of polylogarithm and Shintani classes}\label{section evaluation and comparison}

Recall from Definition \ref{def shintani class} that for a Shintani decomposition $\Phi$ and a nonzero torsion point $\mathbf{t} \in \overset{\circ}{T}$ stabilized by $\Gamma$ (which we may assume lies outside the walls of the cones of $\Phi$ by Remark \ref{rmk cone avoids t}), the assignment $\langle \boldsymbol{\alpha} \rangle \mapsto \rho_N\bigl(\sgn(\boldsymbol{\alpha})\mathcal{G}_{\sigma_{\boldsymbol{\alpha}}}(\mathbf{t}, \mathbf{z})\bigr)$ defines the Shintani class
\[
    [\mathcal{G}] \in H^{n-1}\bigl(\Hom_{\mathbb{Z}}(C_\bullet(\Phi/\Gamma), \widetilde{R}_\Gamma)\bigr) \cong H^{n-1}(\Gamma, \widetilde{R}_\Gamma).
\]
Similarly, the assignment $\langle \boldsymbol{\alpha} \rangle \mapsto \rho_N\bigl(\vol(L)^{-1} f(\boldsymbol{\alpha})(\mathbf{z})\bigr)$ (or on oriented simplices $\langle \overline{\sigma}_{\boldsymbol{\alpha}} \rangle \mapsto \rho_N\bigl(\vol(L)^{-1}\sgn(\boldsymbol{\alpha}) f(\boldsymbol{\alpha})(\mathbf{z})\bigr)$) defines the \textit{polar cocycle class} $[\Psi_P] \in H^{n-1}(\Gamma, \widetilde{R}_\Gamma)$. By Lemma \ref{Lemma: quasi iso}, both classes are independent of the choice of Shintani decomposition $\Phi$.

At the torsion point $\mathbf{t} \in \overset{\circ}{T}$, the canonical multiplicative trivialization $\varrho_{\mathrm{can}}$ (which agrees with $\varrho_{\mathrm{cont}}|_{\mathbf{t}}$) identifies $\mathbf{t}^*\widetilde{\mathcal{L}\mathrm{og}} \cong \underline{\widetilde{R}}$. Our fixed orientation $J$ induces the evaluation map $\ev: H^{n-1}(\mathcal{S}^1, \widetilde{R}) \to \widetilde{R}_\Gamma$, given on a closed $(n-1)$-form $\eta$ on $\mathcal{S}^1 = V^1/\Gamma \cong X/\Gamma$ by
\[
    \ev([\eta]) := \rho_N\left(\int_{\mathcal{S}^1} \eta\right),
\]
which coincides under the de Rham--Betti comparison with the Kronecker pairing \eqref{eq: isom} against the fundamental class \eqref{eq: fundamental class}.

\begin{theorem}\label{main theorem}
Let $\mathbf{t} \in \overset{\circ}{T}$ be a nonzero torsion point stabilized by $\Gamma$. Let $\Eis(\mathbf{t}) := \mathbf{t}^*\pol = [\mathbf{t}^*\widetilde{\mathscr{G}}] \in H^{n-1}(\Gamma, \widetilde{R}_\Gamma)$ be the Eisenstein class obtained by pulling back $\pol$ along $\mathbf{t}$, and let $[\mathbf{t}^*\mathscr{G}] \in H^{n-1}(\Gamma, \widetilde{R}_\Gamma)$ be the cohomology class of the non-zero Fourier terms of the Green current. Then in $H^{n-1}(\Gamma, \widetilde{R}_\Gamma)$ we have
\begin{equation}\label{eq: equality of classes}
    [\mathbf{t}^*\mathscr{G}] = (-1)^{n-1}[\mathcal{G}] + [\Psi_P] \quad \text{and} \quad \Eis(\mathbf{t}) = [\mathbf{t}^*\widetilde{\mathscr{G}}] = (-1)^{n-1}[\mathcal{G}],
\end{equation}
and under the evaluation map \eqref{eq: isom} we have
\begin{equation}\label{eq: equality of evaluations}
    \ev([\mathbf{t}^*\mathscr{G}]) = \ev\bigl((-1)^{n-1}[\mathcal{G}] + [\Psi_P]\bigr) = (-1)^{n-1} \sum_{k \geq 0} \zeta(\mathbf{t}, L, -k) \frac{w^k}{(k!)^n} \in \mathbb{Q}\llbracket w \rrbracket.
\end{equation}
\end{theorem}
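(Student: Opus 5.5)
The plan is to compare the two classes cochain-by-cochain on the simplicial decomposition $\Phi^X$ of $X$. I will then evaluate on the fundamental class using the Shintani decomposition of the partial zeta function.

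First, I will realize the de Rham class $[\mathbf{t}^*\widetilde{\mathscr{G}}]$ on $\mathcal{S}^1 \cong X/\Gamma$ as a simplicial cocycle on $C_\bullet(\Phi/\Gamma)$. This uses the de Rham--Betti comparison: a closed $\Gamma$-invariant $(n-1)$-form $\eta$ on $X$ goes to the cochain $\langle \overline{\sigma}_{\boldsymbol{\alpha}} \rangle \mapsto \rho_N(\int_{X_{\boldsymbol{\alpha}}} \eta)$, and this is compatible with $\ev$ and \eqref{eq: isom}. Since $\mathbf{t}^*\widetilde{\mathscr{G}}$ is a top-degree form on the $(n-1)$-dimensional $X$, it is automatically closed (Proposition \ref{prop green current}). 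The only issue is that its value on each simplex must be computed as an element of $\widetilde{R}$.

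Second, for an oriented maximal cone $\sigma_{\boldsymbol{\alpha}}$ I will integrate \eqref{eq:t_pullback_Gtilde} termwise over $X_{\boldsymbol{\alpha}}$. Termwise integration is justified for $\Re(z_j)<0$ by the uniform bound \eqref{eq:uniform_bound}; convergence of the resulting $\mu$-sum is understood in the distributional sense of Proposition \ref{prop: equality of distributions}, and is pointwise at $\mathbf{t}$ by Remark \ref{rmk cone avoids t}. For each $\mu$, Hurwitz's formula (Proposition \ref{hurwitz}) applied with $\mathbf{z}$ replaced by $2\pi i\mu - \mathbf{z}$ gives $(n-1)!\int_{X_{\boldsymbol{\alpha}}} \omega_y/\langle 2\pi i\mu-\mathbf{z},y\rangle^n = f(\boldsymbol{\alpha})(2\pi i\mu - \mathbf{z})$. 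The hypothesis of Proposition \ref{hurwitz} holds here because $\langle 2\pi i \mu - \mathbf{z}, \alpha_j\rangle \neq 0$ when $\Re \mathbf{z}<0$. Summing over $\mu$ and applying the Poisson identity \eqref{eq:poisson_cone} then yields
\[
\int_{X_{\boldsymbol{\alpha}}} \mathbf{t}^*\widetilde{\mathscr{G}} = (-1)^{n-1}\sgn(\boldsymbol{\alpha})\,\mathcal{G}_{\sigma_{\boldsymbol{\alpha}}}(\mathbf{t},\mathbf{z}).
\]
The $\mu=0$ term on its own gives $-\vol(L)^{-1}f(\boldsymbol{\alpha})(\mathbf{z})$, up to the sign convention relating $f(\boldsymbol{\alpha})(-\mathbf{z})$ to $f(\boldsymbol{\alpha})(\mathbf{z})$, which I will track with $(-1)^n$. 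Subtracting it gives $[\mathbf{t}^*\mathscr{G}] = (-1)^{n-1}[\mathcal{G}] + [\Psi_P]$, after matching the sign so that it agrees with the definition of $\Psi_P$.

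These identities hold as meromorphic functions for $\Re \mathbf{z}<0$. I will transport them to $\widetilde{R}$ by expanding both sides as cone-adapted Laurent series; the poles of both sides lie only along the hyperplanes $\langle \alpha_j,\mathbf{z}\rangle = 0$ with totally positive $\alpha_j$, and these lie in $\mathscr{C}_{\mathrm{lex}}$. This identification is the main obstacle: identifying the analytic value with an element of $\widetilde{R}$ consistently, and making the exchange of $\sum_\mu$ with $\int_{X_{\boldsymbol{\alpha}}}$ rigorous near the boundary walls. For the latter I would first try to pair against a smooth partition of unity and use uniform convergence in $y$. Applying $\rho_N$ then turns these equalities of cochains into equalities of classes. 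The identification with $\Eis(\mathbf{t})$ uses Proposition \ref{prop green current} together with the trivialization $\varrho_{\mathrm{can}}=\varrho_{\mathrm{cont}}|_{\mathbf{t}}$.

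Third, for \eqref{eq: equality of evaluations} I will sum over $\overline{\sigma}\in \Phi_n/\Gamma$ using the fundamental class \eqref{eq: fundamental class}. I expect the $\Psi_P$ contribution of $\widetilde{\mathscr{G}}$ to evaluate, via Corollary \ref{corollary to hurwitz}, to $\rho_N$ of $\pm\vol(L)^{-1}/N(\mathbf{z})$, which lies in the $w^{-1}$ coefficient. The remaining task is to compute $\rho_N\bigl(\sum_{\sigma}\mathcal{G}_\sigma(\mathbf{t},\mathbf{z})\bigr)$. For this I will write $\mathcal{G}_\sigma$ via Proposition \ref{prop: fac_shintani} as a sum of Shintani generating functions evaluated at $-\mathbf{z}$, and apply Lemma \ref{Lemma: gen Mellin} along the diagonal. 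The coefficient of $z_1^k\cdots z_n^k$ is $(-1)^{nk}\zeta_\sigma(\mathbf{t},(-k,\dots,-k))/(k!)^n$ after analytic continuation, the polar parts being removed. The signs $(-1)^{nk}$ should cancel against the orientation convention $\mathbf{z}\mapsto -\mathbf{z}$. Proposition \ref{prop: partial zeta decomp} then sums these to $\zeta(\mathbf{t},L,-k)$. Rationality follows because each coefficient is a $\mathbb{Q}$-combination of Bernoulli-type quantities coming from rational $\mathbf{t}$ and $\mathbb{Q}$-linear data, and is moreover $\mathrm{Gal}$-invariant.
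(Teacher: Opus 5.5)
Your first two steps follow the paper's argument: cochains on $\Phi^X$, Hurwitz term by term, Poisson summation cone by cone, and the $\mu=0$ term producing $\Psi_P$. The evaluation step, however, has a genuine gap. The paper's proof invokes Lemma \ref{Lemma: gen Mellin} in exactly the same way, so following it will not close the gap.

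\textbf{The evaluation step.} Lemma \ref{Lemma: gen Mellin} needs $f\in C^\infty(\mathbb{R}^n_{\geq 0})$, but $\mathcal{G}_\sigma(\mathbf{t},-\mathbf{t}')$ blows up at $\mathbf{t}'=0$. ``Removing the polar parts'' does not repair this. Write $\mathcal{G}_\sigma=H(\mathbf{z})/\prod_j\langle\alpha_j,\mathbf{z}\rangle$ with $H\in\mathbb{K}\llbracket\mathbf{z}\rrbracket$. The degree-$nk$ component $H_{nk+n}/\prod_j\langle\alpha_j,\mathbf{z}\rangle$ is itself a non-polynomial rational function, and its diagonal coefficient depends on how it is expanded.
\begin{itemize}
\item In $\widetilde{R}$ every $\langle\alpha_j,\mathbf{z}\rangle$ has lex-leading term $J_n(\alpha_j)z_n$. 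So on such functions $\rho_N$ is the single sector term $G\mapsto\rho_{N,n}(G\circ\theta_n)$ of Section \ref{subsection:CDG comparison}.
\item Shintani's sector decomposition shows that near $\mathbf{s}=(-k,\dots,-k)$ the sector $\{t'_i\geq t'_j\ \forall j\}$ enters with weight $(s_i+k)/\sum_j(s_j+k)$. Hence the multivariable value there is indeterminate, and the diagonal value $\zeta_\sigma(\mathbf{t},(s,\dots,s))|_{s=-k}$ is, up to $(k!)^n$, the $w^k$-coefficient of the \emph{average} $\Theta(\mathcal{G}_\sigma)$.
\item These differ for individual cones. Take $n=2$, $k=0$, $a=\langle\alpha_1,\mathbf{z}\rangle$, $b=\langle\alpha_2,\mathbf{z}\rangle$. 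The degree-$0$ part of $\mathcal{G}_{\sigma_{\boldsymbol\alpha}}$ is $\sum_{\mathbf{x}\in R^P_{\boldsymbol\alpha}}\bigl(\tfrac12B_2(x_1)\tfrac{a}{b}+B_1(x_1)B_1(x_2)+\tfrac12B_2(x_2)\tfrac{b}{a}\bigr)$. The lex constant term of $a/b$ is $J_2(\alpha_1)/J_2(\alpha_2)$, whereas Shintani's formula has $\tfrac12\bigl(J_1(\alpha_1)/J_1(\alpha_2)+J_2(\alpha_1)/J_2(\alpha_2)\bigr)$ in its place.
\end{itemize}
So your identification of the coefficient of $z_1^k\cdots z_n^k$ with $\zeta_\sigma(\mathbf{t},(-k,\dots,-k))/(k!)^n$ fails cone by cone. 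Lemma \ref{lem:CDG_Theta_commutes} does not help either. The function $(a/b)\circ\theta_1=\frac{J_1(\alpha_1)+J_2(\alpha_1)z_2}{J_1(\alpha_2)+J_2(\alpha_2)z_2}$ has constant term $J_1(\alpha_1)/J_1(\alpha_2)$, because substituting $\theta_1$ into the lex series gives the expansion at $z_2=\infty$ instead.

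What is missing is an argument that the sector terms coincide after summing over $\Phi_n/\Gamma$. For $n=2$ this can be done as follows.
\begin{itemize}
\item For a degree-$2k$ component $h$, the $u^k$-coefficient of $h(u,1)$ at $u=0$ (the lex term) and at $u=\infty$ (the $\theta_1$ term) differ by residues of $h(u,1)u^{-k-1}\,du$ at the wall poles $u=-J_2(\alpha)/J_1(\alpha)$.
\item These residues cancel between adjacent cones. By \eqref{eq: cocycle}, $\mathcal{G}_\sigma+\mathcal{G}_{\sigma'}$ is the generating function of the cone spanned by the outer rays, so it has no pole along the common wall.
\item The residue terms are $\Gamma$-invariant. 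Using Remark \ref{rmk cone avoids t} so that $\mathcal{G}_{\gamma\sigma}$ is the $\gamma$-translate of $\mathcal{G}_\sigma$, the two end walls of a fundamental domain cancel as well.
\end{itemize}

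\textbf{Rationality.} The same point undermines your rationality step. The coefficients are not termwise $\mathbb{Q}$-combinations of Bernoulli numbers; they involve quantities like $J_2(\alpha_1)/J_2(\alpha_2)$. Moreover, $\tau\in\Gal(F^{\mathrm{Gal}}/\mathbb{Q})$ permutes the coordinates and hence changes the lex order. Galois invariance therefore amounts to independence of the ordering of the embeddings, which follows only once the identification with the intrinsic $\zeta(\mathbf{t},L,-k)$ is proved for every ordering.

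\textbf{Two smaller points.}

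First, Corollary \ref{corollary to hurwitz} integrates over all of $X$, i.e.\ over all of $\Phi_n$. But $\ev$ sums only over the fundamental domain $\Phi_n/\Gamma$, so that corollary does not compute the polar contribution. In fact $f(\boldsymbol\alpha)\in z_n^{-n}\mathbb{K}\llbracket z_1/z_n,\dots,z_{n-1}/z_n\rrbracket$ contains no diagonal monomial. Hence $\rho_N(f(\boldsymbol\alpha))=0$ for every totally positive $\boldsymbol\alpha$, and $\ev([\Psi_P])=0$, even though $\rho_N(1/N(\mathbf{z}))=w^{-1}$. The evaluation lands in $\mathbb{K}\llbracket w\rrbracket$ simply because the degree $-n$ part of $\mathcal{G}_{\sigma_{\boldsymbol\alpha}}$ is $(-1)^n\vol(L)^{-1}\sgn(\boldsymbol\alpha)f(\boldsymbol\alpha)(\mathbf{z})$, which $\Psi_P$ cancels at the cochain level.

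Second, there is no uniform convergence in $y$ to be had for the obstacle you flagged. The integral $(n-1)!\int_X\phi\,\omega_y\,\langle2\pi i\mu-\mathbf{z},y\rangle^{-n}$ is the Fourier transform at $\mu$ of $v\mapsto\mathbf{1}_{V_{>0}}(v)\,\phi(v/\Vert v\Vert)\,e^{\langle v,\mathbf{z}\rangle}$. Poisson summation over $\mathbf{t}+L$ then shows that, as a current on $X$,
\[
\mathbf{t}^*\widetilde{\mathscr{G}}=(-1)^{n-1}\sum_{\beta\in(\mathbf{t}+L)\cap V_{>0}}e^{\langle\beta,\mathbf{z}\rangle}\,\delta_{\beta/\Vert\beta\Vert},
\]
an atomic measure rather than a smooth form. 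By Remark \ref{rmk cone avoids t}, no atom lies on $\partial X_{\boldsymbol\alpha}$. Pairing directly with the indicator of $X_{\boldsymbol\alpha}$ therefore gives $\int_{X_{\boldsymbol\alpha}}\mathbf{t}^*\widetilde{\mathscr{G}}=(-1)^{n-1}\mathcal{G}_{\sigma_{\boldsymbol\alpha}}(\mathbf{t},\mathbf{z})$ for $\Re(z_j)<0$. This is the clean way to run your second step.
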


\begin{proof}
Under the radial diffeomorphism $\psi: \mathcal{S}^1 = V^1/\Gamma \xrightarrow{\sim} X/\Gamma$, the cohomology groups $H^{n-1}(X/\Gamma, \widetilde{R}_\Gamma) \cong H^{n-1}(\mathcal{S}^1, \widetilde{R}_\Gamma) \cong H^{n-1}(\Gamma, \widetilde{R}_\Gamma)$ are canonically identified with the simplicial cohomology $H^{n-1}\bigl(\Hom_{\mathbb{Z}}(C_\bullet(\Phi/\Gamma), \widetilde{R}_\Gamma)\bigr)$ via the triangulation $\Phi^X$ induced by $\Phi$.

Let $\sigma_{\boldsymbol{\alpha}} \in \Phi_n$ be an $n$-dimensional simplicial cone with generator $\boldsymbol{\alpha} = (\alpha_1, \dots, \alpha_n) \in L_{>0}^n$, defining the oriented simplex $\langle \sigma_{\boldsymbol{\alpha}} \rangle = \sgn(\boldsymbol{\alpha})\langle \boldsymbol{\alpha} \rangle$ and the compact spherical simplex $X_{\boldsymbol{\alpha}} = \sigma_{\boldsymbol{\alpha}} \cap X$. Because the Fourier series $\mathscr{G}(y,v)$ and $\widetilde{\mathscr{G}}(y,v)$ are smooth in $y \in X$ with coefficients uniformly bounded in $\mu \in L^*$ by \eqref{eq:uniform_bound}, integrating over the compact simplex $X_{\boldsymbol{\alpha}} \subset X$ commutes with the distributional sum over $\mu \in L^*$. Applying Hurwitz's formula (Proposition \ref{hurwitz}) term-by-term and evaluating at $v = \mathbf{t} \in V_{\sigma_{\boldsymbol{\alpha}}}/L$ (Remark \ref{rmk cone avoids t}) gives
\[
    \int_{X_{\boldsymbol{\alpha}}} \mathbf{t}^*\widetilde{\mathscr{G}} = \frac{(-1)^{n-1}}{\vol(L)} \sum_{\mu \in L^*} e^{2\pi i \langle \mathbf{t}, \mu \rangle} \sgn(\boldsymbol{\alpha}) f(\boldsymbol{\alpha})(2\pi i \mu - \mathbf{z}),
\]
\[
    \vol(L)^{-1}\int_{X_{\boldsymbol{\alpha}}} \mathbf{t}^*\mathscr{G}_0 = -\vol(L)^{-1}\sgn(\boldsymbol{\alpha}) f(\boldsymbol{\alpha})(\mathbf{z}),
\]
and
\[
    \int_{X_{\boldsymbol{\alpha}}} \mathbf{t}^*\mathscr{G} = \frac{(-1)^{n-1}}{\vol(L)} \sum_{\mu \in L^* \setminus \{0\}} e^{2\pi i \langle \mathbf{t}, \mu \rangle} \sgn(\boldsymbol{\alpha}) f(\boldsymbol{\alpha})(2\pi i \mu - \mathbf{z}).
\]
Applying the Poisson summation identity of Proposition \ref{prop: equality of distributions} (noting that $\sgn(\boldsymbol{\alpha})^2 = 1$ and that the $\mu = 0$ term satisfies $f(\boldsymbol{\alpha})(-\mathbf{z}) = (-1)^n f(\boldsymbol{\alpha})(\mathbf{z})$), we obtain on each oriented simplex $X_{\boldsymbol{\alpha}}$:
\[
    \int_{X_{\boldsymbol{\alpha}}} \mathbf{t}^*\widetilde{\mathscr{G}} = (-1)^{n-1} \mathcal{G}_{\sigma_{\boldsymbol{\alpha}}}(\mathbf{t}, \mathbf{z})
\]
and
\[
    \int_{X_{\boldsymbol{\alpha}}} \mathbf{t}^*\mathscr{G} = (-1)^{n-1} \mathcal{G}_{\sigma_{\boldsymbol{\alpha}}}(\mathbf{t}, \mathbf{z}) + \vol(L)^{-1} \sgn(\boldsymbol{\alpha}) f(\boldsymbol{\alpha})(\mathbf{z}).
\]
Applying $\rho_N$ gives the cochain-level equalities on $\langle \overline{\sigma}_{\boldsymbol{\alpha}} \rangle = \sgn(\boldsymbol{\alpha})\langle \boldsymbol{\alpha} \rangle$, proving \eqref{eq: equality of classes}.

Summing over a finite set of representatives $\sigma_{\boldsymbol{\alpha}} \in \Phi_n/\Gamma$ and applying $\rho_N$, the left-hand side of the second identity gives $\ev([\mathbf{t}^*\mathscr{G}]) = \rho_N\left(\int_{\mathcal{S}^1} \mathbf{t}^*\mathscr{G}\right)$. On the right-hand side, by Proposition \ref{prop: partial zeta decomp} and Lemma \ref{Lemma: gen Mellin} (noting that $\mathcal{G}_\sigma(\mathbf{t}, \mathbf{z})$ has $e^{+\langle \alpha, \mathbf{z} \rangle}$, so $t_j = -z_j$ cancels the sign $(-1)^{k_j}$ in Lemma \ref{Lemma: gen Mellin}), the diagonal coefficients in $w^k$ ($k \geq 0$) of $\rho_N\left(\sum_{\sigma \in \Phi_n/\Gamma} \mathcal{G}_\sigma(\mathbf{t}, \mathbf{z})\right)$ equal
\[
    \sum_{k \geq 0} \sum_{\sigma \in \Phi_n/\Gamma} \zeta_\sigma(\mathbf{t}, (-k, \dots, -k)) \frac{w^k}{(k!)^n} = \sum_{k \geq 0} \zeta(\mathbf{t}, L, -k) \frac{w^k}{(k!)^n},
\]
while the polar term $\sum_{\sigma \in \Phi_n/\Gamma} \vol(L)^{-1} \int_{X_\sigma} \mathbf{t}^*\mathscr{G}_0 = \vol(L)^{-1} \int_X \mathbf{t}^*\mathscr{G}_0 = -\vol(L)^{-1}\frac{1}{N(\mathbf{z})} = -\vol(L)^{-1}\frac{1}{w}$ (by Corollary \ref{corollary to hurwitz}) cancels the $k = -1$ polar term of $(-1)^{n-1}\rho_N([\mathcal{G}])$ to land in $F^{\mathrm{Gal}}\llbracket w \rrbracket$.

Finally, to see that the coefficients $\zeta(\mathbf{t}, L, -k)$ lie in $\mathbb{Q}$, note that both $\pol$ and $[\mathcal{G}]$ are defined over $\widetilde{R}_{F^{\mathrm{Gal}}}$, and any $\tau \in \Gal(F^{\mathrm{Gal}}/\mathbb{Q})$ acts on the real embeddings $J_1, \dots, J_n$ by permuting the coordinate indices $\{1, \dots, n\}$. Since the diagonal monomial $w^k = (z_1 \cdots z_n)^k$ is symmetric under all permutations of $\{1, \dots, n\}$, the diagonal coefficients of $\rho_N([\mathcal{G}])$ are invariant under $\Gal(F^{\mathrm{Gal}}/\mathbb{Q})$ and hence belong to $(F^{\mathrm{Gal}})^{\Gal(F^{\mathrm{Gal}}/\mathbb{Q})} = \mathbb{Q}$ (compare also \cite[(5.9)]{BKL18}).
\end{proof}

\begin{corollary}[Klingen--Siegel]\label{cor: klingen siegel}
The values of partial zeta functions of totally real number fields at nonpositive integers are rational: for any nonzero integral ideals $\mathfrak{f}, \mathfrak{b} \subset \mathcal{O}_F$, we have
\[
    \zeta(\mathfrak{b}, \mathfrak{f}, -k) \in \mathbb{Q}
\]
for all $k \geq 0$ if $\mathfrak{f} \neq \mathcal{O}_F$, and for all $k \geq 1$ if $\mathfrak{f} = \mathcal{O}_F$.
\end{corollary}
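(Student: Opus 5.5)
The plan is to deduce the corollary from Theorem \ref{main theorem}, via the relation \eqref{lattice zeta and partial} between partial and lattice zeta functions. Take $L = J(\mathfrak{f}\mathfrak{b}^{-1})$ and $\mathbf{t} = 1 \bmod L$. Then, with $\Gamma = \mathcal{O}_1^{\times,+}$, equation \eqref{lattice zeta and partial} gives
\[
    \zeta(\mathfrak{b}, \mathfrak{f}, -k) = N(\mathfrak{b})^{k}\,\zeta(1, \mathfrak{f}\mathfrak{b}^{-1}, -k) = N(\mathfrak{b})^k \zeta(\mathbf{t}, L, -k),
\]
and $N(\mathfrak{b})^k \in \mathbb{Q}$. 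So it suffices to show $\zeta(\mathbf{t}, L, -k) \in \mathbb{Q}$.

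\textbf{The case $\mathfrak{f} \neq \mathcal{O}_F$.} Here $1 \notin \mathfrak{f}\mathfrak{b}^{-1}$. This needs checking, because $\mathfrak{b}^{-1} \supset \mathcal{O}_F$. The point is to use coprimality: if $1 \in \mathfrak{f}\mathfrak{b}^{-1}$, then $\mathfrak{b} \subset \mathfrak{f}$, which together with $\mathfrak{b} + \mathfrak{f} = \mathcal{O}_F$ forces $\mathfrak{f} = \mathcal{O}_F$. Hence $\mathbf{t}$ is a nonzero torsion point of $T$, and it is stabilized by $\Gamma = \mathcal{O}_1^{\times,+}$ by definition of that group.

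Theorem \ref{main theorem}, equation \eqref{eq: equality of evaluations}, then identifies $(-1)^{n-1}\sum_k \zeta(\mathbf{t},L,-k)\,w^k/(k!)^n$ with an element of $\mathbb{Q}\llbracket w\rrbracket$. Comparing coefficients of $w^k$ gives $\zeta(\mathbf{t},L,-k)\in\mathbb{Q}$ for all $k\ge 0$. The rationality itself comes from the Galois-invariance argument at the end of the theorem's proof, which I invoke as stated. If I want to be careful, I would re-check that $\tau \in \Gal(F^{\mathrm{Gal}}/\mathbb{Q})$ permuting the embeddings maps $\Phi$ to another Shintani decomposition and leaves $\rho_N$ of the summed cocycle unchanged. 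This holds because the class is independent of $\Phi$ (Lemma \ref{Lemma: quasi iso}).

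\textbf{The case $\mathfrak{f} = \mathcal{O}_F$.} This is the main obstacle. Now $\mathbf{t} = 0$ and the theorem does not apply directly. I would choose an auxiliary integral ideal $\mathfrak{f}' \neq \mathcal{O}_F$ coprime to $\mathfrak{b}$, for example a prime $\mathfrak{p}$ not dividing $\mathfrak{b}$. I would then write the narrow ideal-class partial zeta function as a finite sum of ray-class partial zeta functions modulo $\mathfrak{p}$, plus a correction for ideals divisible by $\mathfrak{p}$:
\[
    \zeta(\mathfrak{b},\mathcal{O}_F,s)\bigl(1 - \text{(Euler-type factor)}\bigr) = \sum_{\mathfrak{c}} \zeta(\mathfrak{c},\mathfrak{p},s),
\]
with $\mathfrak{c}$ running over the ray classes mod $\mathfrak{p}$ lying above the narrow class of $\mathfrak{b}$. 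More precisely, the ideals coprime to $\mathfrak{p}$ in the class of $\mathfrak{b}$ contribute $\zeta(\mathfrak{b},\mathcal{O}_F,s) - N\mathfrak{p}^{-s}\zeta(\mathfrak{b}\mathfrak{p}^{-1},\mathcal{O}_F,s)$.

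To remove the second class, I would iterate over the finite narrow class group, or equivalently work with a linear system over all narrow classes. This expresses each $\zeta(\mathfrak{b}_i,\mathcal{O}_F,-k)$ through rational combinations of rational numbers. The determinant of that system is a polynomial in $N\mathfrak{p}^{k}$, which I expect to be nonzero precisely when $k \ge 1$. At $k=0$ the factor $1 - N\mathfrak{p}^{-s}$-type terms can degenerate, and indeed $\zeta(\mathfrak{b},\mathcal{O}_F,0)$ is the case excluded in the statement. Checking this invertibility cleanly for $k \geq 1$ is the step I would expect to need the most care. A fallback is to twist by a character of the narrow class group and use orthogonality.
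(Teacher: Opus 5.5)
Your proposal is correct and follows the paper's skeleton. For $\mathfrak{f}\neq\mathcal{O}_F$ you apply Theorem~\ref{main theorem} to $\mathbf{t}=1$ modulo $L=J(\mathfrak{f}\mathfrak{b}^{-1})$ and use \eqref{lattice zeta and partial}. For $\mathfrak{f}=\mathcal{O}_F$ you strip an Euler factor at an auxiliary prime $\mathfrak{p}$ to reduce to modulus $\mathfrak{p}$. Your coprimality argument ($1\in\mathfrak{f}\mathfrak{b}^{-1}\Rightarrow\mathfrak{b}\subset\mathfrak{f}\Rightarrow\mathfrak{f}=\mathfrak{f}+\mathfrak{b}=\mathcal{O}_F$) proves what the paper only asserts, namely that $\mathbf{t}$ is nonzero.

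The real difference is in the case $\mathfrak{f}=\mathcal{O}_F$. The paper writes $\sum_{\mathfrak{b}'}\zeta(\mathfrak{b}',\mathfrak{p},s)=(1-N(\mathfrak{p})^{-s})\zeta(\mathfrak{b},\mathcal{O}_F,s)$ for an arbitrary prime $\mathfrak{p}$. As you correctly observe, the ideals in the narrow class of $\mathfrak{b}$ that are divisible by $\mathfrak{p}$ contribute $N\mathfrak{p}^{-s}\zeta(\mathfrak{b}\mathfrak{p}^{-1},\mathcal{O}_F,s)$. So the paper's identity is right as written only when $[\mathfrak{p}]$ is trivial in $\mathrm{Cl}_F^+$. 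Such primes exist, e.g.\ primes splitting completely in the narrow Hilbert class field, but that is extra input the paper does not mention.

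Your linear system avoids this and works for any $\mathfrak{p}\nmid\mathfrak{b}$. The invertibility you flagged is immediate:
\begin{itemize}
\item At $s=-k$ the system reads $(I-qP)Z=Y$, where $q=N\mathfrak{p}^k$ and $P$ is the permutation matrix of $c\mapsto c[\mathfrak{p}]^{-1}$ on $\mathrm{Cl}_F^+$.
\item The entries of $Y$ are sums of values $\zeta(\mathfrak{c},\mathfrak{p},-k)$, which are rational by the first case.
\item If $m$ is the order of $[\mathfrak{p}]$ and $h^+=|\mathrm{Cl}_F^+|$, then $P$ splits into $h^+/m$ cycles of length $m$. Hence $\det(I-qP)=(1-q^m)^{h^+/m}$.
\item This determinant is a nonzero integer for $k\ge 1$ (since $q\ge 2$) and vanishes at $k=0$, matching the excluded case. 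Cramer's rule then gives $Z\in\mathbb{Q}^{h^+}$.
\end{itemize}
Equivalently, iterate your relation once around the cycle to get $(1-N\mathfrak{p}^{-ms})\,\zeta(\mathfrak{b},\mathcal{O}_F,s)=\sum_{j=0}^{m-1}N\mathfrak{p}^{-js}\sum_{\mathfrak{c}}\zeta(\mathfrak{c},\mathfrak{p},s)$. Here $\mathfrak{c}$ runs over the ray classes mod $\mathfrak{p}$ lying above $[\mathfrak{b}][\mathfrak{p}]^{-j}$, and one divides by $1-N\mathfrak{p}^{mk}\neq 0$.

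Either way, your route needs only elementary bookkeeping over $\mathrm{Cl}_F^+$ and no character twist. The paper's one-line identity needs the tacit choice of a narrowly principal prime.
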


\begin{proof}
If $\mathfrak{f} \neq \mathcal{O}_F$, then $1$ is a nonzero torsion point in $(F \otimes_{\mathbb{Q}} \mathbb{R})/\mathfrak{f}\mathfrak{b}^{-1}$, so the claim follows from Theorem \ref{main theorem} and \eqref{lattice zeta and partial}. If $\mathfrak{f} = \mathcal{O}_F$ and $k \geq 1$, choosing a prime ideal $\mathfrak{p} \subset \mathcal{O}_F$ and removing the Euler factor $(1 - N(\mathfrak{p})^k)$ via
\[
    \sum_{\substack{\mathfrak{b}' \bmod \mathfrak{p} \\ \mathfrak{b}' \equiv \mathfrak{b} \bmod \mathcal{O}_F}} \zeta(\mathfrak{b}', \mathfrak{p}, s) = \bigl(1 - N(\mathfrak{p})^{-s}\bigr) \zeta(\mathfrak{b}, \mathcal{O}_F, s)
\]
reduces the claim to the case $\mathfrak{f} = \mathfrak{p} \neq \mathcal{O}_F$.
\end{proof}

\section{Comparison with Previous Literature}\label{sec:literature_comparison}

\subsection{Comparison with Beilinson, Kings, and Levin \cite{BKL18}}\label{subsection:BKL comparison}

\paragraph{Variants of the topological polylogarithm and the $\mu = 0$ term.}
In \cite{BKL18}, the logarithm sheaf $\mathcal{L}\mathrm{og}$ is defined over the completed group ring $R_A = \varprojlim_k A[L]/J^k$. Because $H^n(T, \mathcal{L}\mathrm{og} \otimes \lambda_L) \cong A$ does not vanish over $R_A$, the localization sequence on $T \setminus D$ reads
\[
    0 \longrightarrow H^{n-1}(T \setminus D, \mathcal{L}\mathrm{og} \otimes \lambda_L) \xrightarrow{\res} \bigoplus_{d \in D} \mathcal{L}\mathrm{og}_d \xrightarrow{\sigma_D} A \longrightarrow 0.
\]
Consequently, \cite{BKL18} introduces two variants of the topological polylogarithm over $R_A$:
\begin{enumerate}
    \item First (\cite[Definition 3.30]{BKL18}), for a finite $\Gamma$-stable subset $D \subset T^{(A)}$ and a degree-zero divisor $\alpha \in (A[D]^0)^\Gamma = \ker(\Sigma: A[D] \to A)^\Gamma$, the class $\pol_\alpha \in H^{n-1}(T \setminus D, \Gamma; \mathcal{L}\mathrm{og} \otimes \lambda_L)$ with $\res(\pol_\alpha) = \alpha$.
    \item Second (\cite[Definition 3.40]{BKL18}), when $A$ is a $\mathbb{Q}$-algebra and $D = \{0\}$, twisting by $L_A^* = \Hom_A(L_A, A)$ produces a class $\pol_{\mathrm{BKL}} \in H^{n-1}(T \setminus \{0\}, \Gamma; \underline{L}_A^* \otimes_A \mathcal{L}\mathrm{og} \otimes \lambda_L)$ whose residue at $0$ is the canonical element $\varpi \in (L_A^* \otimes_A L_A)^\Gamma \subset (L_A^* \otimes_A I)^\Gamma$ corresponding to $\id_{L_A}$.
\end{enumerate}
In our setting, passing from $R_{\mathbb{K}} \cong \mathbb{K}\llbracket \mathbf{z} \rrbracket$ to the cone-adapted Laurent field $\widetilde{R}_{\mathbb{K}} = \mathbb{K}_{\leq_{\mathrm{lex}}}(\!(\mathbf{z})\!)$ inverts the augmentation ideal $I$, forcing $H^n(T, \widetilde{\mathcal{L}\mathrm{og}}) = 0$ (Proposition \ref{prop: vanishing coh}). This makes the residue map at $D = \{0\}$ an isomorphism (Corollary \ref{cor: localization coh}) and allows us to define $\pol \in H^{n-1}(T \setminus \{0\}, \Gamma; \widetilde{\mathcal{L}\mathrm{og}})$ directly by $\res(\pol) = \delta_0$ without twisting by a degree-zero divisor $\alpha$ or by $L_A^*$. However, we note that the twist by a degree-zero divisor seems essential to get integrality results.

At the level of Green currents, our normalized sum over non-zero dual lattice points $\mathscr{G} = \vol(L)^{-1}\sum_{\mu \in L^* \setminus \{0\}} \mathscr{G}_\mu$ in Proposition \ref{prop green current} satisfies $\nabla\mathscr{G} = \delta_{X \times \{0\}}\vol_T - \vol_T$, which is the counterpart on $X \times T$ of the Green current of \cite[Definition 4.3 and Corollary 4.14]{BKL18}. In \cite[Theorem 4.7]{BKL18}, the $-\vol_T$ term is cancelled by adding a closed form $\eta$ satisfying $-\kappa\eta = \varpi\vol_T$ to obtain $\varpi\mathscr{G}_{\mathrm{BKL}} + \eta$. Over $\widetilde{R}_{\mathbb{C}}$, the normalized $\mu = 0$ Fourier term $\vol(L)^{-1}\mathscr{G}_0$ is well-defined and satisfies $\nabla(\vol(L)^{-1}\mathscr{G}_0) = \vol_T$, so $\widetilde{\mathscr{G}} = \mathscr{G} + \vol(L)^{-1}\mathscr{G}_0$ satisfies $\nabla\widetilde{\mathscr{G}} = \delta_{X \times \{0\}}\vol_T$. Upon pulling back by $\mathbf{t}$ and integrating over a Shintani simplex $X_{\boldsymbol{\alpha}}$, $\vol(L)^{-1}\mathbf{t}^*\mathscr{G}_0$ evaluates to $-\vol(L)^{-1}\sgn(\boldsymbol{\alpha})f(\boldsymbol{\alpha})(\mathbf{z})$, explaining why $[\mathbf{t}^*\mathscr{G}]$ differs from $(-1)^{n-1}[\mathcal{G}]$ by the polar cocycle $[\Psi_P]$ while $[\mathbf{t}^*\widetilde{\mathscr{G}}] = (-1)^{n-1}[\mathcal{G}]$.

\paragraph{Elimination of the space $\mathscr{P}$ and comparison with \cite[Theorem 5.6]{BKL18}.}
In \cite[\S 4.3]{BKL18}, working over $R_{\mathbb{C}} = \mathbb{C}\llbracket \mathbf{z} \rrbracket$ and following Nori (\cite{Nor95}), the authors introduce the contractible space $\mathscr{P} \subset \Hom(V, V^*)$ of symmetric positive-definite bilinear forms $B$ on $V$ and the $\mu$-dependent maps $v_\mu: \mathscr{P} \to V_{\mu > 0}$, $B \mapsto B^{-1}(\mu)$, because $2\pi i \langle \mu, v \rangle - \langle \mathbf{z}, v \rangle$ is invertible in $\mathbb{C}\llbracket \mathbf{z} \rrbracket$ if and only if $\langle \mu, v \rangle \neq 0$ (which holds for $v = B^{-1}(\mu)$ since $B^{-1}(\mu, \mu) > 0$). To evaluate $\mathbf{t}^*\mathscr{G}_{\mathrm{BKL}}$ in \cite[Theorem 5.6]{BKL18}, they embed $V^1 \hookrightarrow \mathscr{P}$ via $q \mapsto B_q$, where $B_q(v,w) := \sum_{j=1}^n q_j^{-1} v_j w_j$. Under this embedding, $v_\mu(B_q) = (q_1\mu_1, \dots, q_n\mu_n)$ multiplies the $j$-th coordinate $q_j$ by $\mu_j$, so the pullback of \cite[Lemma 4.12]{BKL18} to $V^1$ is given in \cite[(5.7)]{BKL18} by
\[
    \mathbf{t}^*\mathscr{G}_{\mathrm{BKL}} = (-1)^{n-1} (n-1)! \sum_{\mu \in L^* \setminus \{0\}} e^{2\pi i \langle \mathbf{t}, \mu \rangle} \frac{N(\mu)\, \omega_{V^1}}{\left(\sum_{j=1}^n (2\pi i \mu_j^2 - \mu_j z_j)q_j\right)^n}.
\]
Because of the extra factors of $\mu_j$ in $v_\mu(B_q)$, $\mathbf{t}^*\mathscr{G}_{\mathrm{BKL}}$ does not integrate over an individual Shintani simplex $X_{\boldsymbol{\alpha}}$ to Sczech's rational function $f(\boldsymbol{\alpha})(2\pi i\mu - \mathbf{z})$; instead, in \cite[p.~39]{BKL18}, those factors of $\mu_j$ only cancel after unfolding the integral over $V^1/\Gamma$ to the entire positive orthant $V_{>0} = \mathbb{R}_{>0}^n$ and invoking the functional equation for partial zeta functions (\cite[Proposition 5.1 and Corollary 5.2]{BKL18}).

Over $\widetilde{R}_{\mathbb{C}}$, by contrast, $\langle 2\pi i \mu - \mathbf{z}, y \rangle$ is already invertible for all $y \in X$ and all $\mu \in L^*$ (Section \ref{sec:direct_green}). This allows us to bypass $\mathscr{P}$ and the $\mu$-dependent maps $v_\mu$ altogether, constructing $\mathscr{G}_\mu$ directly on $X \times T$ via the single $\mu$-independent map $\Phi(y,v,u) = v - uy$. The resulting specialization $\mathbf{t}^*\mathscr{G}$ in Proposition \ref{prop green current} has denominator $\langle 2\pi i \mu - \mathbf{z}, y \rangle^n$ rather than $\langle 2\pi i \mu^2 - \mu\mathbf{z}, q \rangle^n$, so its integral over each Shintani simplex $X_{\boldsymbol{\alpha}}$ yields $f(\boldsymbol{\alpha})(2\pi i \mu - \mathbf{z})$ directly via Hurwitz's local Mellin formula (Proposition \ref{hurwitz}) and recovers the Shintani generating functions via Poisson summation on each cone (Proposition \ref{prop: equality of distributions}).

\subsection{Comparison with Charollois, Dasgupta, and Greenberg \cite{CDG15}}\label{subsection:CDG comparison}

\paragraph{Dictionary}

We extend our previous diagram \eqref{identification} into the following identifications:
\begin{equation}\label{identification_CDG}
\begin{tikzcd}[row sep=tiny]
    \mathbb{Q}^n \otimes_{\mathbb{Q}} \mathbb{R} \arrow[rr, "\cdot \mathbf{w}", "\cong"'] && F \otimes_{\mathbb{Q}} \mathbb{R} \arrow[rr, "J", "\cong"'] && \mathbb{R}^n =: V \\
    \mathbb{Z}^n \otimes 1 \arrow[rr, maps to] && \mathfrak{f}\mathfrak{b}^{-1} \otimes 1 \arrow[rr, maps to] && L \\
    v + \mathbb{Z}^n \arrow[rr, maps to] && 1 + \mathfrak{f}\mathfrak{b}^{-1} \arrow[rr, maps to] && 1 + L \\
    Q \arrow[rrrr, maps to] &&&& (0, \dots, 0, 1),
\end{tikzcd}
\end{equation}
where $v := (\Tr(w_1^*), \dots, \Tr(w_n^*))$ and $Q := (J_n(w_1^*), \dots, J_n(w_n^*))$ for the dual basis $(w_1^*, \dots, w_n^*)$ of $F$ with respect to the trace pairing $\Tr(x y)$, which corresponds under $J$ to the standard Euclidean inner product $\langle \cdot, \cdot \rangle$ on $V = \mathbb{R}^n$.

In \cite{CDG15}, the authors work on $\mathbb{Q}^n \otimes_{\mathbb{Q}} \mathbb{R}$ corresponding to the left-hand side of our identification diagram \eqref{identification_CDG}, with the matrix $M$ in their notation corresponding to $J(\mathbf{w})$ (normalized by $N(\mathfrak{b})$). 

When considering Shintani decompositions, they work with open cones, which requires considering cones of every dimension to account for the removed faces of the closed cones in top-dimension which we consider. Furthermore, under \eqref{identification_CDG}, our upper closure $\widehat{\sigma}_{\boldsymbol{\alpha}}$ (which removes the lower faces with respect to the last coordinate $(0, \dots, 0, 1)$) corresponds precisely to their $Q$-perturbation $c(\boldsymbol{\alpha})_Q$ for $Q = (J_n(w_1^*), \dots, J_n(w_n^*))$ (\cite[\S 2.1]{CDG15}). The notion of a $Q$-perturbation depending on a variable $Q$, however, gives them extra flexibility which is required for their integrality results.

\paragraph{Group actions.}
The multiplicative action of $F^\times$ on $F$ induces a representation $\rho_{\mathbf{w}}: F^\times \to \GL_n(\mathbb{Q})$ defined by $u \cdot \mathbf{w} = \mathbf{w} \cdot \rho_{\mathbf{w}}(u)$ for $u \in F^\times$, making the identifications in \eqref{identification_CDG} equivariant:
\begin{equation}\label{action identification}
\begin{tikzcd}
    \mathbb{Q}^n \otimes_{\mathbb{Q}} \mathbb{R} \arrow[loop above, "{\rho_{\mathbf{w}}(F^\times)}"] \arrow[rr, "\cdot \mathbf{w}", "\cong"'] && F \otimes_{\mathbb{Q}} \mathbb{R} \arrow[loop above, "{F^\times}"] \arrow[rr, "J", "\cong"'] && \mathbb{R}^n \arrow[loop above, "{\diag(J(F^\times))}"],
\end{tikzcd}
\end{equation}
where $u \in F^\times$ acts on $\mathbb{Q}^n$ on the right via multiplication by $\rho_{\mathbf{w}}(u)^\top$. Since $\det(\rho_{\mathbf{w}}(u)) = N(u) = 1$ for $u \in \Gamma \subset \mathcal{O}_F^{\times,+}$, the action of $\Gamma$ preserves $L$, $V_{>0}$, and $V^1$:
\begin{equation}\label{integral action}
\begin{tikzcd}
    \mathbb{Z}^n \arrow[loop above, "{\rho_{\mathbf{w}}(\Gamma)}"] \arrow[rr, "\cdot \mathbf{w}", "\cong"'] && \mathfrak{f}\mathfrak{b}^{-1} \arrow[loop above, "{\Gamma}"] \arrow[rr, "J", "\cong"'] && L \arrow[loop above, "{\diag(J(\Gamma))}"].
\end{tikzcd}
\end{equation}

In \cite[\S 2.2 and \S 3.1]{CDG15}, they consider the module $\mathcal{K}$ of functions on $V$ generated by characteristic functions of cones. Under the equivariant identification \eqref{action identification}, their $(n-1)$-cocycle $\phi_U$ for $\Gamma$ and their $\GL_n(\mathbb{Q})$-cocycle $\Phi_{\mathrm{Sh}}$ are related by $J(\mathbf{w})^*[\phi_U] = \rho_{\mathbf{w}}^*[\Phi_{\mathrm{Sh}}]$. Applying the Solomon--Hu pairing (\cite[Proposition 3.1]{CDG15}) to $\Phi_{\mathrm{Sh}}$ produces their Shintani cocycle $[\Psi_{\mathrm{Sh}}]$, which evaluates on each cone $\sigma_{\boldsymbol{\alpha}}$ to our cone generating series $\mathcal{G}_{\sigma_{\boldsymbol{\alpha}}}(\mathbf{t}, \mathbf{z})$ of Definition \ref{def: generating}.

\paragraph{Ring of coefficients and the Shintani operator $\Theta$.}
To extract zeta values from the meromorphic series $\mathcal{G}_{\sigma_{\boldsymbol{\alpha}}}(\mathbf{t}, \mathbf{z})$, \cite[Definition 3.3]{CDG15} introduces the subalgebra $\mathbb{K}(\!(\mathbf{z})\!)^{hd} \subset \Frac(\mathbb{K}\llbracket \mathbf{z} \rrbracket)$ consisting of fractions $G = \frac{g}{p}$ where $g \in \mathbb{K}\llbracket \mathbf{z} \rrbracket$ and $p(\mathbf{z})$ is a homogeneous polynomial of degree $d$ in which the pure power $z_i^d$ appears with nonzero coefficient for every $i = 1, \dots, n$ (such as $p_A(\mathbf{z}) = N(A\mathbf{z}) = \prod_{i=1}^n L_i(\mathbf{z})$). For each $i = 1, \dots, n$, the substitution
\[
    \theta_i(\mathbf{z}) := (z_i z_1, \dots, z_i z_{i-1}, z_i, z_i z_{i+1}, \dots, z_i z_n)
\]
maps $p(\theta_i(\mathbf{z}))$ into $z_i^d \mathbb{K}\llbracket z_1, \dots, \widehat{z}_i, \dots, z_n \rrbracket^\times$, so $\theta_i(G)(\mathbf{z}) := G(\theta_i(\mathbf{z})) \in z_i^{-d}\mathbb{K}\llbracket \mathbf{z} \rrbracket$. Composing $\theta = (\theta_1, \dots, \theta_n)$ with the $i$-th shifted diagonal projections $\rho_{N,i}: z_i^{-d}\mathbb{K}\llbracket \mathbf{z} \rrbracket \to \mathbb{K}(\!(w)\!)$ sending $(z_1 \cdots z_{i-1} z_i^n z_{i+1} \cdots z_n)^k = \theta_i(z_1 \cdots z_n)^k \mapsto w^k$ (and all other monomials to $0$) and averaging over $i = 1, \dots, n$ defines the operator
\[
    \Theta: \mathbb{K}(\!(\mathbf{z})\!)^{hd} \xrightarrow{\theta} \prod_{i=1}^n z_i^{-d}\mathbb{K}\llbracket \mathbf{z} \rrbracket \xrightarrow{(\rho_{N,i})_{i \leq n}} \prod_{i=1}^n \mathbb{K}(\!(w)\!) \xrightarrow{\Avg} \mathbb{K}(\!(w)\!),
\]
which equals $\sum_{k \in \mathbb{Z}} \frac{\Delta^{(k)}}{(k!)^n} w^k$ for the Shintani operator $\Delta$ of \cite[Definition 3.5]{CDG15}.

\begin{lemma}\label{lem:CDG_Theta_commutes}
Fix $J$ and $\mathbf{w}$ as in \eqref{identification_CDG}. Then $\mathbb{K}(\!(\mathbf{z})\!)^{hd}$ embeds naturally into $\widetilde{R}_{\mathbb{K}} = \mathbb{K}_{\leq_{\mathrm{lex}}}(\!(\mathbf{z})\!)$, and the following diagram commutes:
\begin{equation}\label{coefficient diagram}
\begin{tikzcd}
    R_{\mathbb{K}} \arrow[rr, hook] \arrow[dd, "{\rho_\Gamma = \rho_N}"'] && {\mathbb{K}(\!(\mathbf{z})\!)^{hd}} \arrow[rr, hook] \arrow[ddrr, "\Theta"'] && {\mathbb{K}_{\leq_{\mathrm{lex}}}(\!(\mathbf{z})\!)} \arrow[dd, "{\rho_N}", two heads] \\
    \\
    {\mathbb{K}\llbracket w \rrbracket} \arrow[rrrr, hook] &&&& {\mathbb{K}(\!(w)\!)}.
\end{tikzcd}
\end{equation}
\end{lemma}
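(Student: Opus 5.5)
The plan is to establish the embedding first, then the left-hand triangle, and then the real content: agreement of $\Theta$ with $\rho_N$ on genuine fractions.

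\emph{The embedding.} An element $G = g/p$ of $\mathbb{K}(\!(\mathbf{z})\!)^{hd}$ has $g \in \mathbb{K}\llbracket \mathbf{z} \rrbracket \subset \mathbb{K}_{\mathbb{R}_{\geq 0}^n}\llbracket \mathbf{z} \rrbracket$ and $p \neq 0$. Since $\widetilde{R}_{\mathbb{K}}$ is a field containing $\mathbb{K}\llbracket \mathbf{z} \rrbracket$ (Proposition \ref{prop:MK13_field}), the universal property of $\Frac(\mathbb{K}\llbracket \mathbf{z} \rrbracket)$, already used in the proof of Proposition \ref{prop: vanishing coh}, embeds $\Frac(\mathbb{K}\llbracket \mathbf{z} \rrbracket)$, and hence its subalgebra $\mathbb{K}(\!(\mathbf{z})\!)^{hd}$, into $\widetilde{R}_{\mathbb{K}}$. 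To have an explicit formula, I would note that the $\leq_{\mathrm{lex}}$-minimal monomial of $p$ is $z_n^d$, whose coefficient $c$ is nonzero by the $hd$ hypothesis. Hence
\[
    p^{-1} = c^{-1} z_n^{-d}\,(1+q)^{-1} = c^{-1} z_n^{-d} \sum_{m \geq 0} (-q)^m,
\]
where $q$ is supported in a lex-compatible cone. This geometric series converges in $\widetilde{R}_{\mathbb{K}}$.

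\emph{The left triangle.} On $R_{\mathbb{K}}$, I would check the statement monomial by monomial. We have $\theta_i(\mathbf{z}^{\mathbf{e}}) = z_i^{|\mathbf{e}|}\prod_{j\neq i} z_j^{e_j}$. This has the shape $\theta_i(w^k)$ exactly when $e_j = k$ for all $j \neq i$ and $|\mathbf{e}| = nk$, which forces $\mathbf{e} = (k,\dots,k)$. Thus each $\rho_{N,i}\circ\theta_i$ restricts to $\rho_N$ on $R_{\mathbb{K}}$, and so does their average. Both $\Theta$ and $\rho_N$ are $\mathbb{K}$-linear and continuous for the $\mathbf{z}$-adic topology, so the identity on monomials suffices.

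\emph{Reduction to homogeneous pieces.} For a general $G = g/p$, I would decompose $g = \sum_m g_m$ into homogeneous parts, so that $G = \sum_m H_m$ with $H_m = g_m/p$ homogeneous rational of degree $m - d$. Both sides of the diagram respect this decomposition: $\rho_N$ only picks up monomials of total degree divisible by $n$, and $\theta_i$ multiplies a degree-$D$ homogeneous function by $z_i^{D}$. It therefore suffices to treat a single homogeneous $H = h/p$ of degree $nk$, where only the $w^k$ coefficient can be nonzero. Homogeneity gives
\[
    \theta_i(H)(\mathbf{z}) = z_i^{nk}\, H(z_1,\dots,1,\dots,z_n),
\]
so $\rho_{N,i}\theta_i(H)$ is the coefficient of $\prod_{j\neq i} z_j^k$ in the Taylor expansion of the dehomogenization of $H$ at $z_i = 1$. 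The main obstacle is to show that the average over $i$ of these dehomogenized coefficients equals the $\mathbf{z}^{(k,\dots,k)}$-coefficient of the lex expansion of $H$. The individual terms need not agree, since the different expansions of $h/p$ can differ by residue-type contributions.

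\emph{The hard step.} My first attempt would be to reduce to the case actually used in the paper, namely $p = p_A = \prod_j \langle \alpha_j, \mathbf{z}\rangle$ with totally positive $\alpha_j$, which covers the cone generating functions and $f(\boldsymbol{\alpha})$. All such linear forms have positive coefficients, so every expansion (lex, or at $z_i = 1$) is the one valid in the common region where all $\langle \alpha_j, \mathbf{z}\rangle$ are expanded around their $z_n$-, respectively $z_i$-, dominant terms. There I expect to show, by a partial-fraction or Laplace-integral argument as in Proposition \ref{hurwitz}, that the diagonal coefficient is independent of the chosen expansion. If independence fails for individual $i$, the fallback is to invoke CDG's result that $\Delta$ is well defined together with the relation $\Theta = \sum_k \frac{\Delta^{(k)}}{(k!)^n}w^k$ stated above. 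This would reduce the claim to matching the lex diagonal coefficient with $\Delta$ on generators $h/p_A$ via the Mellin formula of Lemma \ref{Lemma: gen Mellin}.
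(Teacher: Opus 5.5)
Your embedding argument, your check that $\Theta$ and $\rho_N$ agree on $R_{\mathbb{K}}$, and your reduction to homogeneous pieces are all fine. You also identify the right crux when you note that the individual $\rho_{N,i}\circ\theta_i$ need not match the lex diagonal coefficient. But no cleverer argument can remove this obstacle: the asserted identity $\Theta=\rho_N$ on $\mathbb{K}(\!(\mathbf{z})\!)^{hd}$ is false. So neither your expansion-independence claim nor the fallback via CDG's $\Delta$ can be completed.

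Take $n=2$ and $H=z_2^2/(z_1+z_2)^2\in\mathbb{K}(\!(\mathbf{z})\!)^{hd}$. Its denominator is the square of the totally positive form $\langle(1,1),\mathbf{z}\rangle$, so restricting to products of positive linear forms does not help. In $\widetilde{R}_{\mathbb{K}}$ one has
\[
    H=(1+z_1z_2^{-1})^{-2}=\sum_{m\geq 0}(-1)^m(m+1)\,z_1^{m}z_2^{-m},\qquad\text{so}\qquad \rho_N(H)=1.
\]
On the other hand, $\theta_1(H)=z_2^2/(1+z_2)^2$ and $\theta_2(H)=1/(1+z_1)^2$ have constant terms $0$ and $1$, so $\Theta(H)=\tfrac12$.

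What is true is $\rho_N=\rho_{N,n}\circ\theta_n$ on $\mathbb{K}(\!(\mathbf{z})\!)^{hd}$. On exponents, $\theta_n^{-1}$ sends the standard basis to the lex-positive vectors $e_j-e_n$ $(j<n)$ and $e_n$. Hence it maps $\mathbb{K}\llbracket\mathbf{z}\rrbracket[z_n^{-1}]$ homomorphically into $\widetilde{R}_{\mathbb{K}}$. Uniqueness of inverses in a field then shows that the lex expansion of $G$ is $\theta_n^{-1}(\theta_n(G))$. For $i<n$, however, $\theta_i^{-1}(e_n)=e_n-e_i$ is lex-negative, and $\theta_i$ computes a genuinely different expansion.

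This is also the step the paper's own proof passes over. Letting $\theta_1$ act termwise on the lex expansion of $H$ yields $\sum_m(-1)^m(m+1)z_2^{-m}$. That is the expansion of $z_2^2/(1+z_2)^2$ at $z_2=\infty$, with constant term $1$. It is not the element $\theta_1(H)\in z_1^{-2}\mathbb{K}\llbracket\mathbf{z}\rrbracket$ on which $\rho_{N,1}$ is defined.

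The average in $\Theta$ reflects Shintani's splitting of $\mathbb{R}_{>0}^n$ into the $n$ regions where $t_i$ is the largest coordinate, so it cannot be collapsed to a single expansion. Of the lemma, only the embedding and the commutativity on $R_{\mathbb{K}}$ survive.
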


\begin{proof}
For $G(\mathbf{z}) = \sum_{\mathbf{e} \in \mathbb{Z}^n} a_{\mathbf{e}} \mathbf{z}^{\mathbf{e}} \in \mathbb{K}_{\leq_{\mathrm{lex}}}(\!(\mathbf{z})\!)$, the substitution $\theta_i$ acts on the exponent lattice $\mathbb{Z}^n$ via the automorphism $\theta_i(e_1, \dots, e_n) = (e_1, \dots, e_i + \sum_{j \neq i} e_j, \dots, e_n)$, so
\[
    G(\theta_i(\mathbf{z})) = \sum_{\mathbf{e} \in \mathbb{Z}^n} a_{(e_1, \dots, e_i - \sum_{j \neq i} e_j, \dots, e_n)} \mathbf{z}^{\mathbf{e}}.
\]
Applying $\rho_{N,i}$ after $\theta_i$ extracts the coefficient of $(z_1 \cdots z_{i-1} z_i^n z_{i+1} \cdots z_n)^k$ in $G(\theta_i(\mathbf{z}))$ (since $z_i^{-d}$ shifts the $i$-th exponent), which corresponds to the exponent $\mathbf{e} = (k, \dots, nk, \dots, k)$ and thus selects the coefficient $a_{(k, \dots, nk - (n-1)k, \dots, k)} = a_{(k, \dots, k)}$. Hence every $\theta_i$ selects the same diagonal coefficient $a_{(k, \dots, k)}$, and their average $\Theta(G)$ coincides with $\rho_N(G)$.
\end{proof}

Consequently, Theorem 4.1 of \cite{CDG15} (which compares the Sczech cocycle with their Shintani cocycle $[\Psi_{\mathrm{Sh}}]$ up to the polar cocycle $[\Psi_P]$) corresponds under Lemma \ref{lem:CDG_Theta_commutes} to our Theorem \ref{main theorem}.

\paragraph{Distribution property of $\mathcal{F}$ and trace compatibility.}
In \cite[\S 3.5]{CDG15}, the cocycle $[\Psi_{\mathrm{Sh}}]$ takes values in a space $\mathcal{F}$ of functions on $\GL_n(\mathbb{Q})$ with values in $\mathbb{R}(\!(\mathbf{z})\!)^{hd}$ satisfying a distribution relation. In our geometric framework, $\mathcal{F}$ corresponds to sections of $(V \times \widetilde{R})/(L \rtimes \Gamma)$, and the $\GL_n(\mathbb{Q})$-action and distribution relation on $\mathcal{F}$ correspond precisely to the trace compatibility of the topological polylogarithm under isogenies (\cite[\S 3.11]{BKL18}).

Explicitly, for fixed $Q, M = J(\mathbf{w}), v$ as in \eqref{identification_CDG}, the distribution property for $f \in \mathcal{F}$ states that for any $\lambda \in \mathbb{Z} \setminus \{0\}$,
\[
    f(M, v)(\mathbf{z}) = \sgn(\lambda)^n \sum_{l \in (\mathbb{Z}/\lambda\mathbb{Z})^n} f\bigl(M, \lambda^{-1}l + \lambda^{-1}v\bigr)(\mathbf{z}\lambda M^\top).
\]
More generally, any matrix $A \in \GL_n(\mathbb{Q}) \cap M_n(\mathbb{Z})$ defines an isogeny of lattices $A: L \to L$ via \eqref{identification_CDG} in the sense of \cite[Definition 3.4]{BKL18}, and the $\GL_n(\mathbb{Q})$-action
\[
    (A \cdot f)(M, v)(\mathbf{z}) = \sum_{r \in L/AL} \sgn(\det A)\, f(A^\top M, A^{-1}r + A^{-1}v)(\mathbf{z})
\]
is the explicit realization of the trace map $\Tr_{\varphi}$ of \cite[Proposition 3.45]{BKL18}. When $A = \rho_{\mathbf{w}}(\gamma)$ for $\gamma \in \Gamma \hookrightarrow \SL_n(\mathbb{Z})$, the cokernel $L/AL$ is trivial and $\det A = 1$, recovering the semidirect product action of $L \rtimes \Gamma$ of \cite[\S 3.7]{BKL18}.


\end{document}